\newcommand{\mZ}{\mathbb{Z}}
\newcommand{\mQ}{\mathbb{Q}}
\newcommand{\mP}{\mathbb{P}}
\DeclareSymbolFont{cyrletters}{OT2}{wncyr}{m}{n}
\DeclareMathSymbol{\Sha}{\mathalpha}{cyrletters}{"58}
\DeclareMathSymbol{\Sha}{\mathalpha}{cyrletters}{"58}
\newcommand{\brk}[1]{ \left\lbrace #1 \right\rbrace }
\newcommand{\zZ}[1]{ \mathbb{Z}/#1\mathbb{Z}}
\newcommand{\iso}{\simeq}
\numberwithin{equation}{subsection}
\newtheorem{thmx}{Theorem}
\newtheorem{propx}[thmx]{Proposition}
\numberwithin{equation}{subsection}
\newtheorem{lemma}[subsection]{Lemma}
\newtheorem{conjecture}[subsection]{Conjecture}
\newtheorem{prop}[subsection]{Proposition}
\newtheorem{proposition}[subsection]{Proposition}
\theoremstyle{definition}
\newtheorem{defn}[subsection]{Definition}
\newtheorem{example}[subsection]{Example}
\theoremstyle{remark}
\newtheorem{remark}[subsection]{Remark}
\numberwithin{equation}{section} \numberwithin{figure}{section}
\DeclareMathOperator{\Gal}{Gal} 
\DeclareMathOperator{\Aut}{Aut} \DeclareMathOperator{\Spec}{Spec}
\DeclareMathOperator{\lcm}{lcm}
\DeclareMathOperator{\GL}{GL}
\DeclareMathOperator{\SL}{SL}
\newcommand{\Qbar}{\overline{\QQ}}
\newcommand{\cdef}[1]{\textsf{\textbf{#1}}}
\newcommand\ZZ{\mathbb{Z}}
\newcommand\Z{\mathbb{Z}}
\newcommand\QQ{\mathbb{Q}}
\renewcommand{\leq}{\leqslant}
\renewcommand{\geq}{\geqslant}
\newcommand{\Q}{\mathbb Q}
\newcommand{\arrow}{\longrightarrow}
\renewcommand{\Im}{{\rm Im}\,}
\begin{document}
\title{A group theoretic perspective on entanglements of division fields}

\author{Harris B. Daniels} 
\address{Harris B. Daniels \\
	Department of Mathematics \\ 
	Amherst College \\
	Amherst, MA 01002
    USA}
\email{hdaniels@amherst.edu}

\author{Jackson S. Morrow} 
\address{Jackson S. Morrow \\
	Centre de Recherche de Math\'ematiques, Universit\'e de Montr\'eal\\
    Montr\'eal, Qu\'ebec H3T 1J4\\
    CAN}
\email{jmorrow4692@gmail.com}

\subjclass
{11G05 
	(11F80,  
	 14H10
	 )}  

\keywords{Elliptic curves, Division fields, Entanglement, Modular curves}
\date{\today}

\begin{abstract}
In this paper, we initiate a systematic study of entanglements of division fields from a group theoretic perspective. 
For a positive integer $n$ and a subgroup $G\subseteq \GL_2(\zZ{n})$ with surjective determinant, we provide a definition for $G$ to represent an $(a,b)$-entanglement and give additional criteria for $G$ to represent an explained or unexplained $(a,b)$-entanglement.

Using these new definitions, we determine the tuples $((p,q),T)$, with $p<q\in\ZZ$ distinct primes and $T$ a finite group, such that there are infinitely many non-$\overline{\mQ}$-isomorphic elliptic curves over $\QQ$ with an unexplained $(p,q)$-entanglement of type $T$. 
Furthermore, for each possible combination of entanglement level $(p,q)$ and type $T$, we completely classify the elliptic curves defined over $\QQ$ with that combination by constructing the corresponding modular curve and $j$-map.
\end{abstract}
\maketitle

\section{\bf Introduction}
\label{sec:Intro}

Given an elliptic curve $E/\QQ$ and fixing an algebraic closure of $\QQ$, call it $\Qbar$, one can construct a Galois representation associated to $E$
\[
\rho_E\colon \Gal(\Qbar/\QQ)\to\GL_2(\widehat{\ZZ})\simeq \prod_{\ell{ \rm\ prime}}\GL_2(\ZZ_{\ell}).
\]
The image of this representation (which is only defined up to conjugation) encodes information about the the fields of definition of the torsion points on $E$. 
In \cite{serre:OpenImageThm}, Serre showed that if $E$ is non-CM, then $d_E := [\GL_2(\widehat{\ZZ}) :\Im(\rho_E)]\geq 2$, and work of Duke \cite{Duke} and Jones \cite{JonesSerreCurves} proved that for almost all elliptic curves $E/\QQ$ (in the sense of density), $d_E=2$. 
In light of these results, it is natural to wonder:~what conditions make the index $d_E$ greater than 2?

Using the above isomorphism, we can see that there are two (seemingly) orthogonal ways the image of $\rho_E$ could be smaller than expected:
\begin{enumerate}
\item The image of $\rho_E$ composed with projection onto $\GL_2(\ZZ_{\ell})$ might be smaller than expected (i.e., the $\ell$-adic Galois representation associated to $E$ might not be surjective).
\item Relatively prime division fields with non-trivial intersection force the image of $\rho_E$ to not be the {\it full} direct product of the images of the $\ell$-adic representations.
\end{enumerate}
Recently, there has been great progress in understanding and classifying the ways in which an elliptic curve over $\QQ$ can fail to have surjective $\ell$-adic Galois representation; see for example \cite{rouse2014elliptic, sutherlandzywina2017primepower, RouseSZB:elladic}. 
On the other hand, there has been recent progress \cite{morrow2017composite, DanielsGJ, morrowLowComposite} in determining what composite images can occur. 
However, there has yet to be a systematic study of the way in which the division fields of elliptic curves can intersect.

\subsection*{Main contributions}
The goal of this article is to lay the foundation for the study of entanglements of division fields of elliptic curves using group theoretic methods. 
To date, there has been limited study of the way in which the division fields can have non-trivial intersection; see for example \cite{brau3in2, morrow2017composite, danielsLR:coincidences, JonesM:Nonabelianentanglements}. 
To initiate our discussion, we give a formal definition, which captures when division fields have non-trivial and unexpected intersection.

\begin{defn}\label{defn:firstentanglement}
Let $E/\QQ$ be an elliptic curve and let $a,b$ be positive integers. We say that $E$ has an \cdef{$(a,b)$-entanglement} if 
\[
K = \QQ(E[a]) \cap \QQ(E[b])\neq \QQ(E[d]),
\] 
where $d=\gcd(a,b)$. 
The \cdef{type} $T$ of the entanglement is the isomorphism class of $\Gal(K/\QQ(E[d]))$.
\end{defn}

These entanglements come in two flavors. 
To see this distinction, we explain the observation that Serre made in \cite{serre:OpenImageThm} to prove that $d_E\geq 2$ for all $E/\QQ$. 
Fix an arbitrary elliptic curve $E/\QQ$ such that its minimal discriminant $\Delta_E$ is not a square. 
It is a classical result that $\Q(\sqrt{\Delta_E})\subseteq \Q(E[2])$ (see \cite{Adelmann:Decomp} for example). Further, from the Kronecker--Weber theorem, there is some $m>2$ such that $\Q(\sqrt{\Delta_E})\subseteq \Q(\zeta_m)$, so that $\Q(E[2])\cap \Q(\zeta_m)$ is a non-trivial quadratic extension of $\Q$, and one of the {many} consequences of the Weil pairing is that $\QQ(\zeta_m)\subseteq\QQ(E[m])$ ensuring that, in this case $\Q(E[2])\cap \Q(E[m])$ is non-trivial. 
This nontrivial intersection of division fields is what causes the index $d_E$ to be at least $2$.
We note that if the minimum $m$ is odd, then we have a $(2,m)$-entanglement of type $\zZ{2}$, and when $m$ is even, one finds a $(4,m/4)$-entanglement of type $\zZ{2}$ coming from $\mQ(\sqrt{-\Delta_E}) \subset \mQ(E[4]) \cap \mQ(E[m/4])$ or a $(8,m/8)$-entanglement of type $\zZ{2}$ coming from $\mQ(\sqrt{\pm 2\Delta_E}) \subset \mQ(E[8]) \cap \mQ(E[m/8])$. 
We say that such an entanglement is \textit{explained} because its existence can be explained through the Weil pairing (i.e., that $\QQ(\zeta_n) \subseteq \QQ(E[n])$ for all $E/\QQ$) and the Kronecker--Weber theorem (every abelian extension of $\QQ$ is in $\QQ(\zeta_n)$ for some $n$).

In this work, we will primarily be concerned with \textit{unexplained} entanglements. Below, we provide three examples of this flavor of entanglement. 

\begin{example}\label{exam:firstunexplainedentanglement}
Let $E/\mQ$ be the elliptic curve with Cremona label \href{https://www.lmfdb.org/EllipticCurve/Q/100a3/}{\texttt{100a3}} which has Weierstrass model
\[
E\colon y^2 = x^3 - x^2 - 1033x - 12438.
\]
We claim that $E$ has an entanglement between its 2- and 3-division fields that is not explained by the Weil pairing and the Kronecker--Weber theorem, in particular we have that $\mQ(E[2]) \cap \mQ(E[3]) = \mQ(\sqrt{5})$. 
Indeed, this elliptic curve has a rational 3-isogeny $\varphi\colon E \to E'$, and we compute that the kernel of this isogeny is generated by a point $P$ of order 3 with field of definition $\mQ(P) = \mQ(\sqrt{-15})$. 
We also determine that the 2-division field $\mQ(E[2])$ is isomorphic to $\mQ(\sqrt{5})$, and since $\mQ(\sqrt{-3}) $ is contained in the 3-division field $ \mQ(E[3])$ by the Weil pairing, we know that $\mQ(E[2]) \cap \mQ(E[3]) \supseteq \mQ(\sqrt{5})$ and in fact  $\mQ(E[2]) \cap \mQ(E[3]) = \mQ(\sqrt{5})$. Clearly, this entanglement is not explained by the Weil pairing and the Kronecker--Weber theorem since the Weil pairing only ensures that $\sqrt{-3}\in\QQ(E[3])$ and does not tell us anything about $\sqrt{5}$ being in $\QQ(E[2])$ or $\Q(E[3])$. 
\end{example}

\begin{example}\label{exam:dual}
Let $E/\mQ$ be the elliptic curve with Cremona label \href{https://www.lmfdb.org/EllipticCurve/Q/4225f2/}{\texttt{4225f2}} which has Weierstrass model
\[
E\colon y^2 + y = x^3 - x^2 - 2708x + 54693.
\]
We start with a few observations about $E$. First, the discriminant $\Delta_E$ of $E$ is equivalent to 13 modulo rational squares, and thus $\QQ(\sqrt{13})\subseteq\QQ(E[2])$. Secondly, $E$ has a rational $5$-isogeny, call it $\varphi$. Letting $P$ be a generator of the cyclic kernel of $\varphi$, we see using \texttt{Magma} that $\QQ(x(P)) = \QQ(\sqrt{65})\subseteq\QQ(E[5])$. From the Weil-pairing, we know that $\QQ(\sqrt{5})\subseteq\QQ(\zeta_5)\subseteq\QQ(E[5])$ and so again we have a quadratic intersection between two relatively prime division fields.

Let $E'/\QQ$ be the codomain of $\varphi$, let $\widehat{\varphi}\colon E'\to E$ be the dual isogeny of $\varphi$, and let $Q$ be a generator for the kernel of $\widehat{\varphi}$. Using \texttt{Magma}, we again see that $\Delta_{E'} \equiv 13 \bmod (\QQ^\times)^2$ and that $\QQ(x(Q)) = \QQ(\sqrt{13})$, and hence $E'$ has a quadratic entanglement between its 2- and 5-division fields; however, this time the entanglement occurs inside the kernel of $\widehat{\varphi}$. 
The point is that since $\QQ(E[5]) = \QQ(E'[5])$, it is sensible to say that $E$ has a $(2,5)$-entanglement contained inside the kernel of the dual of its isogeny; in particular, the isogenous curve $E'$ has a $(2,5)$-entanglement contained in the kernel of its isogeny. Again, these entanglements cannot be explained by the Weil pairing and/or the Kronecker--Weber theorem.
\end{example}

\begin{example}
Let $E/\mQ$ be the elliptic curve with Cremona label \href{https://www.lmfdb.org/EllipticCurve/Q/5780c1/}{\texttt{5780c1}} which has Weierstrass model
\[
E\colon y^2 = x^3 - 272x - 1564.
\]
This elliptic curve has $\Im(\rho_{E,5})$ contained in the exceptional group $G_{S_4}(5)$, which is a maximal subgroup of $\GL_2(\ZZ/5\ZZ)$. The 5-division polynomial $f_5(x)$ of $E$ is an irreducible polynomial of degree 12, and if we let $L$ be the splitting field of $f_5(x)$, then there is a unique degree 6 Galois extension $K/\QQ$ such that $K\subseteq L$. A quick check in \texttt{Magma} shows that $E[2]\subseteq E(K)$ and since 2 is not exceptional for $E$, we know that $K = \QQ(E[2])$. Thus, there is a non-trivial intersection between the 2- and 5-division  fields of $E$ and if we let $F = \QQ(E[2])\cap\QQ(E[5])$, then $\Gal(F/\QQ)\simeq S_3$.
Since $F$ is not an abelian extension, it is not possible for the entanglement to be \textit{completely} explained by the Weil pairing and the Kronecker--Weber theorem, but looking closer we see that the unique quadratic extension of $\QQ$ contained in $F$ is actually $\QQ(\sqrt{\Delta_E}) \simeq \QQ(\sqrt{5})$. Therefore, this part of the entanglement is explained by the Weil pairing and/or the Kronecker--Weber theorem, but the rest of the entanglement remains unexplained.
\end{example}

\subsection*{Statement of results}
As mentioned above, the goal of our work is to establish a group theoretical foundation to studying entanglements. 
The idea behind this approach is to translate the property of an elliptic curve $E/\QQ$ having an $(a,b)$-entanglement into group theoretic conditions on the image of the mod $n$ Galois representation associated to $E$ where $n=\lcm(a,b)$.
Once this is done, for relevant $n$, we can search for certain subgroups of $\GL_2(\ZZ/n\ZZ)$ that \emph{represent} entanglements (see Definition \ref{def:representsentanglement}) and then study the associated modular curve. 
This process allows us to reduce the question of classifying elliptic curves with non-trivial entanglements to a group theoretic problem and a question of rational points on curves. 

As an application of the framework we lay out, we prove the following theorem.

\begin{thmx}\label{thm:main}
There are exactly 9 pairs $((p,q),T)$, with $p<q\in\ZZ$ distinct primes and $T$ a finite group such that there are infinitely many non-$\overline{\mQ}$-isomorphic elliptic curves over $\QQ$ with an unexplained $(p,q)$-entanglement of type $T$ (Definitions \ref{def:unexplained} and \ref{def:elliptic_unexplained}). 

For each possible combination of entanglement level $(p,q)$ and type $T$, we completely classify the elliptic curves defined over $\QQ$ with that combination. 
\end{thmx}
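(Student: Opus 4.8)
The plan is to translate the arithmetic condition in Theorem~\ref{thm:main} into a purely group-theoretic one and then reduce the count to a question about rational points on modular curves. Since $p$ and $q$ are distinct primes we have $\gcd(p,q)=1$ and $\lcm(p,q)=pq$, so $\QQ(E[\gcd(p,q)])=\QQ$ and the type of a $(p,q)$-entanglement is the Galois group $\Gal(K/\QQ)$ of $K=\QQ(E[p])\cap\QQ(E[q])$. Writing $G=\Im(\rho_{E,pq})$ and using the isomorphism $\GL_2(\zZ{pq})\cong\GL_2(\mF_p)\times\GL_2(\mF_q)$, the projections $G_p,G_q$ are the images of the mod-$p$ and mod-$q$ representations, and $G$ surjects onto each factor with surjective determinant. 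By Goursat's lemma such a $G$ is a fiber product $G_p\times_T G_q$, where $T$ is a common quotient of $G_p$ and $G_q$, and $K$ is precisely the fixed field attached to this common quotient. Thus $G$ represents a $(p,q)$-entanglement of type $T$ in the sense of Definition~\ref{def:representsentanglement} exactly when $T$ is non-trivial, and the unexplained hypothesis (Definitions~\ref{def:unexplained} and~\ref{def:elliptic_unexplained}) discards the part of $T$ realized through the determinant/cyclotomic character, i.e.\ the intersections forced by the Weil pairing and Kronecker--Weber.

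First I would set up the infinitude criterion. For a group $G$ as above, let $X_G$ denote the associated modular curve together with its $j$-map $X_G\to X(1)=\PP^1_j$. There are infinitely many non-$\Qbar$-isomorphic $E/\QQ$ with an unexplained $(p,q)$-entanglement of type $T$ if and only if $X_G$ has infinitely many rational points away from its cusps (the CM points contribute only finitely many $j$-invariants, so they are irrelevant to an infinite family). By Faltings' theorem this forces $X_G$ to have genus $0$ with a rational point, in which case $X_G\cong\PP^1$, or genus $1$ of positive rank. Part of the argument is to verify that the genus-one modular curves that actually arise in this classification have only finitely many rational points (e.g.\ rank zero), so that every infinite family comes from a genus-zero curve. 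The theorem then reduces to enumerating the conjugacy classes of $G\subseteq\GL_2(\zZ{pq})$ that represent an unexplained $(p,q)$-entanglement of type $T$ and satisfy $g(X_G)=0$.

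Next I would carry out the enumeration. Using Dickson's classification of the subgroups of $\GL_2(\mF_p)$ (Borel, split and non-split Cartan and their normalizers, and the exceptional $A_4$, $S_4$, $A_5$ types) I would list the possible quotients $T$ of each admissible $G_p$; a pair $(p,q)$ can support a type $T$ only when $T$ is simultaneously a quotient of a subgroup of $\GL_2(\mF_p)$ and of $\GL_2(\mF_q)$, which already constrains $T$ to a short list of small groups. For each admissible triple $((p,q),T)$ and each Goursat fiber product $G=G_p\times_T G_q$, I would compute $g(X_G)$ by Riemann--Hurwitz applied to $X_G\to X(1)$, reading the ramification over $0,1728,\infty$ off the permutation action of $G$. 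The decisive input is a bounding argument: the degree of the $j$-map equals the index of $G$, and since the genus grows essentially linearly in this index, the condition $g(X_G)=0$ forces the index, and hence $\max(p,q)$, to be bounded. Consequently only finitely many $(p,q)$ survive, and a finite computation over those primes isolates exactly the $9$ pairs. The hard part will be this enumeration together with proving completeness of the genus-zero list, since it requires controlling every Goursat configuration and uniformly ruling out the positive-genus, rank-zero, and explained cases.

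Finally, for the explicit classification I would realize each surviving $X_G$ as a parametrized curve. In each of the $9$ cases $g(X_G)=0$ and $X_G$ carries a rational point, so $X_G\cong\PP^1$; I would produce an explicit parametrization and compute the corresponding $j$-map $\PP^1\to\PP^1_j$. This $j$-map is the one-parameter family realizing all $E/\QQ$ with the prescribed unexplained $(p,q)$-entanglement of type $T$, completing both the count and the explicit classification asserted in Theorem~\ref{thm:main}.
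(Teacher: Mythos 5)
Your overall strategy---translate to group theory via Goursat, bound the level, enumerate admissible subgroups, and compute modular curves---is the same as the paper's. But there are two genuine problems. First, your claim that ``every infinite family comes from a genus-zero curve'' because the genus-one curves that arise ``have only finitely many rational points (e.g.\ rank zero)'' is false for this classification: the paper finds exactly two maximal genus-one groups representing an unexplained entanglement (both of level $14$, giving a $(2,7)$-entanglement of type $\ZZ/2\ZZ$, with mod $7$ image the normalizer of a split resp.\ non-split Cartan), and their modular curves $X_{G_s}$ and $X_{G_n}$ are each isomorphic over $\QQ$ to the rank-one elliptic curve \texttt{196a1}. These positive-rank genus-one curves contribute infinitely many non-$\Qbar$-isomorphic curves and are indispensable for the ``complete classification'' half of the theorem (the genus-zero $(2,7)$ families only cover the Borel case mod $7$). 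Your final paragraph, which asserts $g(X_G)=0$ in all nine cases, would therefore miss part of the moduli. Second, your bounding step is incomplete as stated: the genus of $X_G$ is governed by the congruence subgroup attached to $G\cap\SL_2(\zZ{pq})$, so ``genus $\le 1$'' bounds the $\SL_2$-level via Cox--Parry/Cummins--Pauli only after one proves that an \emph{unexplained} $(p,q)$-entanglement forces the $\SL_2$-level to equal $pq$ (rather than $1$, $p$, or $q$). This is exactly the paper's Lemma \ref{lemma:levelpq}; without it, a group of huge level $pq$ containing $\SL_2(\zZ{pq})$-like pieces could still have genus $0$, and your index-based heuristic does not by itself bound $\max(p,q)$.

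A smaller but still substantive omission: for the admissible groups $G$ not containing $-I$ (several of the nine cases are of this form), $X_G\cong\PP^1$ is only a coarse space, and producing the $j$-map does not classify the elliptic curves---one must exhibit the correct quadratic twist of the universal family (the paper's tables give $a$-invariants, not $j$-maps, in these cases, and computes them via an isogeny/twisting ``constellation''). You would need to add this to complete the classification statement.
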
 

In order to classify the elliptic curves defined over $\QQ$ with each combination, we need to compute 24 different modular curves:~22 of which have genus 0 and 2 of which are genus 1 with positive rank. The details of the classification can be found in Section \ref{sec:tables}.

During our analysis of the genus 1 groups, we encounter an exceptional isomorphism. 
The two genus 1 groups we find both represent a $(2,7)$-entanglement of type $\ZZ/2\ZZ$. They both have full image mod 2, and they only differ in their mod 7 image:~one of the groups has mod 7 image conjugate to the normalizer of the non-split Cartan at level 7 and the other has mod 7 image conjugate to the normalizer of the split Cartan at level 7. We denote these level 14 groups by $G_n$ and $G_s$, respectively; the generators for these groups can be found in Section \ref{sec:genusone}.  
The amazing thing is that the modular curves associated to these two groups are actually isomorphic over $\QQ$.

\begin{propx}\label{prop:iso}
The genus one modular curves of positive rank $X_{G_s}$ and $X_{G_n}$ are both isomorphic over $\mQ$ to the elliptic curve \href{https://www.lmfdb.org/EllipticCurve/Q/196a1/}{\emph{\texttt{196a1}}}. 
\end{propx}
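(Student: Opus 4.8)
The plan is to verify Proposition~\ref{prop:iso} by explicitly computing the modular curves $X_{G_s}$ and $X_{G_n}$ as genus one curves over $\QQ$ and then exhibiting isomorphisms to the elliptic curve \texttt{196a1}. Since both $G_s$ and $G_n$ are subgroups of $\GL_2(\zZ{14})$ with full image mod $2$ and with mod $7$ image equal to the normalizer of the split (resp.\ non-split) Cartan, the first step is to produce the generators of each group (recorded in Section~\ref{sec:genusone}) and to confirm that each has genus one via the standard ramification/Riemann--Hurwitz computation for the associated cover $X_G \to X(1) = \PP^1_j$. Because each group has surjective determinant, $X_{G_s}$ and $X_{G_n}$ are geometrically connected curves defined over $\QQ$, and each carries a rational point (coming, for instance, from a known elliptic curve over $\QQ$ realizing the relevant entanglement), so each is an elliptic curve over $\QQ$.

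First I would compute a model for each modular curve. The concrete approach is to work with the function field of $X_G$: one finds modular functions invariant under $G$, expresses them as power series in $q$ using the action of $G$ on the relevant level structures, and extracts enough relations to obtain a plane model or a Weierstrass equation. Equivalently, since each curve is genus one with a rational point, I would fit a Weierstrass equation by matching the $j$-map $X_G \to X(1)$ against the universal $j$-invariant, determining the coefficients from the $q$-expansions. Having obtained Weierstrass equations $E_s$ and $E_n$ for $X_{G_s}$ and $X_{G_n}$, the next step is to identify each in the $L$-functions and Modular Forms Database (LMFDB): compute the conductor and a few Hecke eigenvalues $a_p$ by reduction mod small primes $p$, and match against the isogeny class \texttt{196a}. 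To pin down \texttt{196a1} specifically rather than merely an isogenous curve, I would compare standard invariants (minimal discriminant, torsion structure, and leading Weierstrass coefficients after minimalization); alternatively one can exhibit the explicit change of variables over $\QQ$ taking $E_s$ (resp.\ $E_n$) to \texttt{196a1}.

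The genuinely interesting point --- and the one worth emphasizing --- is that $X_{G_s}$ and $X_{G_n}$ are \emph{isomorphic over $\QQ$} despite $G_s$ and $G_n$ being non-conjugate subgroups of $\GL_2(\zZ{14})$ (their mod $7$ images are the normalizers of inequivalent Cartan subgroups). A priori these curves parametrize genuinely different moduli problems, so the isomorphism is not formal; it is the exceptional coincidence the surrounding text advertises. Accordingly, the cleanest way to present the result is to compute both Weierstrass models explicitly and observe that each is $\QQ$-isomorphic to \texttt{196a1}, the composite isomorphism $X_{G_s} \xrightarrow{\sim} \texttt{196a1} \xrightarrow{\sim} X_{G_n}$ then being the nontrivial statement.

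The main obstacle will be the model computation itself: obtaining correct Weierstrass equations for the two level $14$ modular curves requires careful bookkeeping of the $G$-action on the $14$-torsion and accurate $q$-expansions of the invariant modular functions, and an arithmetic error there would propagate to a wrong LMFDB label. I would mitigate this by cross-checking the computed $j$-maps at several CM or otherwise recognizable values of $j$, by verifying that known elliptic curves over $\QQ$ with the prescribed $(2,7)$-entanglement of type $\zZ{2}$ give rational points mapping correctly through the $j$-map, and by confirming the conductor and first several $a_p$ against \texttt{196a1} independently for each of $E_s$ and $E_n$. This is the step I expect to carry out in \texttt{Magma}, with the final identification reduced to matching invariants.
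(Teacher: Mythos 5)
Your plan is sound and would in principle establish the proposition, but it takes a genuinely different---and computationally heavier---route than the paper. You propose to build models of $X_{G_s}$ and $X_{G_n}$ from scratch by finding $G$-invariant modular functions of level $14$, fitting Weierstrass equations from $q$-expansions, and then identifying each curve in the LMFDB by its conductor and Hecke eigenvalues. The paper instead exploits the specific structure of these two groups: since each surjects mod $2$, has mod $7$ image the full normalizer of a Cartan, and represents an entanglement of type $\zZ{2}$, the curve $X_G$ is a degree-$2$ cover of the genus-zero curve $X_s^+(7)$ (resp.\ $X_n^+(7)$), whose hauptmodul and $j$-map are already known from Zywina's work. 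Writing down the generic elliptic curve $\mathcal{E}_t$ over $\mQ(t)$ for that genus-zero curve, the paper computes $\Delta_{\mathcal{E}_t}$ modulo squares and locates the relevant quadratic subfield of $\mQ(t)(\mathcal{E}_t[7])$ by factoring the $7$-division polynomial; the entanglement condition then becomes the statement that an explicit cubic (resp.\ quartic) in $t$ is a square, which immediately yields the models $y^2 = x^3-4x^2+3x+1$ and $y^2 = -(x-3)(x^3-7x^2+7x+7)$, both visibly isomorphic to \texttt{196a1}, with the $j$-maps obtained by composing with the known genus-zero $j$-maps. The paper's method buys short, independently checkable equations essentially for free, precisely sidestepping the level-$14$ $q$-expansion bookkeeping you flag as your main obstacle; your method is more general (it does not need the entanglement to be quadratic or the intermediate modular curve to be genus zero with a known hauptmodul) and supplies an independent verification through $L$-function data. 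Either way, the $\mQ$-isomorphism $X_{G_s}\simeq X_{G_n}$ emerges only as a coincidence of equations, not from any moduli-theoretic map.
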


A priori, there is no reason to expect that these curves would be isomorphic over $\Qbar$ let alone over $\QQ$. 
Looking at the computations in Section \ref{sec:genusone}, it is unclear if this is a coincidence or if there is something deeper happening (cf.~\cite{baran:exceptionalisomorphism}). 
Filip Najman pointed out that there is no ``moduli interpretation'' of the isomorphism as the curves have a different number of cusps over $\mQ$. 
While we cannot explain this isomorphism, one of the referees alerted us to the fact that we can precisely describe the entanglement field of the moduli of $X_{G_n}$ and $X_{G_s}$. 
If $E/\mQ$ denotes an elliptic curve whose mod 14 image is conjugate to $G_n$ (resp.~$G_s$), then we have that quadratic subfield of $\mQ(E[7])$ that coincides with $\mQ(\sqrt{\Delta_E})$ is the fixed field of $\mQ(E[7])$ by the non-split Cartan subgroup at level 7 (resp.~the split Cartan subgroup at level 7). In the case where $E/\mQ$ has CM, this fixed field agrees with the CM field of $E$ (see the table in Subsection \ref{subsection:genus1Table} for examples).

\subsection*{Overview of proof of Theorem \ref{thm:main}}
Our proof of Theorem \ref{thm:main} is inspired by the proofs of the main theorems of \cite{rouse2014elliptic, zywinapossible, sutherlandzywina2017primepower}. 
In these works, the authors begin by defining a purely group theoretic notion which captures the moduli problem of interest and computing all of the subgroups of a certain general linear group which satisfy their notion. 
Once they have the complete list of such groups, they determine the equation of the corresponding modular curve, and then proceed with a determination of the rational points on these modular curves.

In more detail, we break the proof of Theorem \ref{thm:main} into a few steps and point the reader to where they can find more information about each step within the paper. 

\medskip

\noindent\underline{\sc Step 0}: 
Let $\mathscr{L}$ be the set of natural numbers $n$ that are products of two distinct primes such that there exists a congruence subgroup $\Gamma$ of level $n$ where the genus of $X_\Gamma$ is either 0 or 1. 
From \cite{CoxParry}, we know that $\mathscr{L}$ is a finite list, and the work of  \cite{CumminsPauli} makes the list explicit:
\[
\mathscr{L} = \{ 6, 10, 14, 15, 21, 22, 26, 33, 34, 38, 39 \}.
\]
Using \cite[Corollary 1.6]{sutherlandzywina2017primepower}, we may immediately exclude the levels $34$ and $38$ as there are no modular curves of levels $17$ or $19$, which have infinitely many $\mQ$-rational points. 
Therefore, we can refine the list $\mathscr{L}$ to 
\[
\mathscr{L}' = \{ 6, 10, 14, 15, 21, 22, 26, 33, 39 \}.
\]
For each $n\in \mathscr{L}'$, we compute the sets 
\[
\mathcal{G}_0(n) =\{G \subseteq \GL_2(\ZZ/n\ZZ) : G\hbox{ is admissible and the modular curve $X_G$ has genus 0}\}, 
\]
and 
\[
\mathcal{G}_1(n) =\{G \subseteq \GL_2(\ZZ/n\ZZ) : G\hbox{ is admissible and the modular curve $X_G$ has genus 1}\},
\]
where the we refer the reader to Definition \ref{3.1} for the definition of admissible. 
We will search for groups that represent unexplained entanglements from these sets. For the definition of representing an unexplained entanglement, see Definitions \ref{def:representsentanglement}, \ref{defn:primitiveentanglement}, and \ref{def:unexplained}. 

We pause here to remark that it suffices to check only these levels. If $G \subseteq \GL_2(\zZ{n})$ represents a primitive $(p,q)$-entanglement (Definition \ref{defn:primitiveentanglement}), then the corresponding $\Gamma$ either has level $pq$, level $p$, level $q$, or is all of $\SL_2(\mZ)$. If $\Gamma$ has level $p$, level $q$, or is all of $\SL_2(\mZ)$, then $\Gamma$ represents a trivial entanglement, and hence $G$ represents an explained entanglement (Definition \ref{def:explained}) and thus can be excluded. 
\medskip

\noindent
\underline{\sc Step 1a}: 
For each $n\in\mathscr{L}'$, we determine which groups $G\in \mathcal{G}_0(n)$ represent an unexplained entanglement. 
For these groups, we sort them into sets $\mathcal{G}_{0,k}(n)$ based on their entanglement level and type, where $k$ is an ordered pair of the form $((a,b),T)$. 
\medskip

\noindent
\underline{\sc Step 1b}: 
For each $n\in\mathscr{L}'$, we first determine which groups $G\in \mathcal{G}_1(n)$ have the property that their corresponding genus 1 modular curve $X_G$ has positive rank over $\mQ$. 
Using \cite{sutherlandzywina2017primepower}, this can be determined using only the group $G$ and without computing $X_G$. 
Once we have eliminated the groups that correspond to rank zero elliptic curves, we proceed just as in {\sc Step 1a}. Again we sort the groups into sets, only this time $\mathcal{G}^+_{1,k}(n)$ has the added condition that $X_G$ is not only genus 1, but also that it has positive rank over $\QQ$.
\medskip

\noindent
\underline{\sc Step 2}: 
For each set of the form $\mathcal{G}_{0,k}(n)$ and $\mathcal{G}_{1,k}^+(n)$, we compute the set of maximal elements (with containment, up to conjugation) and put those into sets of the form $\mathcal{M}_{0,k}(n)$ or $\mathcal{M}_{1,k}(n)$. The values for which these sets are non-empty correspond to the 9 pairs $((a,b),T)$ in the statement of Theorem \ref{thm:main}. 
For a concrete example of why we take this step, see Example \ref{Ex:MaxGroups}.
\medskip

\noindent
\underline{\sc Step 3}: 
The last step is to determine a model for the modular curve $X_G$ over $\mQ$ as well as its $j$-map for each $G\in \mathcal{M}_{0,k}$ or $\mathcal{M}_{1,k}(n)$. 
The details of this step can be found in Sections \ref{sec:computataions}, \ref{sec:UnexplainedGenus0}, and \ref{sec:UnexplainedGenus1}.

\subsection*{Related results}
Brau--Jones \cite{brau3in2} and the second author \cite[Theorem 8.7]{morrow2017composite} have classified all elliptic curves $E/\Q$ with $(2,3)$-entanglement of non-abelian type. 
In recent work, Lozano-Robledo and the first author \cite{danielsLR:coincidences} classified the elliptic curves $E/\Q$, and primes $p$ and $q$ such that $\mQ(E[p])\cap \mQ(\zeta_{q^k})$ is non-trivial and determined the degree of this intersection. As a consequence, they also classify all elliptic curves $E/\Q$ and integers $m,n$ such that the $m$-th and $n$-th division fields coincide.
Recently, Campagna--Pengo \cite{CampagnaAndPengo} have studied the entanglements of CM elliptic curves focusing on when division fields become linearly disjoint, and they used their results to determine the index of the adelic image of Galois associated to a CM elliptic curve over $\mQ$ inside of the normalizer of a certain Cartan subgroup (see \textit{loc.~cit.~}Corollary 4.6 and Remark 4.7 for details). 
Finally, Jones--McMurdy \cite{JonesM:Nonabelianentanglements} determine the genus zero modular curves and their $j$-maps whose rational points correspond to elliptic curves with entanglements of non-abelian type. 

\subsection*{Comments on code}
We mention here that the code and techniques used throughout this paper build upon those of previous results. The authors would like to especially point out the results and code  in the following articles \cite{zywinapossible, rouse2014elliptic, Sutherland, sutherlandzywina2017primepower, DanLRNSQ3tor}. We also relied heavily on the LMFDB \cite{lmfdb}, in order to understand and generate the examples in this paper. 

All of the computations in this paper were performed using \texttt{Magma} \cite{Magma}. The code used to do the computations can be found at the following link. 
\begin{center}
{\color{black}{\url{https://github.com/jmorrow4692/Entanglements}}}
\end{center}

\subsection*{Outline of paper}
In Section \ref{sec:preliminaries}, we recall some background on Galois representations attached to elliptic curves, modular curves, and Siegel functions. 
In Section \ref{sec:grouptheoreticdefinitions}, we give our group theoretic definition of entanglements and an additional criteria for G to represent an explained or unexplained entanglement. 
In Section \ref{sec:entanglementsellipticcurves}, we show how the group theoretic definitions codify entanglements of division fields of elliptic curves and provide several examples to illustrate the relationship between the notions.

Section \ref{sec:computationsmodularcurves} begins the description of the modular curve computations and gives explicit examples of these computations for a genus 0 subgroup with $-I$ using Siegel functions and for genus 0 subgroups without $-I$. 
In Sections \ref{sec:UnexplainedGenus0} and \ref{sec:UnexplainedGenus1}, we discuss computations of modular curves of genus 0 and 1, respectively, which were done using different methods. 
Finally, in Section \ref{sec:tables}, we provide tables of the various modular curves whose rational points parametrize elliptic curves with an unexplained $(p,q)$-entanglement of type $T$ for the pairs $((p,q),T)$ mentioned in Theorem \ref{thm:main}. 

\subsection*{Acknowledgements} 
The authors would like to thank Jeffrey Hatley, Nathan Jones, \'Alvaro Lozano--Robledo, Ken McMurdy, and David Zureick-Brown for helpful conversations. 
We are especially indebted to Jeremy Rouse for his help with Example \ref{exam:Jeremy} and his support on this project. 
Also the authors would like to thank Garen Chiloyan, Enrique Gonz\'alez-Jim\'enez, Jeffrey Hatley, \'Alvaro Lozano--Robledo, and Filip Najman for useful comments on an earlier draft and to extend their thanks to Lea Beneish for detailed and thoughtful comments on an earlier draft. 
The authors would like to especially thank Nathan Jones for his thorough reading of the manuscript and for several corrections. 
Finally, we would like to thank the two anonymous referees for their detailed and thoughtful comments.

\subsection*{Conventions}
Throughout, we will use the following conventions.

\subsubsection*{Groups}
We set some notation for specific subgroups of $\GL_2(\zZ{\ell})$. Let $\ell{\texttt Cs}$ be the subgroup of diagonal matrices. 
Let $\epsilon = -1$ if $\ell \equiv 3 \pmod 4$ and otherwise let $\epsilon \geq 2$ be the 
smallest integer which is not a quadratic residue modulo $\ell$. 
Let $\ell{\texttt Cn}$ be the subgroup consisting of matrices of the form $\begin{psmallmatrix} a & b\epsilon \\ b & a \end{psmallmatrix}$ with $(a,b) \in \zZ{\ell}^2 \setminus \brk{(0,0)}$. 
Let $\ell{\texttt Ns}$ and $\ell{\texttt{Nn}}(\ell)$ be the normalizers of $\ell{\texttt Cs}$ and $\ell{\texttt Cn}$, respectively, in $\GL_2(\zZ{\ell})$. 
We have $[\ell{\texttt{Ns}} : \ell{\texttt{Cs}}] = 2$ and the non-identity coset of $\ell{\texttt{Cs}}(\ell)$ in $\ell{\texttt{Ns}}(\ell)$ is represented by $\begin{psmallmatrix} 0 & 1 \\ 1 & 0 \end{psmallmatrix}$. 
Similarly, $[\ell{\texttt{Nn}} : \ell{\texttt{Cn}}] = 2$ and the non-identity coset of $\ell{\texttt{Cn}}$ in $\ell{\texttt{Nn}}$ is represented by $\begin{psmallmatrix} 1 & 0 \\ 0 & -1 \end{psmallmatrix}$. Let $\ell{\texttt B}$ be the subgroup of upper triangular matrices in $\GL_2(\zZ{\ell})$. 
This notation was established by Sutherland in \cite{Sutherland} and is used in the LMFDB \cite{lmfdb}. We will also use Sutherland's notation for the less standard subgroups of level $p$. 

When studying an entanglement group $G$ of composite level, it is useful to keep track of their images mod $a$ and $b$. To this end, if we have a group of level $pq$, we will assign it a label of $[L_p(G),L_q(G)]$ where $L_p(G)$ (resp.~$L_q(G)$) is the Sutherland \cite{Sutherland} label of the image of $G$ mod $p$ (resp.~mod $q$).

Finally, we will use the notation $\pi_a$ to denote the reduction map of $\GL_2(\zZ{n}) \to \GL_2(\zZ{a})$ where $a\mid n$ and $n$ should be clear from context.

\subsubsection*{Elliptic curves}
For a field $k$, we will use $E/k$ to denote an elliptic curve over $k$. 
For a square-free  element $d \in k^{\times}/(k^{\times})^2$, the twist of $E$ by $d$ will be denoted by $E^{(d)}$. 
Any particular elliptic curve over $\mQ$ mentioned in the paper will be given by Cremona reference and a link to the corresponding LMFDB \cite{lmfdb} page when possible. 

\section{\bf Preliminaries}
\label{sec:preliminaries}
In this preliminary section, we recall background on Galois representations associated to elliptic curves, modular curves, and Siegel functions. 

\subsection{Galois representations of elliptic curves and modular curves}
Let $E$ be an elliptic curve over $\mQ$. 
For any positive integer $n$, we denote the $n$-torsion subgroup of $E(\overline{\mQ})$, where $\overline{\mQ}$ is a fixed algebraic closure of $\mQ$, by $E[n]$. For a prime $\ell$, let 
\begin{equation*}
T_{\ell}(E)\coloneqq \varprojlim_{n\geq 1}E[\ell^n]
\hbox{\ \ \ and\ \ \ }
T(E) \coloneqq \varprojlim_{n\geq 1} E[n]
\end{equation*}
denote the $\ell$-adic Tate module and adelic Tate module, respectively. 
By fixing a $\widehat{\mZ}$-basis for $T(E)$, there is an induced $\zZ{n}$-basis on $E[n]$ for any positive integer $n$. The absolute Galois group $G_{\mQ} \coloneqq \Gal (\overline{\mQ}/\mQ)$ has a natural action on each torsion subgroup, which respects each group structure. In particular, we have the following continuous representations
\begin{align*}
\rho_{E,n}\colon &G_{\mQ}  \longrightarrow  \Aut(\zZ{n}) \iso \GL_2(\zZ{n})  &(\text{mod }n)&,\\
\rho_{E,\ell^{\infty}} \colon & G_{\mQ} \longrightarrow  \Aut(T_{\ell}(E))  \iso \GL_2(\mZ_{\ell}) &(\ell\text{-adic}),&\\
\rho_{E}\colon & G_{\mQ} \longrightarrow \Aut(T(E)) \iso \GL_2(\widehat{\mZ}) &(\text{adelic}),& 
\end{align*}
where the image under $\rho$ is uniquely determined up to conjugacy in its respective general linear group. 
The \cdef{$n$-division field} $\mQ(E[n])$ is the fixed field of $\overline{\mQ}$ by the kernel of the mod $n$ representation;  moreover, the Galois group of this number field is the image of the mod $n$ representation.

A celebrated theorem of Serre \cite{serre:OpenImageThm} says that for a non-CM elliptic curve $E/\mQ$, the adelic representation $\rho_E$ has open image in $\GL_2(\widehat{\mZ})$. 
Using the isomorphism 
\[
\GL_2(\widehat{\mZ})  \simeq \prod_{\ell \text{ prime}} \GL_2(\mZ_{\ell}), 
\]
we see that for any non-CM elliptic curve over $\mQ$, there exists a smallest integer $r_{E/\mQ} >0$ such that for all $\ell\geq r_{E/\mQ}$ , the $\ell$-adic representation is surjective. Serre \cite[p.~399]{Serre:applicationsChebotarev} asked whether $r_{E/\mQ} = 41$. 
In \cite[Conjecture 1.1]{zywina2011surjectivity}, Zwyina gave a refined conjecture concerning the surjectivity of the mod $\ell$ image and provided a practical algorithm (implemented in \texttt{Sage}) to compute the finite set of primes $\ell$ for which $\rho_{E,\ell}(G_{\mQ})$ is not surjective; a prime $\ell$ is called \cdef{exceptional} if it belongs to this finite set. 
Finally, Sutherland \cite{Sutherland} performed extensive computations on determining the mod $\ell$ image of Galois for elliptic curves in the Cremona tables \cite{lmfdb} and in the Stein--Watkins database \cite{SteinWatkins}, which led to further refinements of these conjectures (see \cite[Conjecture 1.1]{Sutherland}). 
Below, we provide a version of their conjectures.

\begin{conjecture}[Serre, Sutherland, Zywina]
Let $E/\mQ$ be a non-CM elliptic curve. If $\ell > 37$, then $\rho_{E,\ell}$ is surjective. 
\end{conjecture}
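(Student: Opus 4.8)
The plan is to treat this as an instance of Serre's uniformity problem and to attack it through the classification of maximal subgroups of $\GL_2(\mF_\ell)$. The Weil pairing forces $\det\circ\rho_{E,\ell}$ to be the mod $\ell$ cyclotomic character, so the image $\rho_{E,\ell}(G_\mQ)$ always surjects onto $(\zZ{\ell})\unit$. A standard group-theoretic fact (essentially Dickson's classification) then says that a subgroup of $\GL_2(\mF_\ell)$ with surjective determinant is either all of $\GL_2(\mF_\ell)$ or is contained in one of four families of maximal subgroups: a Borel subgroup, the normalizer of a split Cartan, the normalizer of a non-split Cartan, or the preimage of an exceptional subgroup ($A_4$, $S_4$, or $A_5$) of $\PGL_2(\mF_\ell)$. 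The first step is therefore to reduce non-surjectivity of $\rho_{E,\ell}$ to the assertion that, for $\ell>37$ and $E$ non-CM, the mod $\ell$ image avoids all four families; equivalently, that the associated modular curves have no non-cuspidal non-CM rational points.

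Next I would close the families one at a time by analyzing the rational points of the corresponding modular curve. For the Borel case one invokes Mazur's theorem on rational isogenies: for $\ell>37$ there is no non-CM $E/\mQ$ with a rational $\ell$-isogeny, so $X_0(\ell)$ contributes nothing. For the exceptional case one uses Serre's bounds, which show that the projective image of an irreducible mod $\ell$ representation can be $A_4$, $S_4$, or $A_5$ only for a small, explicitly bounded set of $\ell$ lying well below $37$, so the curves $X_{A_4}(\ell)$, $X_{S_4}(\ell)$, $X_{A_5}(\ell)$ acquire no rational points for $\ell>37$. For the normalizer of the split Cartan one appeals to the work of Bilu--Parent and Bilu--Parent--Rebolledo, which shows that $X_{\mathrm{sp}}^+(\ell)(\mQ)$ consists only of cusps and CM points for every $\ell\ge 11$ with $\ell\ne 13$, together with the quadratic Chabauty resolution of the remaining level $13$ by Balakrishnan--Dogra--M\"uller--Tuitman--Vonk.

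The hard part, and the reason this statement is recorded here as a conjecture rather than a theorem, is the normalizer of the non-split Cartan. The needed input is that $X_{\mathrm{ns}}^+(\ell)(\mQ)$ contains only cusps and CM points for all $\ell>37$, and no uniform proof of this is currently known. The structural difficulty is that the standard tools do not transfer: $X_{\mathrm{ns}}^+(\ell)$ has genus growing with $\ell$ while its Mordell--Weil rank is not provably smaller than its genus, so ordinary Chabauty is unavailable, and there is no Eisenstein quotient to exploit as in the Borel case. Partial results (for instance Le Fourn--Lemos for sufficiently large $\ell$ under extra hypotheses, and quadratic or higher Chabauty in individual small levels) handle particular $\ell$, but an argument covering every $\ell>37$ at once remains out of reach.

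In short, the proof I would attempt is exactly the maximal-subgroup strategy above, and three of the four cases---Borel, split Cartan, and exceptional---can be closed with existing results; the non-split Cartan normalizer is the single remaining obstruction, and it is precisely this case that keeps the conjecture open.
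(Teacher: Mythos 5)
The statement you were asked about is stated in the paper as a \emph{conjecture} (attributed to Serre, Sutherland, and Zywina) and the paper offers no proof of it; your response correctly recognizes that no proof is possible with current technology and accurately locates the obstruction. Your survey of the maximal-subgroup strategy is essentially right: Mazur disposes of the Borel case, Bilu--Parent--Rebolledo (plus the level-$13$ quadratic Chabauty computation) of the split Cartan normalizer, and classical bounds of the exceptional projective images, while the normalizer of the non-split Cartan for general $\ell>37$ remains open, which is exactly why the paper records the statement as a conjecture rather than a theorem. Since you neither claim a complete proof nor introduce an incorrect step, there is no gap to flag; your write-up is a faithful account of why the conjecture is still a conjecture.
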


We now describe a set of necessary conditions on the possible non-surjective images of $\rho_{E,N}(G_{\mQ})$, where $N \geq 2$. We follow closely the conventions laid out in \cite{sutherlandzywina2017primepower} modifying only the condition that $-I\in G$. 
\begin{defn}\label{3.1}
A subgroup $G$ of $\GL_2(\zZ{N})$ is \cdef{admissible} if it satisfies the following conditions:
\begin{itemize}
\item $G \neq \GL_2(\zZ{N})$,
\item $\det (G) = (\zZ{N})^\times$,
\item $G$ contains an element with trace 0 and determinant $-1$ that fixes a point in $(\zZ{N})^2$ of order $N$.
\end{itemize}
\end{defn}

\begin{prop}\protect{\cite[Proposition~2.2]{zywinapossible}} \label{3.2}
Let $E$ be an elliptic curve over $\mQ$ for which $\rho_{E,N}(G_{\mQ})$ is not surjective. Then $\rho_{E,N}(G_{\mQ})$ is an admissible subgroup of $\GL_2(\zZ{N})$.
\end{prop}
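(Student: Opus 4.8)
The plan is to verify, one condition at a time, the three defining properties of admissibility in Definition \ref{3.1} for the group $G := \rho_{E,N}(G_{\mQ})$; since admissibility is clearly invariant under conjugation in $\GL_2(\zZ{N})$, there is no harm in $G$ being defined only up to conjugacy. The first condition, $G \neq \GL_2(\zZ{N})$, is precisely the standing hypothesis that $\rho_{E,N}$ is not surjective, so it requires no argument.

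For the second condition, I would appeal to the Weil pairing $e_N\colon E[N] \times E[N] \to \mu_N$. Because $e_N$ is nondegenerate, alternating, and Galois-equivariant, comparing $e_N(\sigma P, \sigma Q)$ computed in the two obvious ways shows that $\det \circ\, \rho_{E,N}$ coincides with the mod $N$ cyclotomic character $\chi_N\colon G_{\mQ} \to (\zZ{N})^\times$ recording the Galois action on $\mu_N$. Since $\Gal(\mQ(\zeta_N)/\mQ) \cong (\zZ{N})^\times$, the character $\chi_N$ is surjective, and hence $\det(G) = (\zZ{N})^\times$.

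The third condition is the substantive one, and for it I would use complex conjugation. Fix an embedding $\Qbar \hookrightarrow \CC$ and let $c \in G_{\mQ}$ be the resulting complex conjugation, so that $A := \rho_{E,N}(c)$ is an involution in $G$; as complex conjugation inverts roots of unity, $\det A = \chi_N(c) = -1$. To exhibit the required fixed point I would invoke the archimedean geometry of $E$: the real locus $E(\RR)$ is a compact real Lie group whose identity component is isomorphic to the circle $\RR/\ZZ$, and hence contains an element $P$ of order exactly $N$. This $P$ lies in $E[N]$ and is fixed by $c$. Completing $P$ to a $\zZ{N}$-basis of $E[N]$ puts $A$ in the form $\begin{psmallmatrix} 1 & b \\ 0 & d \end{psmallmatrix}$, and the determinant computation forces $d = -1$, so that $\operatorname{tr} A = 0$. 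Thus $A$ has trace $0$, determinant $-1$, and fixes the order-$N$ point $P$, which is exactly the third condition and finishes the verification. The step I expect to be the crux is this production of an order-$N$ fixed point of complex conjugation: it is the only place where one must step outside the formal manipulation of $\rho_{E,N}$ and use that $E(\RR)^{0} \cong \RR/\ZZ$ carries points of every order, whereas the remaining conditions are bookkeeping with the Weil pairing and the cyclotomic character.
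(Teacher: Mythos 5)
Your proof is correct, and it is essentially the argument of the cited source: the paper itself gives no proof of Proposition \ref{3.2}, deferring entirely to \cite[Proposition~2.2]{zywinapossible}, whose proof likewise gets surjectivity of the determinant from the Weil pairing and the cyclotomic character, and produces the trace-$0$, determinant-$(-1)$ element from complex conjugation fixing an order-$N$ point on the circle $E(\RR)^{0}$. The one step worth making explicit (and which your argument implicitly uses) is that an order-$N$ element of $(\zZ{N})^{2}$ can always be completed to a $\zZ{N}$-basis, which holds since its coordinates generate the unit ideal of $\zZ{N}$.
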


For an admissible subgroup $G \subseteq \GL_2(\zZ{N})$ with $-I\in G$, we can associate to it a modular curve $X_G$, which is a smooth, projective, and geometrically irreducible curve over $\mQ$. It comes with a natural morphism 
\begin{equation*}
\pi_G\colon X_G \longrightarrow \Spec \mQ[j] \cup \brk{\infty} \eqqcolon \mP^1_{\mQ},
\end{equation*}
such that for an elliptic curve $E/\mQ$ with $j_E \notin \brk{0,1728}$, the group $\rho_{E,n}(G_{\mQ})$ is conjugate to a subgroup of $G$ if and only if $j_E = \pi_G(P)$ for some rational point $P \in X_G(\mQ)$.

There has been extensive work on determining the modular curves $X_G$ and their associated $j$-maps $\pi_G$. 
More precisely, Zywina \cite{zywinapossible} has classified $(X_G,\pi_G)$ where $G \subseteq\GL_2(\zZ{\ell})$ is an admissible subgroup. 
Rouse--Zureick-Brown \cite{rouse2014elliptic} have determined $(X_G,\pi_G)$ where $G \subseteq\GL_2(\zZ{2^n})$ is an admissible subgroup, and Sutherland--Zywina \cite{sutherlandzywina2017primepower} have computed $(X_G,\pi_G)$ where $G \subseteq\GL_2(\zZ{\ell^n})$ is an admissible subgroup and the associated modular curve $X_G$ has genus 0 or genus 1 and positive rank. 
Our modular curve computations use many of the techniques laid out in these works, especially \cite{sutherlandzywina2017primepower}.

When the admissible subgroup $G$ does not contain $-I$, one cannot just work with the coarse space to understand the moduli interpretation because further information is required. See \cite[Sections 2 and 5]{rouse2014elliptic}.

Before describing a technique to compute these modular curves, we prove a lemma relating the mod $n$ image of Galois for $n$-isogenous elliptic curves $E,E'$ over $\mQ$. 

\begin{lemma}\label{lem:isogrep}
Let $E_1,E_2$ be elliptic curves over $\mQ$.
Let $\phi\colon E_1 \to E_2$ be a cyclic $n$-isogeny defined over $\mQ$ with kernel $\langle P_1 \rangle$, and let $\widehat{\phi}\colon E_2 \to E_1$ denote the dual isogeny. 
Fix a basis $\brk{P_1,Q_1}$ for the $n$-torsion on $E_1$. 
Let $P_2 = \phi(Q_1)$ and fix a basis $\brk{P_2,Q_2}$ for the $n$-torsion on $E_2$. 

Let $\sigma$ be an element of $\Gal(\Qbar/\QQ)$. 
If
\[
\rho_{E_1,n}(\sigma) = \begin{pmatrix} a& b\\ 0& d \end{pmatrix},
\]
where $a,d \in (\mZ/n\mZ)^\times$ and $b\in (\mZ/n\mZ)$, then there exists a $\beta\in(\ZZ/n\ZZ)$ such that 
\[
\rho_{E_2,n}(\sigma) = \begin{pmatrix} d& \beta\\ 0& a \end{pmatrix}.
\]
\end{lemma}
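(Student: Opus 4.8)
The plan is to work directly with the Galois action on torsion points and to exploit the $\QQ$-rationality of both $\phi$ and its dual $\widehat\phi$. Reading off the hypothesis, the matrix $\begin{psmallmatrix} a & b \\ 0 & d\end{psmallmatrix}$ records that $\sigma(P_1) = aP_1$ and $\sigma(Q_1) = bP_1 + dQ_1$; the upper-triangular shape reflects the fact that $\ker\phi = \langle P_1\rangle$ is $\Gal(\Qbar/\QQ)$-stable, which holds because $\phi$ is defined over $\QQ$.

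First I would compute the first column of $\rho_{E_2,n}(\sigma)$. Since $\phi$ is defined over $\QQ$, it is Galois-equivariant, so $\sigma(P_2) = \sigma(\phi(Q_1)) = \phi(\sigma(Q_1)) = \phi(bP_1 + dQ_1)$. Using $\phi(P_1) = 0$ (as $P_1$ generates $\ker\phi$) together with $P_2 = \phi(Q_1)$, this collapses to $\sigma(P_2) = dP_2$. Hence, with respect to the basis $\{P_2,Q_2\}$, the representation has the form $\rho_{E_2,n}(\sigma) = \begin{psmallmatrix} d & \beta \\ 0 & \delta\end{psmallmatrix}$ for some $\beta \in \ZZ/n\ZZ$ and some unit $\delta$; it remains only to identify $\delta$ with $a$.

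For the lower-right entry I would invoke the Weil pairing. For any elliptic curve over $\QQ$ and any choice of basis of its $n$-torsion, $\det \rho_{E,n}$ equals the mod $n$ cyclotomic character $\chi_n$, independently of the curve. Comparing the determinants of the two upper-triangular matrices gives $ad = \det\rho_{E_1,n}(\sigma) = \chi_n(\sigma) = \det\rho_{E_2,n}(\sigma) = d\delta$, and cancelling the unit $d$ forces $\delta = a$, which is exactly the claim. Alternatively, and more self-containedly, one can avoid the Weil pairing by using $\widehat\phi$: from $\phi\circ\widehat\phi = [n]$ and $\widehat\phi\circ\phi = [n]$ one checks that $\ker\widehat\phi = \langle P_2\rangle$ and that $\widehat\phi(Q_2) = cP_1$ for some unit $c\in (\ZZ/n\ZZ)^\times$. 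Applying $\sigma$ and using $\widehat\phi(P_2) = 0$ and $\sigma(P_1) = aP_1$ yields $\delta\,\widehat\phi(Q_2) = \widehat\phi(\sigma Q_2) = \sigma(\widehat\phi Q_2) = a\,\widehat\phi(Q_2)$, and since $\widehat\phi(Q_2)$ has order $n$ this again gives $\delta = a$.

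The only genuinely delicate point in either route is pinning down the $(2,2)$-entry: the equivariance of $\phi$ alone determines the first column for free, whereas the value $a$ in the lower-right slot is forced either by the determinant being the cyclotomic character or by the rationality of the dual isogeny. I expect the determinant argument to be the cleanest and would present it as the main line, relegating the dual-isogeny computation to a remark.
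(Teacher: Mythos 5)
Your main line is exactly the paper's argument: Galois-equivariance of $\phi$ gives $\sigma(P_2)=dP_2$, and the Weil-pairing identity $\det\circ\rho_{E,n}=\chi_n$ pins the lower-right entry down to $a$ (you simply spell out the final cancellation that the paper leaves as ``hence the result follows''). Your alternative via $\widehat\phi(Q_2)=cP_1$ with $c$ a unit is also correct, but it is supplementary rather than a different route.
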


\begin{proof}
Our assumption on the mod $n$ representation of $E_1$ tells us that for an element $\sigma \in G_{\mQ}$, the action of $\sigma$ on $E_1[p]$ can be described as:
\begin{align*}
\sigma(P_1) &= aP_1 \\
\sigma(Q_1) &= bP_1 + dQ_1.
\end{align*}
Since $\phi$ is defined over $\mQ$ (in particular, $\phi$ is Galois invariant) and a group homomorphism, we have that 
\begin{align*}
\sigma(P_2) &= \sigma(\phi(Q_1)) 
= \phi(\sigma(Q_1)) 
= \phi(bP_1 + dQ_1) 
= b\phi(P_1) + d\phi(Q_1) = dP_2. 
\end{align*}
Using properties of the Weil pairing (see for example \cite[Proposition III.8.1]{Silverman}), we know that $\det\circ\rho_{E,n}\colon\Gal(\Qbar/\QQ)\to (\ZZ/n\ZZ)^\times$ is the mod $n$ cyclotomic character. 
Therefore, we have that 
\[
\det(\rho_{E_1,n}(\sigma)) = \det(\rho_{E_2,n}(\sigma)),
\]
and hence the result follows. 
\end{proof}

\subsection{Siegel functions}\label{subsec:Siegel}
The modular curves $X_G$ of genus 0 with $X_G(\mQ) \neq \emptyset$ are isomorphic to the projective line, and for each such curve, the function field is of the form $\mQ(h)$ for some modular function $h$ of level $N$. Giving the morphism $\pi_G$ is then equivalent to expressing the modular $j$-invariant in the form $J(h)$. Below we will describe how to compute this modular function $h$ using Siegel functions, but before doing so, we provide a {brief} introduction to Siegel functions. 
A reader interested in a full treatment of the topic should see \cite[Chapter 2]{kubertLang:modularunits}. 

\begin{defn}
The \cdef{Siegel function} $g_\textbf{a}(\tau)$ associated to $\textbf{a} = (a_1,a_2)\in(\QQ/\ZZ)^2$ is a function on the complex upper half plane $\mathbb{H}$ defined by 
$$g_\textbf{a}(\tau) \coloneqq -q_{\tau}^{(1/2)\textbf{B}_2(a_1)}e^{2\pi i a_2(a_1-1)/2}(1-q_z)\prod_{n=1}^\infty(1-q_\tau^nq_z)(1-q_\tau^n/q_z)$$
where $q_\tau=e^{2\pi i \tau}$, $z=a_1\tau+a_2$, $q_z=e^{2\pi i z}$ and $\textbf{B}_2(x)=x^2-x+\frac{1}{6}$ is the second Bernoulli polynomial.
\end{defn}

The utility of these functions is two fold. First, if $\textbf{a} \in (\ZZ\left[\frac{1}{N}\right]/\ZZ)^2$, then the divisor of $g_\textbf{a}$ is completely supported at the cusps of the modular curve $X(N)$ and is easily computable. Second, if once again we restrict to subscripts $\textbf{a} \in (\ZZ\left[\frac{1}{N}\right]/\ZZ)^2$, then there are explicit conditions under which products of these functions become modular functions for a congruence subgroup $\Gamma$ of level $N$. 
For example, see \cite[Chapter 2]{kubertLang:modularunits}, \cite[Section 2]{daniels:Siegelfunctions}, and \cite[Section 4]{sutherlandzywina2017primepower}. 

Below, we will give the two main theorems that will allow us to compute the $j$-maps for most of the genus zero modular curves with $-I$, but before we can state the theorems we need to establish some notation; here we will follow the notation from \cite[Section 4]{sutherlandzywina2017primepower}.

We let $\Gamma\subseteq\SL_2(\ZZ)$ be a congruence subgroup of level $N$ and let $P_1,\dots,P_r$ be the cusps of the modular curve $X_\Gamma$. For each cusp $P_i$, choose a representative $s_i\in\QQ\cup\{\infty\}$ and a matrix $A_i\in\SL_2(\ZZ)$ such that $A_i\cdot\infty = s_i$. For each $i$, let $w_i$ be the width of $P_i$ (i.e., the smallest integer such that $A_i\begin{psmallmatrix} 1&w_i\\0&1\end{psmallmatrix} A_i^{-1}$ is in $\Gamma$).

Again following \cite{sutherlandzywina2017primepower}, we let $\mathcal{A}_N$ be the subset of $\textbf{a} = (a_1,a_2)$ in $\ZZ\left[\frac{1}{N}\right]^2 \setminus \ZZ^2$ such that one of the following holds:
\begin{itemize}
  \item $0<a_1<1/2$ and $0\leq a_2<1$,
  \item $a_1=0$ and $0<a_2\leq 1/2$,
  \item $a_1=1/2$ and $0\leq a_2\leq 1/2$.
\end{itemize}
The set $\mathcal{A}_N$ is chosen so that every non-zero coset of $\ZZ [ \frac{1}{N} ]^2/\ZZ^2$ is represented by an element of the form $\textbf{a}$ or $-\textbf{a}$ for a unique $\textbf{a} \in\mathcal{A}_N$. 
The group $\SL_2(\ZZ)$ has a natural action on $\mathcal{A}_N$ given by $(\textbf{a},\gamma) = \textbf{a}\cdot\gamma$ where we consider $\textbf{a}$ as a row vector. 
Restricting this action to $\Gamma$, we can consider the $\Gamma$-orbits of $\mathcal{A}_N$. Given an orbit $\mathcal{O}$, we let 
\[
g_\mathcal{O}(\tau) = \prod_{\textbf{a}\in\mathcal{O}} g_\textbf{a}.
\]
From the work in \cite{kubertLang:modularunits}, we know that $g_\mathcal{O}^{12N}$ is a modular function for $\Gamma$. The hope is they can be used to generate functions on $X_\Gamma$.
  
\begin{lemma}\protect{\cite[Lemma 4.3]{sutherlandzywina2017primepower}\label{lem:divSFuncs}}
With the notation as above, we have
$${\rm div}(g_\mathcal{O}^{12N})= \sum_{i=1}^r\left(6Nw_j\sum_{\textbf{a}\in \mathcal{O}} B_2\left(\langle \left(\emph{\textbf{a}}A_j \right)_1 \rangle \right)\right)\cdot P_i,$$
where $\emph{\textbf{B}}_2(x)=x^2-x+\frac{1}{6}$,  $(\emph{\textbf{a}}A_j)_1$ is the first coordinate of the row vector $ \emph{\textbf{a}}A_j$, and $\langle x \rangle$ denotes the positive fractional part of $x$ (i.e., the number $0\leq\langle x \rangle<1$ such that $x- \langle x \rangle \in \ZZ$). 
\end{lemma}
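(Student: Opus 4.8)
The plan is to reduce the computation of the whole divisor to a local order-of-vanishing calculation at each individual cusp, using three standard facts about Siegel functions from \cite[Chapter 2]{kubertLang:modularunits}. First I would recall the fact noted just above the statement: for $\textbf{a}\in(\ZZ[\tfrac1N]/\ZZ)^2$ the function $g_\textbf{a}$ is a modular unit on $X(N)$, so its divisor is supported entirely on the cusps. Consequently the same holds for $g_\mathcal{O}^{12N}$, which we already know is a modular function for $\Gamma$, and it suffices to compute $\ord_{P_i}(g_\mathcal{O}^{12N})$ at each cusp $P_i$ of $X_\Gamma$. Since $g_\mathcal{O}=\prod_{\textbf{a}\in\mathcal{O}}g_\textbf{a}$ and orders of vanishing are additive, the problem collapses to computing $\ord_{P_i}(g_\textbf{a})$ for a single $\textbf{a}$, then summing over the orbit $\mathcal{O}$ and multiplying by $12N$.

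The second step is to read off the order of $g_\textbf{a}$ at the cusp $\infty$ directly from the product expansion in the definition. Writing $q_z=e^{2\pi i a_2}q_\tau^{a_1}$ and assuming $0\le a_1<1$, one checks that each of the factors $(1-q_z)$, $(1-q_\tau^n q_z)$, and $(1-q_\tau^n/q_z)$ tends to a nonzero limit as $q_\tau\to 0$ and so contributes no net power of $q_\tau$ (for $\textbf{a}\in\mathcal A_N$ the boundary case $a_1=0$, $a_2\neq 0$ is covered because $1-e^{2\pi i a_2}\neq 0$). Hence the leading behavior comes solely from the prefactor $q_\tau^{(1/2)B_2(a_1)}$, giving $\ord_\infty(g_\textbf{a})=\tfrac12 B_2(\langle a_1\rangle)$. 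The fractional part $\langle a_1\rangle$ is what appears because $g_\textbf{a}$ depends on $\textbf{a}\bmod\ZZ^2$ only up to a root of unity, which does not affect orders.

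To pass to an arbitrary cusp $P_i=A_i\cdot\infty$, I would invoke the $\SL_2(\ZZ)$-transformation law $g_\textbf{a}\circ A_i=\varepsilon\cdot g_{\textbf{a}A_i}$ from \cite{kubertLang:modularunits}, where $\varepsilon$ is a root of unity and $\textbf{a}A_i$ denotes the row-vector action. Since $\tau\mapsto A_i\tau$ carries $\infty$ to $s_i$, the order of $g_\textbf{a}$ at $s_i$ measured in the $X(1)$-parameter $q_\tau$ equals the order of $g_{\textbf{a}A_i}$ at $\infty$, namely $\tfrac12 B_2(\langle(\textbf{a}A_i)_1\rangle)$. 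The final ingredient — and the step I expect to be the most error-prone — is the width normalization: the local uniformizer at $P_i$ on $X_\Gamma$ is $e^{2\pi i\tau/w_i}=q_\tau^{1/w_i}$, so an expansion beginning at $q_\tau^{\alpha}$ has order $w_i\alpha$ with respect to it. Combining these gives $\ord_{P_i}(g_\textbf{a})=\tfrac{w_i}{2}B_2(\langle(\textbf{a}A_i)_1\rangle)$, and therefore
\[
\ord_{P_i}(g_\mathcal{O}^{12N})=12N\sum_{\textbf{a}\in\mathcal{O}}\frac{w_i}{2}B_2\big(\langle(\textbf{a}A_i)_1\rangle\big)=6Nw_i\sum_{\textbf{a}\in\mathcal{O}}B_2\big(\langle(\textbf{a}A_i)_1\rangle\big),
\]
which is exactly the asserted formula (the index $j$ inside the summand of the statement being a typo for $i$). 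The main obstacle is thus the careful bookkeeping in this last step: correctly tracking the width factor $w_i$, getting the fractional-part normalization right, and confirming that the roots of unity in both the modular-unit and the transformation laws are genuinely irrelevant to the divisor.
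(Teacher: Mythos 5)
Your proposal is correct. The paper itself gives no proof of this statement --- it is quoted verbatim as \cite[Lemma 4.3]{sutherlandzywina2017primepower} --- and your argument (support on cusps, reading $\ord_\infty(g_{\textbf{a}})=\tfrac12 B_2(\langle a_1\rangle)$ off the $q$-product, transporting to the cusp $P_i$ via the Kubert--Lang transformation law $g_{\textbf{a}}\circ A_i=\varepsilon\, g_{\textbf{a}A_i}$, and rescaling by the width $w_i$) is exactly the standard proof given in that reference. The only bookkeeping point you leave implicit is that the $\Gamma$-action on $\mathcal{A}_N$ may replace $\textbf{a}A_i$ by $-\textbf{a}A_i$ when renormalizing into $\mathcal{A}_N$; this is harmless both because $g_{-\textbf{a}}$ differs from $g_{\textbf{a}}$ by a root of unity and because $B_2(\langle -x\rangle)=B_2(\langle x\rangle)$.
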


Before we can state the next lemma, we need one last piece of notation. Let $\mathcal{O}_1,\dots,\mathcal{O}_n$ be the distinct $\Gamma$-orbits of $\mathcal{A}_N$ and let $D_i = {\rm div}(g_{\mathcal{O}_i}^{12N})$.

\begin{lemma}\protect{\cite[Lemma 4.4]{sutherlandzywina2017primepower}}\label{lem:haupt}
Suppose that $X_\Gamma$ is a genus 0 curve and that there is an $n$-tuple $\emph{\textbf{m}}=(m_1,\dots, m_n)\in\ZZ^n$ such that
$$\sum_{i=1}^n m_iD_i = -12N P_1 + 12NP_2.$$
Then there exists an explicitly computable $2N^2$-th root of unity $\zeta$ such that
$$h = \zeta\prod_{i=1}^n g_{\mathcal{O}}^{m_i}$$
is a hauptmodul for $\Gamma$.
\end{lemma}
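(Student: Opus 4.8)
The plan is to run the whole argument on the compact Riemann surface $X_\Gamma$, exploiting the genus $0$ hypothesis, and to reduce the statement to a principality argument plus a bookkeeping of roots of unity. Write $f = \prod_{i=1}^n g_{\mathcal{O}_i}^{m_i}$, so that the candidate hauptmodul is $h = \zeta f$. Since each $g_{\mathcal{O}_i}^{12N}$ is a modular function for $\Gamma$, so is $f^{12N}$, and its divisor is computed directly from \Cref{lem:divSFuncs} together with additivity of divisors:
\[
\mathrm{div}(f^{12N}) = \sum_{i=1}^n m_i\, \mathrm{div}(g_{\mathcal{O}_i}^{12N}) = \sum_{i=1}^n m_i D_i = -12N\,P_1 + 12N\,P_2,
\]
the last equality being exactly the hypothesis. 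Thus $f^{12N}$ is a $\Gamma$-modular function whose divisor on $X_\Gamma$ is $12N\,(P_2 - P_1)$.

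First I would produce a genuine generator of the function field. Because $X_\Gamma$ has genus $0$, the degree-$0$ divisor $P_2 - P_1$ is principal, so there is $h_0 \in \mathbb{C}(X_\Gamma)^{\times}$ with $\mathrm{div}(h_0) = P_2 - P_1$; having a single simple pole, such an $h_0$ is automatically a hauptmodul for $\Gamma$. The key step is then to show that $f$ itself differs from $h_0$ only by a constant. Each Siegel function $g_{\textbf{a}}$ is holomorphic and nowhere vanishing on $\mathbb{H}$, hence so is $f$, and $h_0$ is holomorphic and nonvanishing away from the cusps; therefore $f/h_0$ is holomorphic and nonvanishing on the connected domain $\mathbb{H}$. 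Its $12N$-th power $f^{12N}/h_0^{12N}$ is a $\Gamma$-modular function with trivial divisor on the complete curve $X_\Gamma$, hence a nonzero constant $c$. A holomorphic nonvanishing function on the connected set $\mathbb{H}$ whose $12N$-th power equals the constant $c$ takes values in the finite set of $12N$-th roots of $c$, so it is constant; thus $f = c'\,h_0$ for some $c' \in \mathbb{C}^{\times}$. In particular the potential twisting character of $f$ under $\Gamma$ is forced to be trivial, $f$ is $\Gamma$-invariant, and $\mathrm{div}(f) = P_2 - P_1$, so $f$ is already a hauptmodul up to the scalar $c'$.

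It remains to pin down the normalizing root of unity $\zeta$, which is the only genuinely computational part. Here I would read off the leading coefficient of the $q$-expansion of $f$ at $P_1$, where $\ord_{P_1} f = -1$. From the defining product, the ``unit part'' of $g_{\textbf{a}}$ contributes the factor $-e^{2\pi i a_2(a_1-1)/2}$; for $\textbf{a}\in\ZZ[\tfrac1N]^2$ one has $a_1,a_2\in\tfrac1N\ZZ$, so $a_2(a_1-1)/2\in\tfrac{1}{2N^2}\ZZ$ and this factor is a $2N^2$-th root of unity. Taking the product over the orbits $\mathcal{O}_i$ with the integer exponents $m_i$ (and absorbing the sign, since $-1$ is a $2N^2$-th root of unity as well), the leading coefficient of $f$ is an explicitly computable $2N^2$-th root of unity; setting $\zeta$ equal to its inverse makes $h = \zeta f$ have leading coefficient $1$ and hence a rational $q$-expansion, exhibiting $h$ as the desired explicitly computable hauptmodul. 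The transformation laws of Kubert--Lang \cite{kubertLang:modularunits} are what justify both that $g_{\mathcal{O}_i}^{12N}$ is $\Gamma$-invariant and the precise shape of this leading coefficient.

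I expect the main obstacle to be this last paragraph: correctly tracking the roots of unity coming from the automorphy factors of the Siegel functions, including the change of local parameter at $P_1$ through the scaling matrix $A_1$, so as to certify that $\zeta \in \mu_{2N^2}$ and to compute it explicitly. The divisor computation and the genus-$0$ principality argument are routine once \Cref{lem:divSFuncs} is available; the real content lies in controlling the constants.
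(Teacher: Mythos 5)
The paper does not reprove this lemma (it is quoted verbatim from Sutherland--Zywina), so I am judging your argument on its own terms. Your first two paragraphs are correct, and the $h_0$ device is a clean way to obtain the one non-obvious qualitative point: that $f=\prod_i g_{\mathcal{O}_i}^{m_i}$ is itself $\Gamma$-invariant with divisor $P_2-P_1$, and not merely its $12N$-th power. Since $f/h_0$ is holomorphic and nonvanishing on $\mathbb{H}$ and its $12N$-th power is a constant, it is constant; no complaint there.

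The gap is in the final paragraph, exactly where you anticipated trouble, and it is not merely a bookkeeping issue. First, the leading coefficient of $f$ at $P_1$ is \emph{not} a $2N^2$-th root of unity in general. For $\textbf{a}=(0,a_2)$ one has $q_z=e^{2\pi i a_2}$, so the factor $(1-q_z)=1-\zeta_N^{Na_2}$ survives into the constant term and the leading coefficient of $g_{\textbf{a}}$ is $-e^{-\pi i a_2}(1-e^{2\pi i a_2})=2i\sin(\pi a_2)$, which is not a root of unity. Such vectors are unavoidable: $\mathcal{A}_N$ contains $(0,a_2)$ by definition, and the paper's own level-$14$ computation uses the orbit representatives $(0,\tfrac{1}{14})$, $(0,\tfrac{1}{7})$, $(0,\tfrac{3}{14})$. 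Consequently, normalizing the leading coefficient to $1$ is in general not multiplication by an element of $\mu_{2N^2}$, and the function you produce is not of the form asserted. Second, and more fundamentally, you have misread what $\zeta$ is for. Over $\mathbb{C}$ \emph{any} nonzero scalar multiple of $h_0$ generates $\mathbb{C}(X_\Gamma)$, so if ``hauptmodul'' meant a generator over $\mathbb{C}$ the lemma would be vacuous with $\zeta=1$ and the constant $2N^2$ would carry no information. The content is arithmetic: $h$ must lie in, and generate, the function field of $X_\Gamma$ over $\mathbb{Q}(\zeta_N)$ (the paper itself records that this function field is $\mathbb{Q}(\zeta_N)(h)$, not $\mathbb{Q}(h)$), equivalently its $q$-expansion must have coefficients in $\mathbb{Q}(\zeta_N)$ --- not ``a rational $q$-expansion.'' The correct normalization isolates, in each $g_{\textbf{a}}$, the single factor $-e^{2\pi i a_2(a_1-1)/2}\in\mu_{2N^2}$ (using $a_1,a_2\in\tfrac{1}{N}\ZZ$); every remaining factor, including $(1-q_z)$ and the infinite product, is a Laurent series in $q^{1/N}$ with coefficients in $\ZZ[\zeta_N]$ because $q_z=q^{a_1}\zeta_N^{Na_2}$. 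Hence $f=\epsilon\cdot F$ with $\epsilon=\prod_i\prod_{\textbf{a}\in\mathcal{O}_i}\bigl(-e^{2\pi i a_2(a_1-1)/2}\bigr)^{m_i}\in\mu_{2N^2}$ and $F\in\mathbb{Q}(\zeta_N)((q^{1/N}))$, and one takes $\zeta=\epsilon^{-1}$. Combined with your divisor and principality argument, this exhibits $h=\zeta f$ as a degree-one element of $\mathbb{Q}(\zeta_N)(X_\Gamma)$, which is the assertion. As written, your step ``the leading coefficient of $f$ is an explicitly computable $2N^2$-th root of unity'' is false, and the proof does not close without the replacement above.
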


We can use Lemma \ref{lem:haupt} to find generators for the function fields of each genus 0 modular curve $X_\Gamma$ where $-I\in \Gamma$. Once a hauptmodul $h$ is computed, it suffices to find an algebraic relationship between $h$ and the usual modular $j$-invariant $J$. 
This is done using the methods from \cite{daniels:Siegelfunctions}, which we now summarize.

We know that the function field of our modular curve is $\QQ(\zeta_N)(h)$ and that $J\in \QQ(\zeta_N)(h)$, and thus there is a rational function 
\[
f(t) = \frac{a_0+ a_1t +\cdots + a_kt^k}{b_0+ b_1t +\cdots +b_l t^l} \in \QQ(\zeta_N)(t)
\]
such that $f(h) = J$. We can find $f(t)$ by clearing the denominator to get 
\[a_0+ a_1h +\cdots + a_kh^k = b_0J+ b_1hJ +\cdots + b_l h^l J,\]
and since we know the $q$-expansions of $h$ and $J$ to as many places as necessary, we can turn this into a linear algebra problem. 
We do this by considering the functions in $S_1 = \{1, h, h^2, \dots,h^k\}$ and $S_2 = \{J, hJ, h^2J, \dots,h^{l}J\}$ as vectors by taking the coordinate 
vectors of some approximation with respect to the standard basis and then look for an intersection between ${\rm Span\,}( S_1)$ and ${\rm Span\,}( S_2)
$. 
After expanding $h$ and $J$ to sufficiently many places, we can find a common vector in these spans and we use it to find $f(t)$, and hence determine our $j$-map $\pi_{\Gamma}$.

For each of the groups $G\subseteq\GL_2(\ZZ/N\ZZ)$ with $-I\in G$, we can associate to it a congruence subgroup $\Gamma_G$ by letting $\Gamma_G = \pi^{-1}(H)$ where $H = G \cap \SL_2(\ZZ/N\Z)$ and $\pi\colon \SL_2(\ZZ) \to \SL_2(\ZZ/N\ZZ)$ is the standard component-wise reduction map. 
Note that in the construction of our congruence subgroup, we lose some information (i.e., we can have subgroups $G_1$ and $G_2$ of $\GL_2(\ZZ/N\ZZ)$ such that $G_1\cap\SL_2(\ZZ/N\ZZ) = G_2\cap\SL_2(\ZZ/N\ZZ)$). 
From our above discussion, we have that the modular curves $X_{\Gamma_G}$ and $X_G$ are isomorphic over $\QQ(\zeta_N)$. For an explicit example illustrating this point, we refer the reader to Example \ref{exam:Nn3}.

In order to find a model for $X_{\Gamma_G}$ that is isomorphic to $X_G$ over $\QQ$ and to ensure that we get the correct $j$-map for the given extension of $H = G \cap \SL_2(\ZZ/N\ZZ)$, we have to take an extra step to calibrate $h$ using rational moduli. 
We do this by precomposing $h$ with a fractional linear transformation that takes 3 known rational points on $X_G$ to the points 0, 1, and $\infty$. This yields a new hauptmodul $h_1$, and when we search for an algebraic relationship between $h_1$ and the classical $j$-function, we get a rational function defined over $\QQ$. It is admittedly messy, but using techniques from \cite{rouse2014elliptic}, we are able to find a transformation that greatly simplifies these models.

\section{\bf Group theoretic definitions of entanglements}
\label{sec:grouptheoreticdefinitions}
In this section, we define entanglements from a group theoretic perspective. 
The relationship between these notions and the entanglements of division fields mentioned in Section \ref{sec:Intro} will be postponed until Section \ref{sec:entanglementsellipticcurves}. 

\subsection*{Notation}
Let $G$ be a subgroup of $\GL_2(\Z/n\Z)$ for some $n\geq 2$ with surjective determinant, let $a < b$ be divisors of $n$, let $c= \lcm(a,b)$, and let $d=\gcd(a,b)$. 
Let $\pi_c\colon \GL_2(\zZ{n})\to \GL_2(\zZ{c})$ be the natural reduction map, and set $G_c := \pi_c(G)$. 
We have the following reduction maps and normal subgroups of $G_c$
\begin{align*}
& \pi_a\colon \GL_2(\zZ{c}) \to \GL_2(\zZ{a}), \quad & N_a := \ker(\pi_a) \cap G_c\\
& \pi_b\colon \GL_2(\zZ{c}) \to \GL_2(\zZ{b}), \quad & N_b := \ker(\pi_b) \cap G_c\\
& \pi_d\colon \GL_2(\zZ{c}) \to \GL_2(\zZ{d}), \quad & N_d := \ker(\pi_d) \cap G_c.
\end{align*}
We will abuse notation and denote restrictions of the above maps to subgroups of $\GL_2(\zZ{c})$ with $\pi_{a},\pi_b,$ and $\pi_d$. 

We now offer two equivalent definitions for when $G$ represents an $(a,b)$-entanglement.

\begin{defn}\label{def:representsentanglement}
We say that $G$ \cdef{represents an $(a,b)$-entanglement} if 
\[
\langle N_a,N_b\rangle \subsetneq N_d.\]
The \cdef{type} of the entanglement is the isomorphism type of the group $N_d/\langle N_a, N_b\rangle$. 
\end{defn}

\begin{lemma}\label{lemma:equivdefinitions}
The group $G$ represents an $(a,b)$-entanglement if and only if 
\[
N_d/\pi_a^{-1}(\pi_a(N_b)) \simeq N_d/\pi_b^{-1}(\pi_b(N_a)) \not\simeq \brk{I}.
\]
\end{lemma}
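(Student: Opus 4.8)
The plan is to show that both quotients appearing in the lemma are literally equal to the single quotient $N_d/\langle N_a,N_b\rangle$, after which the stated equivalence is immediate from Definition \ref{def:representsentanglement}. The whole argument rests on two elementary facts about the kernels $N_a,N_b,N_d$ together with the standard ``preimage of an image'' identity, so I expect no serious difficulty beyond keeping the abuse of notation straight.

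First I would record the relevant containments and normality. Since $d=\gcd(a,b)$ divides both $a$ and $b$, and $d\mid a\mid c$, $d\mid b\mid c$, the reduction map $\pi_d$ on $\GL_2(\zZ{c})$ factors through both $\pi_a$ and $\pi_b$, whence $\ker(\pi_a)\subseteq\ker(\pi_d)$ and $\ker(\pi_b)\subseteq\ker(\pi_d)$. Intersecting with $G_c$ gives $N_a\subseteq N_d$ and $N_b\subseteq N_d$. Moreover each of $\ker(\pi_a),\ker(\pi_b)$ is normal in $\GL_2(\zZ{c})$, so $N_a=\ker(\pi_a)\cap G_c$ and $N_b=\ker(\pi_b)\cap G_c$ are normal in $G_c$; consequently $N_aN_b$ is a subgroup of $G_c$ and $\langle N_a,N_b\rangle = N_aN_b\subseteq N_d$.

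Next I would unwind the abuse of notation in $\pi_a^{-1}(\pi_a(N_b))$. Here $\pi_a$ denotes the restriction to $G_c$, whose kernel is exactly $N_a$. For any group homomorphism $f$ with kernel $K$ and any subgroup $H$ of the source one has $f^{-1}(f(H))=HK$; applying this with $f=\pi_a|_{G_c}$, $H=N_b$, $K=N_a$ yields $\pi_a^{-1}(\pi_a(N_b))=N_bN_a=\langle N_a,N_b\rangle$, and the symmetric computation gives $\pi_b^{-1}(\pi_b(N_a))=N_aN_b=\langle N_a,N_b\rangle$. Whether the preimage is formed inside $G_c$ or inside $N_d$ is immaterial, since $\langle N_a,N_b\rangle\subseteq N_d$ (and the kernel of $\pi_a|_{N_d}$ is again $N_d\cap\ker(\pi_a)=N_a$ because $N_a\subseteq N_d$).

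Finally I would assemble the pieces. Both subgroups $\pi_a^{-1}(\pi_a(N_b))$ and $\pi_b^{-1}(\pi_b(N_a))$ coincide with $\langle N_a,N_b\rangle$, so the two quotients in the lemma are both equal to $N_d/\langle N_a,N_b\rangle$ and are in particular isomorphic. This common quotient is nontrivial precisely when $\langle N_a,N_b\rangle\subsetneq N_d$, which is the defining condition for $G$ to represent an $(a,b)$-entanglement, giving the biconditional. The one point demanding care — and the only place the argument could slip — is the bookkeeping in the middle two paragraphs: checking $N_a,N_b\subseteq N_d$, that both are normal in $G_c$ so that $\langle N_a,N_b\rangle=N_aN_b$, and that the preimage of the image equals the product with the kernel. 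Everything else is formal.
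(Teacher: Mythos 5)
Your proposal is correct and follows the same route as the paper: the paper's one-line proof simply asserts the key equality $\pi_a^{-1}(\pi_a(N_b)) = \pi_b^{-1}(\pi_b(N_a)) = \langle N_a,N_b\rangle$, and your argument supplies exactly the verification of that equality (via $N_a,N_b\trianglelefteq G_c$, $N_a,N_b\subseteq N_d$, and the identity $f^{-1}(f(H))=HK$ for $K=\ker f$) before concluding as the paper does. No gaps.
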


\begin{proof}
The equivalence follows from the equality $\pi_a^{-1}(\pi_a(N_b)) = \pi_b^{-1}(\pi_b(N_a)) = \langle N_a,N_b\rangle$. 
\end{proof}


\begin{remark}
As it will be useful later on, we define 
\[
N_{a,b}(G) := \pi_a^{-1}(\pi_a(N_b)) = \langle N_a,N_b\rangle = \pi_b^{-1}(\pi_b(N_a)). 
\]
In our computations, we keep track of this group as it will allow us to distinguish entanglements (see~ Example \ref{ex:NabG} for more details).

Definition \ref{def:representsentanglement} and the equivalent condition from Lemma \ref{lemma:equivdefinitions} represent different perspectives on the concept of groups representing an entanglement. The main definition is motivated by the number theory and clearly reflects the entanglement of the division fields of elliptic curves. The notion from Lemma \ref{lemma:equivdefinitions} takes a more group theoretic perspective, by using Goursat's lemma (\cite[page 75]{Lang:algebra} or \cite{Goursat}) to detect if there is an entanglement and the entanglement's type. 

To see how this comes from Goursat's lemma, one simply needs to consider the group $G_c$ as a subgroup of $G_a\times G_b$ by using the injection $g\mapsto (\pi_a(g),\pi_b(g))$. From this perspective, $G_c$ satisfies all the conditions necessary to apply Goursat's lemma which says that there are (constructible) $M_a \triangleleft G_a$ and $M_b\triangleleft G_b$  such that the image of $G_c$ in $G_a/M_a\times G_b/M_b$ is the graph of an isomorphism $G_a/M_a\simeq G_b/M_b$. In this context, the groups $M_a$ and $M_b$ are exactly $\pi_a^{-1}(\pi_a(N_b))$ and $\pi_b^{-1}(\pi_b(N_a))$, respectively. 

For further instances of the relationship between Goursat's lemma and entanglements, we refer the reader to \cite{brau3in2} and \cite[Lemma 8.2]{morrow2017composite}. 
\end{remark}

Definition \ref{def:representsentanglement} and Lemma \ref{lemma:equivdefinitions} provide us with a group theoretic way to describe entanglements.  
As mentioned in Section \ref{sec:Intro}, we are interested in studying when certain entanglements appear infinitely often. 
In order to make this question tractable, we need 
a definition which captures the notion of "maximal'' entanglements. 
The notion is subtle to define because we need to simultaneously capture when the entanglement is happening at the lowest possible level and when it has the largest possible type. 

Our definition reads as follows. 

%

\begin{defn}\label{defn:primitiveentanglement}
Consider the set 
\[
\mathcal{T}_G = \{ ( (a,b), H) \, | \, G \text{ represents an $(a,b)$-entanglement of type $H$} \}.
\] 
We define a relation on $\mathcal{T}_G$ by declaring that $ ((a_1,b_1), H_1) \leq ( (a_2,b_2), H_2 )$ if:
\begin{enumerate}
\item $H_1$ and $H_2$ are isomorphic and either $(a_2 \mid a_1$ and $b_2\mid b_1)$ or  $(b_2 \mid a_1$ and $a_2 \mid b_1)$, or
\item $H_1$ is isomorphic to a quotient of $H_2$ and either $(a_1 \mid a_2$ and $b_1 \mid b_2)$ or $(b_1 \mid a_2$ and $a_1 \mid b_2)$.
\end{enumerate}
We say the group $G$ represents a \cdef{primitive $(a,b)$-entanglement of type $H$} if $( (a,b), H )$ is the unique maximal element of $\mathcal{T}_G$ and $n = \lcm(a,b)$. 
\end{defn}

\begin{remark}[How to think of primitive entanglements]
Primitive $(a,b)$-entanglements should be thought of as fundamental building blocks of entanglements, and when $G$ represents a primitive $(a,b)$-entanglement of type $H$, one should think that $H$ is the largest type of entanglement that $G$ represents and $(a,b)$ is the lowest level that it occurs. 
This definition does not consider subgroups of $\GL_2(\zZ{n})$ to be primitive entanglements if they are the pre-images of entanglements of lower level or if they are constructed via fibered products of groups of smaller level. 
\end{remark}

\begin{remark}
\begin{enumerate}
\item []
\item The relation on $\mathcal{T}_G$ is not a partial order as it fails to be transitive; however, the relation is reflexive and anti-symmetric.  
\item If $\mathcal{T}_G$ does not have a maximal element with respect to $\leq$, then $G$ does not represent a primitive $(a,b)$-entanglement. 
\end{enumerate}
\end{remark}

To get a sense of the relation $\leq$, consider the following example. 

\begin{example}
Let $n = 420 = 2^2\cdot 3 \cdot 5 \cdot 7$ and let $G$ be a subgroup of $\GL_2(\zZ{n})$. 
\begin{itemize}
\item If $((2,3),\zZ{2})$ and $((4,3),\zZ{2})$ are in $\mathcal{T}_G$, then condition 1 in Definition \ref{defn:primitiveentanglement} says that $((4,3),\zZ{2}) \leq ((2,3),\zZ{2})$. 
\item If $((2,3),\zZ{2})$ and $((4,3),\zZ{4})$ are in $\mathcal{T}_G$, then condition 2 in Definition \ref{defn:primitiveentanglement} says that $((2,3),\zZ{2}) \leq ((4,3),\zZ{4})$. 
\item If $((2,3),\zZ{2} )$ and  $((5,7), \zZ{4})$ are in $\mathcal{T}_G$, then Definition \ref{defn:primitiveentanglement} says that these two pairs are incomparable.
\end{itemize}
\end{example}

For $p, q$ distinct primes, one can immediately see that any subgroup $G$ of $\GL_2(\zZ{pq})$ representing a $(p,q)$-entanglement must in fact be primitive. 

\begin{lemma}
If $G \subseteq \GL_2(\zZ{pq})$ represents a $(p,q)$-entanglement of type $H$, then $G$ represents a primitive $(p,q)$-entanglement of type $H$.
\end{lemma}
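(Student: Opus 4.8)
The plan is to show that the hypotheses force $\mathcal{T}_G$ to collapse to the single element $((p,q),H)$, after which primitivity is essentially immediate: a one-element set automatically has a unique maximal element (using reflexivity of $\leq$, noted in the preceding remark), and the level requirement in Definition~\ref{defn:primitiveentanglement} is met because $\lcm(p,q) = pq = n$ when $p,q$ are distinct primes. So the whole content is the claim $\mathcal{T}_G = \{((p,q),H)\}$.

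The observation I would isolate first is that $G$ can represent an $(a,b)$-entanglement only when $a \nmid b$. Since $d = \gcd(a,b)$ divides both $a$ and $b$, the reduction $\pi_d$ factors through $\pi_a$ (as $d \mid a \mid c$ gives $\pi_d = r \circ \pi_a$ for the reduction $r\colon \GL_2(\zZ{a}) \to \GL_2(\zZ{d})$), and likewise through $\pi_b$; hence $N_a, N_b \subseteq N_d$ and so $\langle N_a, N_b\rangle \subseteq N_d$ in every case. If moreover $a \mid b$, then $d = a$, so $\pi_d = \pi_a$ and $N_d = N_a \subseteq \langle N_a, N_b\rangle$. Combined with the reverse inclusion this forces $\langle N_a, N_b\rangle = N_d$, i.e. no entanglement. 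I would then enumerate: the divisors of $n = pq$ are $1, p, q, pq$, and among all pairs $a < b$ of these the only one with $a \nmid b$ is $(p,q)$ (every other smaller divisor divides the larger, since $1$ divides everything and $p,q \mid pq$). By the observation, any element of $\mathcal{T}_G$ must have first coordinate $(p,q)$; and since the type attached to a fixed pair is determined by $G$ as the isomorphism class of $N_d/\langle N_a, N_b\rangle$, there is at most one such element. The hypothesis that $G$ represents a $(p,q)$-entanglement of type $H$ supplies exactly this element, so $\mathcal{T}_G = \{((p,q),H)\}$.

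I do not anticipate a genuine obstacle: the argument is the bookkeeping lemma that $a \mid b$ kills the entanglement, together with the elementary fact that $(p,q)$ is the unique incomparable divisor pair of $pq$. The one point to state carefully is that a singleton $\mathcal{T}_G$ falls squarely under Definition~\ref{defn:primitiveentanglement}: the element $((p,q),H)$ is vacuously the unique maximal element of $\mathcal{T}_G$, and the required equality $n = \lcm(a,b)$ holds by the primality of $p$ and $q$.
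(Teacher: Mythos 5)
Your proposal is correct and takes essentially the same route as the paper: the paper's proof is the one-line observation that $\mathcal{T}_G$ is the singleton $\{((p,q),H)\}$, which is exactly the claim you establish. You simply supply the justification the paper leaves implicit (that $a \mid b$ forces $N_d = N_a \subseteq \langle N_a, N_b\rangle$ and hence no entanglement, so $(p,q)$ is the only admissible divisor pair of $pq$), and that detail is accurate.
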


\begin{proof}
This follows immediately as the set $\mathcal{T}_G$ is a single element, namely $((p,q),H)$. 
\end{proof}

With the notion of primitive firmly established, we turn our attention to defining two classes of entanglements, which were mentioned in Section \ref{sec:Intro}. 

\begin{defn}\label{def:explained}
The group $G$ represents an \cdef{explained $(a,b)$-entanglement of type $T$} if $G$ represents a primitive $(a,b)$-entanglement of type $T$ and 
\[
[(\zZ{c})^\times : \det(N_{a,b}(G))] = [G_c:N_{a,b}(G)].
\]
\end{defn}

\begin{remark}\label{rmk:altexplained}
Alternatively, one could define an explained entanglement as follows. 
Let $N$ be the kernel of $\det\colon G_c\to (\ZZ/c\ZZ)^\times$. Then we have that $G$ represents an {explained $(a,b)$-entanglement} if $N \subseteq \langle N_a, N_b \rangle$. 
\end{remark}

\begin{defn}\label{def:unexplained}
The group $G$ represents an \cdef{unexplained $(a,b)$-entanglement of type $T$} if $G$ represents a primitive $(a,b)$-entanglement of type $T$ and 
\[
[(\zZ{c})^\times : \det(N_{a,b}(G))] \neq [G_c:N_{a,b}(G)].
\]
\end{defn}

\begin{remark}\label{rem:unexplainedtype}
When a group $G$ represents an unexplained entanglement, it will be useful to describe its type as a pair $(T,T')$, where $G$ represents a primitive $(a,b)$-entanglement of type $T$ and $G' = G \cap \SL_2(\zZ{n})$ represents a primitive $(a,b)$-entanglement of type $T'$. 
With this notation, the group $T$ corresponds to the type of the total entanglement, and the group $T'$ corresponds to the type of the entanglement which is ``totally" unexplained. See Example \ref{exam:BJ} for further discussion. 
\end{remark}

\begin{remark}
All of the definitions in this section can be extended to open subgroups $G$ of $\GL_2(\widehat{\mZ})$ with surjective determinant. 
To do this, we first consider the level of the subgroup $G$, which is the the least positive integer $N$ such that $G$ is the inverse image of its image under the reduction map $\GL_2(\widehat{\mZ}) \to \GL_2(\zZ{N})$. 
If we let $G_N \subseteq \GL_2(\zZ{N})$ denote the image of $G$ under this reduction map and let $a< b$ be divisors of $N$, then we can say that $G\subseteq \GL_2(\widehat{\mZ})$ represents an $(a,b)$-entanglement when $G_N$ represents an $(a,b)$-entanglement, and we can similarly define the notions of primitive, explained, and unexplained entanglements. 
We avoid defining these notions for open subgroups $\GL_2(\widehat{\mZ})$ with surjective determinant in order to emphasize that our analysis and computations occur at finite level. 
\end{remark}

\section{\bf Entanglements for images of Galois associated to elliptic curves}
\label{sec:entanglementsellipticcurves}
In this section, we explain how the definitions from Section \ref{sec:grouptheoreticdefinitions} codify the entanglements mentioned in Section \ref{sec:Intro}. 

\subsection*{Notation}
Let $E/\mQ$ be an elliptic curve and for a positive integer $n\geq 2$, let $G = \Im(\rho_{E,n}) \subseteq \GL_2(\zZ{n})$ denote the mod $n$ image of Galois. 
Recall that the $n$-division field $\mQ(E[n])$ is the fixed field of $\overline{\mQ}$ by the kernel of the mod $n$ representation, and so the Galois group of this number field is the image of the mod $n$ representation.

We now recall the notation established in Section \ref{sec:grouptheoreticdefinitions}. 
Let $a < b$ be proper divisors of $n$, and let $d=\gcd(a,b)$ and $c = \lcm(a,b)$. 
Let $\pi_c\colon \GL_2(\zZ{n})\to \GL_2(\zZ{c})$, and set $G_c := \pi_c(G)$. 
We have the following reduction maps and normal subgroups of $G_c$
\begin{align*}
& \pi_a\colon \GL_2(\zZ{c}) \to \GL_2(\zZ{a}), \quad & N_a := \ker(\pi_a) \cap G_c\\
& \pi_b\colon \GL_2(\zZ{c}) \to \GL_2(\zZ{b}), \quad & N_b := \ker(\pi_b) \cap G_c\\
& \pi_d\colon \GL_2(\zZ{c}) \to \GL_2(\zZ{d}), \quad & N_d := \ker(\pi_d) \cap G_c.
\end{align*}
Again, we will abuse notation and denote restrictions of the above maps to subgroups of $\GL_2(\zZ{c})$ with $\pi_{a},\pi_b,$ and $\pi_d$.

We summarize the Galois correspondence between the $a,b,c,d$-division fields and the groups $N_a,N_b,G_c,N_d$ in Figure \ref{figure:Galoiscorrespondence}.
\begin{figure}[h!]
\[
\xymatrix@C=.1em{
 &\{I\}\ar@{-}[dl]\ar@{-}[dr] & & & && & & & &\mQ(E[c]) \ar@{-}[dl]\ar@{-}[dr]& \\
 N_a & &N_b&  &&& & & & \mQ(E[a]) & & \mQ(E[b])\\
 &\langle N_a, N_b \rangle\ar@{-}[ul]\ar@{-}[ur] & & &&& & & & &\mQ(E[a])\cap \mQ(E[b]) \ar@{-}[ul]\ar@{-}[ur]& \\
 &N_d \ar@{-}[u] & & & &&& & & &\mQ(E[d])\ar@{-}[u] & \\
 & G_c\ar@{-}[u]& & & &&& & & & \QQ\ar@{-}[u]& \\
}
\]
\caption{Galois correspondence for various division fields}
\label{figure:Galoiscorrespondence}
\end{figure}
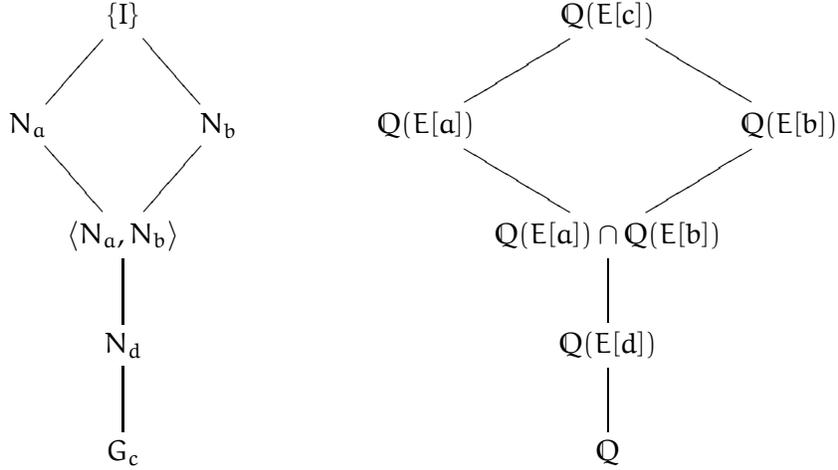

Recall our previous notion of entanglement from Definition \ref{defn:firstentanglement}. As a consequence of the Galois correspondence and our definitions from Section \ref{sec:grouptheoreticdefinitions}, we have the following lemma.

\begin{lemma}\label{lem:connect_groups_curves}
Let $E/\QQ$ be an elliptic curve, and let $a<b$ be positive integers. The group $\Im(\rho_{E,ab})$ represents an $(a,b)$-entanglement of type $T$ if and only if $E$ has an $(a,b)$-entanglement of type $T$. 
\end{lemma}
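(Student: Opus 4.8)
The plan is to unwind both sides of the claimed equivalence through the Galois correspondence summarized in Figure \ref{figure:Galoiscorrespondence}, reducing everything to a single statement about the index of a normal subgroup. Set $G = \Im(\rho_{E,ab})$, and with $a<b$ positive integers put $c=\lcm(a,b)$ and $d=\gcd(a,b)$; since $c \mid ab$ the group $G_c = \pi_c(G)$ is precisely $\Im(\rho_{E,c}) = \Gal(\mQ(E[c])/\mQ)$. First I would record the three dictionary entries that the Galois correspondence gives us: under the identification $G_c \cong \Gal(\mQ(E[c])/\mQ)$, the normal subgroup $N_a = \ker(\pi_a)\cap G_c$ is the subgroup fixing $\mQ(E[a])$, so its fixed field is $\mQ(E[a])$; likewise $N_b$ fixes $\mQ(E[b])$ and $N_d$ fixes $\mQ(E[d])$. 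This uses only that the mod-$a$ (resp. $b$, $d$) image of Galois is the Galois group of the corresponding division field, together with the standard fact that the kernel of reduction corresponds to the larger division field.

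Next I would translate the two field-theoretic objects appearing in Definition \ref{defn:firstentanglement}. The compositum $\mQ(E[a])\cdot\mQ(E[b]) = \mQ(E[c])$ corresponds to the intersection $N_a \cap N_b$, which is trivial (it equals $\{I\}$ at the top of the diagram). More importantly, the \emph{intersection} $\mQ(E[a])\cap\mQ(E[b])$ corresponds under the Galois correspondence to the subgroup generated by $N_a$ and $N_b$, that is, to $\langle N_a, N_b\rangle$: the fixed field of $\langle N_a,N_b\rangle$ is exactly the largest subfield contained in both $\mQ(E[a])$ and $\mQ(E[b])$. Since $N_d$ is a normal subgroup of $G_c$ containing both $N_a$ and $N_b$ (as $d\mid a$ and $d\mid b$), we get the tower $\langle N_a,N_b\rangle \subseteq N_d \subseteq G_c$, matching the left column of the figure, with fixed fields $\mQ(E[a])\cap\mQ(E[b]) \supseteq \mQ(E[d]) \supseteq \mQ$.

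With this dictionary in place, the proof becomes a formal comparison. By Galois theory, the inclusion $\mQ(E[d]) \subseteq \mQ(E[a])\cap\mQ(E[b])$ corresponds to the reverse inclusion of fixed groups $\langle N_a,N_b\rangle \subseteq N_d$, and the fields are \emph{equal} if and only if the groups are equal. Hence $E$ has an $(a,b)$-entanglement in the sense of Definition \ref{defn:firstentanglement}, i.e. $\mQ(E[a])\cap\mQ(E[b]) \neq \mQ(E[d])$, holds if and only if $\langle N_a,N_b\rangle \subsetneq N_d$, which is exactly the condition that $G$ represents an $(a,b)$-entanglement (Definition \ref{def:representsentanglement}). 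For the type, the Galois correspondence identifies $\Gal\bigl(\mQ(E[a])\cap\mQ(E[b])\,/\,\mQ(E[d])\bigr)$ with the quotient $N_d/\langle N_a,N_b\rangle$, since $\langle N_a,N_b\rangle \trianglelefteq N_d$ with $N_d$ fixing the smaller field; this is precisely the type $T$ in both definitions, so the types agree. Therefore the two notions of $(a,b)$-entanglement of type $T$ coincide.

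The routine steps are the index/fixed-field bookkeeping; the one place demanding care is justifying that the intersection $\mQ(E[a])\cap\mQ(E[b])$ corresponds to $\langle N_a,N_b\rangle$ and that this quotient is a well-defined Galois group giving the type. The subtlety is that $\mQ(E[a])$ and $\mQ(E[b])$ are the fixed fields of $N_a$ and $N_b$ \emph{inside} $\mQ(E[c])$, which is legitimate because $c=\lcm(a,b)$ so $\mQ(E[c])$ contains both; one must also note $N_d \trianglelefteq G_c$ so that $\mQ(E[d])/\mQ$ is Galois and the quotient $N_d/\langle N_a,N_b\rangle$ makes sense as the Galois group of $\mQ(E[a])\cap\mQ(E[b])$ over $\mQ(E[d])$. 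I expect this normality-and-intersection verification to be the main (though still standard) obstacle, after which the equivalence is immediate.
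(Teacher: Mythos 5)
Your proof is correct and follows exactly the route the paper intends: the paper states this lemma as an immediate consequence of the Galois correspondence summarized in Figure \ref{figure:Galoiscorrespondence}, and your argument simply makes that correspondence explicit (identifying $N_a$, $N_b$, $N_d$ with the subgroups fixing $\mQ(E[a])$, $\mQ(E[b])$, $\mQ(E[d])$ and $\langle N_a,N_b\rangle$ with the subgroup fixing the intersection). The normality observations you flag are handled correctly, so nothing further is needed.
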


Using Lemma \ref{lem:connect_groups_curves} and the notions from Section \ref{sec:grouptheoreticdefinitions}, we can define various type of entanglements for elliptic curves.

\begin{defn}\label{def:ellipticentanglement}
We say that an elliptic curve $E/\mQ$ has an \cdef{$(a,b)$-entanglement of type $T$} if for some $n\geq 2$ and proper divisors $a < b$, the mod $n$ image of Galois $\,\Im(\rho_{E,n})$ represents an $(a,b)$-entanglement of type $T$.
\end{defn}

\begin{defn}\label{def:ellipticexplained}
We say that an elliptic curve $E/\mQ$ has an \cdef{explained $(a,b)$-entanglement of type $T$} if for some $n\geq 2$ and proper divisors $a < b$, the mod $n$ image of Galois $\,\Im(\rho_{E,n})$ represents an explained $(a,b)$-entanglement of type $T$.
\end{defn}

\begin{remark}
We now explain how Definition \ref{def:ellipticexplained} encapsulates the explained entanglements we discussed in Section \ref{sec:Intro}. 
Let $E/\Q$ be an elliptic curve and let $G$ be the mod $n$ image of Galois. 
Notice that the group $N $ in Remark \ref{rmk:altexplained} correspond to groups with fixed field $\mQ(\zeta_c)$. 
Indeed, since $\det\circ\rho_{E,c}\colon \Gal(\Qbar/\QQ)\to(\ZZ/c\ZZ)^\times$, we have that $\QQ(\zeta_c)$ is fixed by the elements with determinant 1.
Therefore, if $G$ represents an explained $(a,b)$-entanglement, we have that the intersection of $\Q(E[a])\cap\Q(E[b])$ is the compositum of $\Q(E[a])\cap\Q(\zeta_b)$ and $\Q(E[b])\cap\Q(\zeta_a)$. 
In this case, the entanglement is explained by the Kronecker--Weber theorem and the Weil pairing.  
\end{remark}

\begin{defn}\label{def:elliptic_unexplained}
We say that an elliptic curve $E/\mQ$ has an \cdef{unexplained $(a,b)$-entanglement of type $T$} if for some $n\geq 2$ and proper divisors $a < b$, the mod $n$ image of Galois $\,\Im(\rho_{E,n})$ represents an unexplained $(a,b)$-entanglement of type $T$.
\end{defn}

To conclude this section, we discuss several examples of unexplained entanglements.  

\begin{example}
The elliptic curve from Example \ref{exam:firstunexplainedentanglement} has an unexplained $(2,3)$-entanglement of type $\zZ{2}$. 
One could say that this elliptic curve has a ``totally" unexplained $(2,3)$-entanglement of type $(\zZ{2},\zZ{2})$ (cf.~Remark \ref{rem:unexplainedtype}) as no part of the entanglement is lost when intersecting the mod $6$ image of Galois with $\SL_2(\zZ{6})$. 
More precisely, the groups $\Im(\rho_{E,6})$ and $\Im(\rho_{E,6}) \cap \SL_2(\zZ{6})$ both represent the same $(2,3)$-entanglement of type $\zZ{2}$. 
\end{example}

\begin{example}\label{exam:BJ}
By the Hilbert irreducibility theorem, most of the members of the family of elliptic curves from Brau--Jones \cite{brau3in2} have an unexplained $(2,3)$-entanglement of type $(S_3,\zZ{3})$. 
Indeed, the authors classify elliptic curves $E/\mQ$ satisfying $\mQ(E[2]) \subseteq \mQ(\zeta_3,\Delta_E^{1/3}) \subseteq \mQ(E[3])$. 
When $3$ is not an exceptional prime for $E$, there is only one quadratic subfield of $\mQ(E[3]$), namely $\mQ(\sqrt{-3})$, and when $2$ is not exceptional for $E$ and $\mQ(E[2]) \subseteq \mQ(E[3])$, it must be that $\Delta_E = -3t^2$ for some $t\in \mQ^\times$. In this case, part of the $(2,3)$-entanglement is ``explained"  by $\mQ(\sqrt{-3})$, and so $E$ has an unexplained $(2,3)$-entanglement of type $(S_3,\zZ{3})$. 
\end{example}

\begin{example}\label{Ex:MaxGroups}
The purpose of this example is to illustrate why we are considering only the maximal groups (partially ordered by containment, up to conjugation) representing a primitive entanglement of a given level and type. We will show how these maximal groups capture all of the information that is {essential} to understanding the moduli space of elliptic curves with entanglements of this kind and give anecdotal justification for {\sc Step 2} in our proof of Theorem \ref{thm:main}. 

To this end, we search for all the the admissible groups $G\subseteq\GL_2(\ZZ/6\ZZ)$ that represent a $(2,3)$-entanglement of type $(S_3,\zZ{3})$. Our search yield four such groups
\begin{align*}
G_1 &=\left\langle 
\begin{pmatrix}
    5& 1\\
    4& 3
\end{pmatrix},
\begin{pmatrix}
    4& 1\\
    1& 0
\end{pmatrix}
\right\rangle, \phantom{\hbox{a}}
G_2 =\left\langle  
\begin{pmatrix}
    4& 1\\
    5& 3
\end{pmatrix},
\begin{pmatrix}
    2& 3\\
    1& 4
\end{pmatrix}
\right\rangle,  \\
G_3 &=\left\langle  
\begin{pmatrix}
    2& 5\\
    1& 3
\end{pmatrix},
\begin{pmatrix}
    4& 3\\
    5& 2
\end{pmatrix}
\right\rangle,  \phantom{\hbox{a}}
G_4 =\left\langle
\begin{pmatrix}  
    2& 5\\
    1& 3
\end{pmatrix},
\begin{pmatrix}
    1& 1\\
    0& 5
\end{pmatrix}
\right\rangle,
\end{align*}
which have size 48, 12, 6, and 6 respectively. Further, for each $i$, the modular curve $X_{G_i}$ is a $\mP^1$.

Up to conjugation, $G_1$ contains the other three groups; in fact, $G_2 = \langle-I,G_3 \rangle = \langle-I,G_4 \rangle$ and $G_2 = G_1 \cap \pi_3^{-1}(3{\texttt{B}})$, where $\pi_3$ is the map $\pi_3\colon \GL_2(\ZZ/6\ZZ)\to \GL_2(\ZZ/3\ZZ)$. 
In particular, we see that the rational points on $X_{G_2}$ correspond to elliptic curves with mod 6 image representing a $(2,3)$-entanglement of type $(S_3,\zZ{3})$ (since $G_2 \subseteq G_1$) \emph{and} with a 3-isogeny (since $G_2 \subseteq \pi_3^{-1}(3{\texttt{B}})$). The groups $G_3$ and $G_4$ then occur as particular twists inside each of the $\Qbar$-isomorphism classes of elliptic curves with image in $G_2$. 

On the other hand, the modular curve $X_{G_1}$ parametrizes $\Qbar$-isomorphism classes of elliptic curves with mod 6 image of Galois \emph{contained} in $G_1$, and so if $E/\QQ$ is an elliptic curve with a $(2,3)$-entanglement of type $(S_3,\zZ{3})$, then $E$ corresponds to a rational point on $X_{G_1}$ regardless of whether or not the elliptic curve has any additional algebraic structure. 
Therefore, the modular curve $X_{G_1}$ is the fundamental object as far as $(2,3)$-entanglements of $(S_3,\zZ{3})$-type is concerned. Once $X_{G_1}$ is computed, to understand the finer question of what other structures can occur along with an entanglement, one can use fibered products of modular curves (cf.~\cite[Section 8.4]{morrow2017composite}). 

\end{example}

\begin{example}\label{ex:NabG}
Before proceeding, we provide an example illustrating the way in which we can obtain information about an entanglement group from knowing the group $N_{a,b}(G)$.

Consider the two subgroups of $\GL_2(\ZZ/6\ZZ)$ given by
\[
\begin{array}{lcr}
G_1 = \left\langle \begin{pmatrix} 2&5 \\3 &2 \end{pmatrix},\begin{pmatrix}1 & 3\\3 &2 \end{pmatrix}\right\rangle  & \text {and } & 
G_2 = \left\langle \begin{pmatrix} 5&5 \\ 0& 5\end{pmatrix},\begin{pmatrix} 2& 5\\3 &2 \end{pmatrix},\begin{pmatrix} 2&1 \\3 &1 \end{pmatrix}  \right\rangle.
\end{array}
\]
These are two of the five maximal groups in the set of all level 6 groups representing an unexplained $(2,3)$-entanglement of type $\ZZ/2\ZZ$. That is, they are not conjugate and there are no level 6 groups that represent an $(2,3)$-entanglement of type $\ZZ/2\ZZ$ that contain either of these groups, up to conjugation. Both of these groups also have the property that their mod 2 image is full and their image mod 3 is conjugate to $\texttt{3B}$\footnote{All of the remaining three maximal groups  have full image mod 2. One of the groups has image conjugate to $\texttt{3Ns}$ mod 3, and the remaining two group have mod 3 image conjugate to $\texttt{3Nn}$ and can be found in Example \ref{exam:Nn3}.}. For each group we compute
\[
\begin{array}{lcr}
N_{2,3}(G_1) = \left\langle \begin{pmatrix}1 &2 \\0 & 1\end{pmatrix},\begin{pmatrix} 1&0 \\ 0& 5\end{pmatrix},\begin{pmatrix} 4&3 \\ 3&1 \end{pmatrix}   \right\rangle & \hbox{ and } & N_{2,3}(G_2) = \left\langle \begin{pmatrix} 1&2 \\0 &1 \end{pmatrix},\begin{pmatrix} 5&0 \\0 &1 \end{pmatrix},\begin{pmatrix} 1& 3\\ 3& 4\end{pmatrix} \right\rangle.
\end{array}
\]

Inspecting these groups, we see that
\[
\begin{array}{lcr}
\pi_3(N_{2,3}(G_1)) = \left\langle \begin{pmatrix} 1& 0\\0 &2 \end{pmatrix},\begin{pmatrix} 1& 1\\ 0& 1\end{pmatrix} \right\rangle & \hbox{ and } & \pi_3(N_{2,3}(G_2)) = \left\langle \begin{pmatrix} 2&0 \\ 0& 1\end{pmatrix},\begin{pmatrix} 1&1 \\ 0&1 \end{pmatrix} \right\rangle.
\end{array}
\]
Thus, if $E$ is an elliptic curve with $\Im(\rho_{E,6}) = G_1$, we know that the intersection between $\QQ(E[2])$ and $\QQ(E[3])$ is the fixed field of $\pi_3(N_{2,3}(G_1))$, which is exactly the field of definition of the 3-isogeny that $E$ must have (since $\pi_3(N_{2,3}(G_1))$ only has 1's in the upper left hand corner). On the other hand, if $\Im(\rho_{E,6}) = G_2$, then the intersection occurs between the other quadratic field in $\QQ(E[3])$ that is not $\QQ(\sqrt{-3})$. It is worth pointing out here that we know that neither of these groups fix $\QQ(\zeta_3)$ since $\det(\pi_3(N_{2,3}(G_i))) = (\ZZ/3\ZZ)^\times$ for $i = 1,2$.

\end{example}

\section{\bf Computations of modular curves}\label{sec:computataions}
\label{sec:computationsmodularcurves}
In this section, we begin our classification of unexplained $(p,q)$-entanglements which occur for infinitely many isomorphism classes of elliptic curves where $p\neq q$ are primes. 
To begin, we show that there are only finitely many groups representing such entanglements. 

\begin{lemma}\label{lemma:levelpq}
Suppose that $G\subseteq \GL_2(\ZZ/pq\ZZ)$ represents an {unexplained} $(p,q)$-entanglement, and let $H = G\cap\SL_2(\ZZ/pq\ZZ)$. 
Then, there exists a congruence subgroup $\Gamma\subseteq\SL_2(\mZ)$ of level $pq$ with $\pi_{pq}(\Gamma) = H$. 
\end{lemma}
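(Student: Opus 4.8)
The plan is to exhibit the explicit candidate
$\Gamma := \{\gamma \in \SL_2(\ZZ) : \pi_{pq}(\gamma) \in H\}$,
the full preimage of $H$ under reduction $\pi_{pq}\colon \SL_2(\ZZ)\to \SL_2(\zZ{pq})$, and to argue that it has level exactly $pq$. Since $\pi_{pq}$ is surjective and $H\subseteq \SL_2(\zZ{pq})$ we get $\pi_{pq}(\Gamma)=H$ at once, and $\Gamma\supseteq\Gamma(pq)$ shows that $\Gamma$ is a congruence subgroup whose level divides $pq$. As $p\ne q$ are prime, that level lies in $\{1,p,q,pq\}$, so the entire content is to rule out the three proper divisors.

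First I would translate the level condition into the group $H$ via the Chinese remainder isomorphism $\SL_2(\zZ{pq})\cong \SL_2(\zZ{p})\times \SL_2(\zZ{q})$. Using surjectivity of $\SL_2(\ZZ)\to\SL_2(\zZ{pq})$ one checks that $\pi_{pq}(\Gamma(q)) = \SL_2(\zZ{p})\times\{I\}$ and $\pi_{pq}(\Gamma(p)) = \{I\}\times \SL_2(\zZ{q})$. Hence $\Gamma(q)\subseteq\Gamma$ (the level divides $q$) if and only if $\SL_2(\zZ{p})\times\{I\}\subseteq H$, and symmetrically for $p$; the level-one case is the further degeneration $H=\SL_2(\zZ{pq})$. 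Thus it suffices to prove that $H$ contains neither $\SL_2(\zZ{p})\times\{I\}$ nor $\{I\}\times\SL_2(\zZ{q})$, so that $\Gamma(p)\not\subseteq\Gamma$ and $\Gamma(q)\not\subseteq\Gamma$ force the level up to $pq$.

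This is exactly where the unexplained hypothesis enters. Writing $M_p = \pi_p(N_q)$ and $M_q = \pi_q(N_p)$ as in Goursat's lemma, the CRT decomposition gives $N_p = \{I\}\times M_q$ and $N_q = M_p\times\{I\}$, so that $N_{p,q}(G)=\langle N_p,N_q\rangle = M_p\times M_q$ and $G/(M_p\times M_q)$ is the graph of an isomorphism $G_p/M_p\simeq G_q/M_q$. Since here $c=pq=n$, the subgroup $N=\ker(\det\colon G\to(\zZ{pq})^\times)$ of Remark \ref{rmk:altexplained} is precisely $H=G\cap\SL_2(\zZ{pq})$, and by that remark the unexplained condition of Definition \ref{def:unexplained} is equivalent to $H\not\subseteq M_p\times M_q$. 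I would then argue by contraposition: if, say, $\SL_2(\zZ{p})\times\{I\}\subseteq H\subseteq G$, then the Goursat description forces $\SL_2(\zZ{p})\subseteq M_p$; consequently every $(x,y)\in H$ has $\det x=1$, so $x\in \SL_2(\zZ{p})\subseteq M_p$, and being in the identity coset of $G_p/M_p$ forces $y\in M_q$. Hence $H\subseteq M_p\times M_q$, contradicting unexplainedness. The symmetric argument rules out $\{I\}\times\SL_2(\zZ{q})\subseteq H$, and the level is therefore $pq$.

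The routine parts are the congruence-subgroup bookkeeping (surjectivity of reduction and the images of $\Gamma(p)$, $\Gamma(q)$). The step carrying the real weight is the identification $N=H$ together with the Goursat computation $\langle N_p,N_q\rangle=M_p\times M_q$, since it is this that converts the index inequality of Definition \ref{def:unexplained} into the clean statement that $H$ cannot contain a full $\SL_2$-factor. I expect the main subtlety to be keeping the roles of the two primes straight under the CRT identification and confirming the precise dictionary between ``the level drops to $q$'' and ``$H$ contains $\SL_2(\zZ{p})\times\{I\}$''.
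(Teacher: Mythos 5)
Your proof is correct and follows essentially the same route as the paper's: both reduce the claim to showing that the $\SL_2$-level of $H$ is exactly $pq$, and both rule out the proper divisors by observing that a smaller level would force a full factor $\SL_2(\zZ{p})\times\{I\}$ or $\{I\}\times\SL_2(\zZ{q})$ into $H$, which (via $N_{p,q}(G)=M_p\times M_q$ and Remark \ref{rmk:altexplained}) would make the entanglement explained. Your Goursat bookkeeping just makes explicit the step the paper phrases as ``the entanglement field must be a subfield of $\mQ(\zeta_q)$.''
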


\begin{proof}
Since the $\SL_2$-level of $G$ (i.e., the level of $H$) divides the $\GL_2$-level, it suffices to show that the $\SL_2$-level of $G$ is does not divide $p$ or $q$.
Since $G$ represents an unexplained entanglement, we have that $H $ is not all of $\SL_2(\ZZ/pq\ZZ)$, and hence the $\SL_2$-level of $G$ is not 1. 
Suppose for the sake of contradiction that the $\SL_2$-level of $G$ equal $p$. 
Then, $H = S(p) \times \SL_2(\ZZ/q\ZZ)$, and it follows that $\SL_2(\ZZ/q\ZZ) \subset N_q$ (the kernel of the restriction map coming from the $q$ side in the fibered product implicit in $G \subset \GL_2(\ZZ/p\ZZ) \times \GL_2(\ZZ/q\ZZ)$). This implies that the entanglement field must be a subfield of $\mQ(\zeta_q)$, which contradicts our initial assumption that $G$ represents an unexplained entanglement. 
A similar proof also shows that the $\SL_2$-level of $G$ cannot equal $q$, and therefore the $\SL_2$-level of $G$ is equal to $pq$, as desired.
\end{proof}

\begin{prop}\label{prop:finite_search}
There are only finitely many admissible subgroups $G\subseteq \GL_2(\ZZ/pq\ZZ)$ representing a unexplained $(p,q)$-entanglement that have genus $\leq 1$ as $p$ and $q$ vary over pairs of distinct primes. 
\end{prop}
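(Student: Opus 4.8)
The plan is to convert the infinitude question into a bound on the level $pq$, and then invoke the finiteness of the ambient group at each such level. The two inputs that do all the work are Lemma~\ref{lemma:levelpq}, which pins down the $\SL_2$-level of $G$, and the fact that the geometric genus of $X_G$ depends only on the $\SL_2$-part $H := G\cap\SL_2(\zZ{pq})$, together with the finiteness of the list $\mathscr{L}$ recorded in {\sc Step 0}.

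First I would apply Lemma~\ref{lemma:levelpq}: since $G$ represents an unexplained $(p,q)$-entanglement, there is a congruence subgroup $\Gamma\subseteq\SL_2(\mZ)$ of level exactly $pq$ with $\pi_{pq}(\Gamma)=H$. Because a congruence subgroup of level $pq$ contains $\Gamma(pq)=\ker(\pi_{pq})$, this $\Gamma$ is precisely the group $\Gamma_G=\pi_{pq}^{-1}(H)$ from Subsection~\ref{subsec:Siegel}. As recalled there, $X_{\Gamma_G}$ and $X_G$ are isomorphic over $\mQ(\zeta_{pq})$, and hence have the same geometric genus; concretely, since $-I$ acts trivially on $\mathbb{H}$, the Riemann surface underlying $X_G$ is $\Gamma\backslash\mathbb{H}^{*}$ irrespective of whether $-I\in G$. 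Consequently, if $X_G$ has genus $\leq 1$, then $\Gamma$ is a congruence subgroup of level $pq$ whose modular curve has genus $0$ or $1$.

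By the definition of $\mathscr{L}$ in {\sc Step 0}, this forces $pq\in\mathscr{L}$, and the finiteness theorem of Cox--Parry \cite{CoxParry}, made explicit by Cummins--Pauli \cite{CumminsPauli}, tells us $\mathscr{L}$ is a finite set of integers; thus only finitely many pairs of distinct primes $(p,q)$ can arise. For each admissible product $pq$, the group $\GL_2(\zZ{pq})$ is finite and so has only finitely many subgroups $G$, and taking the union over the finitely many such $pq$ yields the claim. There is no serious obstacle in this argument once Lemma~\ref{lemma:levelpq} and the finiteness of $\mathscr{L}$ are in hand; the only point I would state with care is that the geometric genus is a genuine $\SL_2$-invariant, insensitive both to the determinant-enlargement and to whether $-I\in G$, so that passing from $X_G$ to $X_\Gamma$ does not change the genus.
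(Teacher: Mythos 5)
Your proposal is correct and follows essentially the same route as the paper: invoke Lemma~\ref{lemma:levelpq} to produce a congruence subgroup $\Gamma$ of level exactly $pq$ with $\pi_{pq}(\Gamma)=H$ and the same genus as $X_G$, bound the level via Cox--Parry (made explicit by Cummins--Pauli), and conclude by finiteness of $\GL_2(\zZ{pq})$ at each of the finitely many admissible levels. The extra care you take in justifying that the genus is an $\SL_2$-invariant, insensitive to $-I$ and to the determinant, is a reasonable elaboration of a step the paper leaves implicit.
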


\begin{proof}
Suppose that $G$ is an admissible subgroup of $\GL_2(\ZZ/pq\ZZ)$ which represents an unexplained $(p,q)$-entanglement with genus less than or equal to 1. 
Since $G$ represents an \emph{unexplained} entanglement, Lemma \ref{lemma:levelpq} tells us that  there exists a congruence subgroup $\Gamma\subseteq\SL_2(\mZ)$ of level $pq$ with $\pi_{pq}(\Gamma) = H$ such that the genus of the modular curve $X_\Gamma$ is less than or equal to 1. 
By Cox--Parry \cite{CoxParry}, there is an upper bound on the level of such groups $H$ (i.e., the product $pq$ is bounded) and by Cummins--Pauli \cite{CumminsPauli}, we have a complete list of possible $H$. 
Since the possible levels are bounded and at each level there are finitely many possible admissible subgroups, the result follows. 
\end{proof}

With Proposition \ref{prop:finite_search} and \cite{CoxParry, CumminsPauli}, we now know that if $G$ represents an unexplained $(p,q)$-entanglement and the genus of $X_G$ is at most $ 1$, then $G$ has level less than or equal to $39$.
Therefore, in order to prove Theorem \ref{thm:main}, we only need to search the levels $N = p\cdot q$ where $N\leq 39$. 

\begin{remark}
Of course, we could attempt to classify unexplained $(p^n,q^m)$-entanglements, however many of the computational aspects become very complicated and tedious (cf.~Remark \ref{rem:restrict}).
One other subtlety with classifying unexplained $(p^n,q^m)$-entanglements is that the group $G\cap \SL_2(\zZ{p^nq^m})$ may be the preimage under the reduction map of a subgroup $\SL_2(\zZ{a})$ where $a\mid p^nq^m$. We refer the reader to \cite[Section 3]{JonesM:Nonabelianentanglements} for further discussion.

At this time, we also note that by Definitions \ref{defn:firstentanglement} and \ref{def:ellipticentanglement} there cannot exist a \textit{non-trivial} $(p^n,p^m)$-entanglement where $1 \leq n\leq m$. 
When investigating $p^n$- and $p^m$-division fields, entanglements are not the correct relationship to study, rather one should ask when $\mQ(E[p^n]) = \mQ(E[p^m])$. 
This question has been considered by Rouse--Zureick-Brown \cite[Remark 1.6]{rouse2014elliptic} and Lozano-Robledo and the first author \cite[Theorem 1.4]{danielsLR:coincidences}. 
\end{remark}

For the remainder of the section, we provide examples of computing genus 0 modular curves associated to groups representing an unexplained entanglement. 
When the group contains $-I$, we use the Siegel functions method described in Subsection \ref{subsec:Siegel}, and for the curves whose groups do not have $-I$, we compute the models for these curves by hand using interesting relationships between them. 

\subsection{A genus zero example via Siegel functions}
The goal of this section is to carefully compute the $j$-map of one genus 0 modular curve using Siegel functions. 
Let $G$ be the genus 0, admissible subgroup of $\GL_2(\zZ{14})$ generated by
\[
G := \left\langle 
\begin{pmatrix}
10&7 \\ 5&11
\end{pmatrix},
\begin{pmatrix}
2&7 \\ 3&3
\end{pmatrix},
\begin{pmatrix}
5&7 \\ 9&12
\end{pmatrix}
\right\rangle 
\]
and let $H = G\cap\SL_2(\ZZ/14\ZZ).$

The group $G$ represents an unexplained $(2,7)$-entanglement of type $\ZZ/3\ZZ$. By computing the subgroup $N_{2,7}(G)$, we have that for a given elliptic curve $E/\Q$ with $\rho_{E,14}(G_{\mQ})$ conjugate to $G$, $\mQ(E[2]) \cap \mQ(E[7]) = \mQ(x(P))$ where $\langle P \rangle$ is the kernel of a $7$-isogeny $E\to E'$ and $x(P)$ corresponds to the $x$-coordinate of $P$. In other words, we see that the entanglement field is contained inside of the kernel of a $7$-isogeny.  

Recall the notation established in Subsection \ref{subsec:Siegel}. 
The first step is to compute the orbits of $H$ acting on $\mathcal{A}_{14}$. 
There are exactly 9 orbits that are represented by a unique element in the set $$S = \left\{
    \left( \frac{1}{14}, 0 \right),
    \left( \frac{1}{7}, 0 \right),
    \left( \frac{3}{14}, 0 \right),
    \left( \frac{5}{14}, 0 \right),
    \left( \frac{1}{2}, 0 \right),
    \left( 0, \frac{1}{14} \right),
    \left( \frac{1}{14}, \frac{1}{14} \right),
    \left( 0, \frac{1}{7} \right),
    \left( 0, \frac{3}{14} \right)
\right\}.$$
Computing the divisors for the product of Siegel functions as in Lemma \ref{lem:divSFuncs}, we see that if $\mathcal{O}$ is the orbit of $\mathcal{A}_{14}$ represented by $\left( \frac{3}{14}, 0 \right),$
then the function $g_{ \mathcal{O} }^{12\cdot 14}(\tau)$ has divisor $-12\cdot 14 P_1 + 12\cdot 14 P_2$, where $P_1$ is the point at infinity and $P_2$ is the cusp represented by the rational number 1. 
Therefore, by Lemma \ref{lem:haupt}, we have that 
$$h(\tau) = g_\mathcal{O}(\tau)$$
is a hauptmodul for the modular curve corresponding to the congruence subgroup associated to $H$. 
In this case, we have the following $q$-expansion
\begin{align*}
h &= q^{-1/14} -1 - q^{1/7} + q^{2/7} + q^{5/14} - q^{5/7} - q^{11/14} + q^{6/7} + 2q^{13/14} + O(q),
\end{align*}
where $q = e^{2\pi i \tau}$.

Next, we attempt to write the usual $j$-map's $q$-expansion as a rational function in the $q$-expansion of $h$ using linear algebra. 
A \texttt{Magma} computation shows that if $j(\tau)$ is the usual modular $j$-function and 
\[J_1(t) = \footnotesize \frac{(t^2 + 3t + 3)^3(t^6 + 7t^5 + 22t^4 + 47t^3 + 76t^2 + 75t + 31)
P_1(t)^3}{(t + 1)^{14}
(t + 2)^14
(t^3 + 4t^2 + 3t - 1)^2},\]
where 
\begin{small}
\[
P_1(t) = (t^{12} + 16t^{11} + 113t^{10} + 466t^9 + 1254t^8 + 2334t^7 + 3091t^6 + 2886t^5 + 1752t^4 + 532t^3 + 11t^2 + 24t + 37),
\]
\end{small}
then $J_1(h) = j(\tau)$ (i.e., that $J_1$ is the $j$-map for $X_H$). 
Moreover, we have that an elliptic curve $E$ over $\QQ(\zeta_{14})$ has mod 14 image of Galois contained in $H$ if and only if $j(E)$ is in $J_1(\QQ(\zeta_{14}))$. 

In order to recover the full group $G$, we need to calibrate our function $h$.
To do this, we compose $h$ with a fractional linear transformation that takes the pre-image of three known rational points to 0, 1, and $\infty$. For this example, we use the $j$-invariants 
\[
\frac{51181724570498001}{4}, \frac{5841700537729}{36}, \text{ and } \frac{16997034248155273645704721}{141745549885174404}
\]
which we found by hand. 

The result is a new hauptmodul, but its $q$-expansion and the corresponding new $j$-map are too messy to reproduce here. 
Using the methods in \cite{rouse2014elliptic}, we are able to find a nicer parameterization, which yields the $j$-map
$$J(t) = \frac{
(t^2 - t + 1)^3
(t^6 - 5t^5 + 12t^4 - 9t^3 + 2t^2 - t + 1)
P_2(t)^3}{
(t - 1)^2t^2(t^3 - 2t^2 - t + 1)
}$$
with 
\begin{small}
\[
P_2(t) =(t^{12} - 8t^{11} + 265t^{10} - 1474t^9 + 5046t^8 - 10050t^7 + 11263t^6 - 7206t^5 + 2880t^4 - 956t^3 + 243t^2 - 4t + 1).
\]
\end{small}
\vspace*{-1em}
\begin{remark}[Reality checks]
To check that the above $j$-map $J(t)$ is correct, we perform the following checks. 
Although, $J_1(t)$ and $J(t)$ parameterize different genus 0 curves over $\QQ$, we check that they give the same genus 0 curve when considered over $\QQ(\zeta_{14})$. 
Second, we define the generic elliptic curve $\mathcal{E}_t/\mQ(t)$ with $j$-invariant $J(t)$ and check that it has the correct entanglement. 
\end{remark}

\subsection{Genus zero examples without $-I$.}\label{subsec:constellation}
In this subsection, we will discuss some of the genus zero examples without $-I$.
To begin, we recall that in \cite[Section 5]{rouse2014elliptic}, the authors give a detailed discussion how to compute, for a genus zero subgroup $H$ without $-I$, a family of curves $E_t$ over an open subset $U\subseteq \mP^1$ such that an elliptic curve $E/\mQ$ without CM has image of Galois contained in a subgroup conjugate to $H$ if and only if there exists $ t \in U(\mQ)$ such that $E_t \simeq E$. 
In general, performing this computation boils down to finding the correct quadratic twist of the generic elliptic curve over $\mQ(t)$ with prescribed image of Galois, which can be quite computationally expensive.

In our situation, we are able to use relationships between entanglements (cf.~Examples \ref{exam:firstunexplainedentanglement} and \ref{exam:dual}) to form a constellation (see Figure \ref{fig:constellation}) between these groups, which greatly assists in determining their \textit{fine} moduli spaces. 
By a constellation, we mean that these unexplained entanglements can be connected through isogenies and twisting, which we will illustrate by an example. 

Consulting our search, we see that there are exactly 2 maximal groups of label \texttt{[GL2, 5B.4.1]} and two maximal groups of label \texttt{[GL2, 5B.4.2]}, and all four of these groups represent an unexplained $(2,5)$-entanglement of type $\zZ{2}$. 
Of the 2 groups with label \texttt{[GL2, 5B.4.1]}, we denote by $G_1$ the one that represents an entanglement which is contained inside of the kernel of the 5-isogeny (i.e., given an elliptic curve $E/\mQ$ with $\rho_{E,10}(G_{\mQ})$ conjugate to $G_1$, we have that $\mQ(E[2])\cap \mQ(E[5]) = \mQ(x(P))$ where $\langle P \rangle$ is the kernel of a 5-isogeny $E\to E'$ and $x(P)$ denotes the $x$-coordinate of $P$) and the other group with label \texttt{[GL2, 5B.4.1]} by $G_4$. 
We see that there is exactly one group, call it $G_2$, with label \texttt{[GL2, 5B.4.2]}, which is related to $G_1$ by Lemma \ref{lem:isogrep}. The remaining group with label \texttt{[GL2,5B.4.2]} is denoted by $G_3$. 

With this, we have that the 4 subgroups of $\GL_2(\ZZ/10\ZZ)$ that are of interest are
\begin{align*}
G_1 &=\left\langle\
\begin{pmatrix}
6 & 5\\ 7& 1
\end{pmatrix},
\begin{pmatrix}
7& 0\\ 9& 9
\end{pmatrix},
\begin{pmatrix}
4& 5\\ 5& 4
\end{pmatrix}
 \right\rangle,
\phantom{\ \ and\ \ }G_2 =\left\langle\
\begin{pmatrix}
1& 5\\ 1& 6
\end{pmatrix},
\begin{pmatrix}
1& 0\\ 0& 7
\end{pmatrix},
\begin{pmatrix}
4& 5\\ 5& 4
\end{pmatrix}
 \right\rangle,\\
G_3 &= \left\langle\
\begin{pmatrix}
1& 5\\ 1& 6
\end{pmatrix},
\begin{pmatrix}
1& 0\\ 5& 3
\end{pmatrix},
\begin{pmatrix}
4& 5\\ 5& 4
\end{pmatrix}
 \right\rangle, \phantom{\ \ and\ \ }
G_4 = \left\langle\
\begin{pmatrix}
6& 5\\ 7& 1
\end{pmatrix},
\begin{pmatrix}
8& 5\\ 7& 6
\end{pmatrix},
\begin{pmatrix}
9& 5\\ 0& 9
\end{pmatrix}
 \right\rangle.
\end{align*}

We can find elliptic curves over $\mQ$ whose mod 10 images of Galois corresponds to $G_i$ for $i = 1,\dots, 4$. 
Let $E_1$, $E_2$, $E_3$, and $E_4$ be the elliptic curves with Cremona labels \href{https://www.lmfdb.org/EllipticCurve/Q/193600hc1/}{\texttt{193600hc1}}, \href{https://www.lmfdb.org/EllipticCurve/Q/193600hc2/}{\texttt{193600hc2}}, \href{https://www.lmfdb.org/EllipticCurve/Q/38720l2}{\texttt{38720l2}}, and \href{https://www.lmfdb.org/EllipticCurve/Q/38720l1}{\texttt{38720l1}}, respectively. 
We start by observing from the Cremona labels that $E_1$ and $E_2$ (resp.~$E_3$ and $E_4$) are 5-isogenous. 
Looking closer, we see that $E_1$ and $E_4$ (resp.~$E_2$ and $E_3$) are related through twisting by 5. 
We also note that for each $i\in\{1,2,3,4\}$ the discriminant $\Delta_{E_i}$ of $E_i$ is congruent to $-110$ modulo rational squares, and so $\QQ(\sqrt{-110})\subseteq \QQ(E_i[2])$.

Next, we observe that $E_1$ has a point of order 5 defined over $\QQ(\sqrt{-110})$, namely the point $P_1 = (-1503,4400\sqrt{-110})$. 
From this and a \texttt{\texttt{Magma}} computation, we see that $E_1$ has a $(2,5)$-entanglement of type $\ZZ/2\ZZ$, no other entanglements of this level, and the common field between the 2- and 5-division field is contained inside the field of definition of the kernel of the 5-isogeny connecting $E_1$ and $E_2$. 
Here we have that $\rho_{E_1,5}$ has image contained in the group with Sutherland label \texttt{5B.4.1} while $E_2$ has related image \texttt{5B.4.2}.
As such, $E_2$ has a point $P_2$ of order 5 defined over a $\ZZ/4\ZZ$-extension of $\QQ$ that contains $\sqrt{5}$. 
Analyzing the 5-division field of $E_2$ further, we see that $\QQ(\sqrt{-110})$ is also in $\QQ(E_2[5])$ and since once again $\Delta_{E_2} \equiv -110 \bmod (\QQ^\times)^2$, $E_2$ also has a nontrivial $(2,5)$-entanglement of type $\zZ{2}$.

Twisting $E_2$ by $5$, we get the curve $E_3$. A simple computation shows that again $\QQ(\sqrt{-110}) \subseteq \QQ(E_3[5])$, but this time it is playing the role of the other quadratic subfield inside the division field. That is to say, if we fix bases for $E_2[5]$ and $E_3[5]$ that are compatible with twisting by 5, then the two quadratic extensions inside $\QQ(E_2[5])$ and $\QQ(E_3[5])$ that are not $\QQ(\sqrt{5})$, in this case $\QQ(\sqrt{-110})$ and $\QQ(\sqrt{-550})$, are each identified with an index two subgroup of the common group $\Im(\rho_{E_2,5}) \simeq \Im(\rho_{E_3,5})$. Since we chose compatible bases, we can compare which index two subgroup $\QQ(\sqrt{-110})$ corresponds to inside of the common group $\Im(\rho_{E_2,5})\simeq \Im(\rho_{E_3,5})$. The point is, these two curves have the same 5-division field, but the common subfields are identified with different subgroups of their (common) mod 5 image; this is the difference between the $(2,5)$-entanglements that $E_2$ and $E_3$ have. 
The group theoretic approach is sensitive enough to detect this and distinguish them.

The last curve to look at is $E_4$. In this case, $\Im(\rho_{E_4,5})$ is the group \texttt{5B.4.1}, but the point of order 5 defined over a quadratic field is $P_4 = (-1181 , 8800\sqrt{-22})$. Although we do not have that $\QQ(\sqrt{\Delta_{E_4}})$ is the same as the field of definition of this point, we do still have that $\QQ(\sqrt{-110})\subseteq \QQ(\sqrt{5},\sqrt{-22})\subseteq \QQ(E[5])$. Thus, this curve still exhibits a $(2,5)$-entanglement of type $\ZZ/2\ZZ$. Note that the mod 5 image of $E_4$ is the same as that of $E_1$, but the entanglement occurs between structurally different fields.

Bringing this together, we have that $\Im(\rho_{E_i,10}) = G_i$ for each $i\in\{1,2,3,4\}$ and the following diagrams:
\begin{figure}[h!]
\[
\begin{tikzcd}[
  column sep={6em,between origins},
  row sep={6em,between origins},
]
E_1 \arrow{r}{5-\text{isogenous}} \arrow{d}[swap]{\text{Twisting by }5} & E_2 \arrow{l}\arrow{d}{\text{Twisting by }5} & & G_1 \arrow{r}{5-\text{isogenous}} \arrow[equal]{d} & G_2 \arrow{l}\arrow[equal]{d} \\
E_4 \arrow{u} \arrow{r}{5-\text{isogenous}} & E_3 \arrow{l} \arrow{u} & &  G_4  \arrow{r}{5-\text{isogenous}} & G_3 \arrow{l} 
\end{tikzcd}
\]
\caption{Constellation of entanglements}
\label{fig:constellation}
\end{figure}
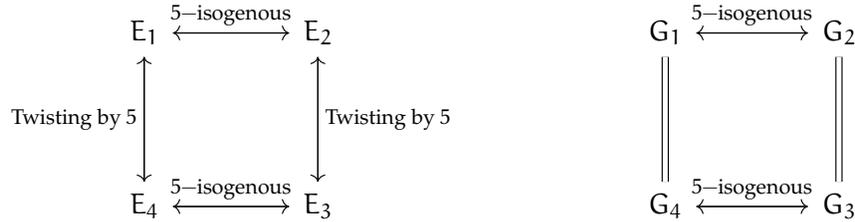
Each square commutes, and the two squares are connected by the map that takes $E_i$ to $\Im(\rho_{E_i,10})$. The vertical maps on the left are 5-twists and on the right are isomorphisms that connect the groups. The horizontal arrows on the left square are 5-isogenies. On the right, the horizontal arrows correspond to how $\Im(\rho_{E_1,10})$ (resp.~$\Im(\rho_{E_3,10})$) transforms to $\Im(\rho_{E_2,10})$ (resp.~$\Im(\rho_{E_4,10})$) under the $5$-isogeny $E_1\to E_2$ (resp.~$E_3 \to E_4$). We refer the reader to Lemma \ref{lem:isogrep} for a description of how these images transform.

As mentioned at the start of this subsection, we want to compute the Weierstrass equation for the universal family of non-CM elliptic curves over $\mQ$ where each member of the family has mod 10 image contained in $G_i$ for each $i = 1,2,3,4$. 
To do so, it suffices to determine the Weierstrass equation for the universal elliptic curve corresponding to ${G_1}$. 
Indeed, the above discussion tells us that we can use isogenies and twisting to determine the Weierstrass equation for the remaining families. 
By studying properties of $E_1$, we get a sense for how to compute the Weierstrass equation for the universal elliptic curve over ${G_1}$. 
The curve $E_1$ is just a twist of an elliptic curve with a point of order 5 by $-110$, which happens to be the square-free part of $\Delta_{E_1}$. Motivated by this, we use work of Zywina \cite{zywinapossible} to find a model for the generic elliptic curve with a point of order 5 
$$\mathcal{E}_t\colon y^2 = x^3 + (-27t^4 + 324t^3 - 378t^2 - 324t - 27)x + (54t^6 - 972t^5 + 4050t^4 + 4050t^2 + 972t + 54),$$
and note that its discriminant is $\Delta_{\mathcal{E}_t}\equiv t(t^2-11t-1) \bmod (\QQ(t)^\times)^2$. 
Let $d := t(t^2-11t-1)$.
Twisting $\mathcal{E}_t$ by $d$ we get 
\begin{align*}
\mathcal{E}_t^{(d)}\colon y^2 &= x^3-27t^2(t^2-11-1)^2(t^4-12t^3+14t^2+12t+1)x\\ & \phantom{ = }+ 54t^3(t^2-11-1)^3(t^4-12t^3+14t^2+12t+1)(t^2+1).
\end{align*}

By construction, this is the Weierstrass equation for the universal family of non-CM elliptic curves over $\mQ$ where each member of the family has mod 10 image contained in $G_1$, and a quick check shows the specialization of $\mathcal{E}_t^{(d)}$ to either $t\in\{10, -1/10 \}$ is exactly $E_1$. 
By our above discussion, we also have that $\mathcal{E}_t^{(5d)}$ is the Weierstrass equation for the universal family of non-CM elliptic curves over $\mQ$ where each member of the family has mod 10 image contained in $G_4$. 
Furthermore, the curve that is 5-isogenous to $\mathcal{E}_t^{(d)}$ is exactly the the Weierstrass equation for the universal family of non-CM elliptic curves over $\mQ$ where each member of the family has mod 10 image contained in $G_2$, and twisting this universal elliptic curve by 5, we get the Weierstrass equation for the universal family of non-CM elliptic curves over $\mQ$ where each member of the family has mod 10 image contained in $G_3$. 

To summarize, we are able to compute all 4 of the Weierstrass equations for the universal families using the constellation from Figure \ref{fig:constellation}.

\begin{remark}
Keeping the above notation, we have that $\langle G_1,-I \rangle = \langle G_4, -I\rangle$ and $\langle G_2,-I\rangle = \langle G_3,-I\rangle$, and so there are just two modular curves in the above discussion. 
We also note that if we intersect these groups with $\SL_2(\zZ{10})$, then they are all equal, and hence the two underlying modular curves are isomorphic over $\mQ(\zeta_5)$. 
\end{remark}

\begin{remark}\label{rem:restrict}
It turns out that this approach to computing modular curves of entanglement groups without $-I$ extends to cover all the examples that we are interested in. The example worked out here is the most complicated situation that we find when restricting to $(p,q)$-entanglement, but as soon as one considers $(p^n,q^m)$-entanglements the situation can become much more complicated. 
\end{remark}

\section{\bf Unexplained entanglements --- genus zero setting}
\label{sec:UnexplainedGenus0}
In this section, we describe two examples of constructing genus zero modular curves corresponding to an unexplained entanglement. These cases needed to be done by hand as the Siegel functions method did not work because we are not able to find a hauptmodul.

\begin{example}\label{exam:Nn3}
Recall that $3{\texttt{Nn}}$ is the normalizer of the nonsplit Cartan subgroup of $\GL_2(\ZZ/3\ZZ)$.
Our goal is to find the elliptic curves $E/\QQ$ with $\Im(\rho_{E,3})$ conjugate to a subgroup of $3{\texttt{Nn}}$ with the property that $\QQ(E[2]) \cap \QQ(E[3])$ is a quadratic extension different from $\QQ(\sqrt{-3})$.
If $E/\QQ$ is an elliptic curve with $\Im(\rho_{E,3}) \simeq 3{\texttt{Nn}}$, then $\QQ(E[3])$ has 3 different quadratic extensions, and hence there are only 2 ways to for $E$ to have an unexplained $(2,3)$-entanglement of type $\zZ{2}$.

By \cite{zywinapossible}, we know that elliptic curves over $\mQ$ such that $\Im(\rho_{E,3})$ is conjugate to a subgroup of $3{\texttt{Nn}}$ are exactly the elliptic curves with $j(E) = t^3$ for some $t\in\QQ$. 
Let $\mathcal{E}_t/\QQ(t)$ be the elliptic curve 
\[
\mathcal{E}_t\colon y^2 + xy = x^3 - \frac{36}{t^3 - 1728}x - \frac{1}{t^3 - 1728},
\]
which is the generic elliptic curve with $j$-invariant $t^3$.

Letting $f_3(x)$ be the 3-division polynomial of $\mathcal{E}_t$, we see that $\Gal(\QQ(t)(x(\mathcal{E}_t[3]))/\QQ(t))\simeq D_4$ so all of the quadratic extensions in $\QQ(t)(\mathcal{E}_t[3])$ are in fact contained in $\QQ(t)(x(\mathcal{E}_t[3]))$. Using \texttt{\texttt{Magma}}, we see that these quadratic extensions are exactly $$\QQ(t)\left(\sqrt{-3}\right),\ \QQ(t)\left( \sqrt{t^2 + 12t + 144} \right),\ \hbox{and }\QQ(t)\left( \sqrt{-3(t^2 + 12t + 144)} \right)$$
and that the unique quadratic extension in $\QQ(t)(\mathcal{E}_t[2])$ is exactly $\QQ(t)\left(\sqrt{\Delta_{\mathcal{E}_t}}\right)$. 
Again, using \texttt{\texttt{Magma}}, we see that $\QQ\left(\sqrt{\Delta_{\mathcal{E}_t}}\right) \simeq \QQ\left(\sqrt{ (t-12)(t^2+12t+144) }\right)$, and so 
in order to have an unexplained $(2,3)$-entanglement of type $\ZZ/2\ZZ$, we need to have 
\[(t-12)(t^2+12t+144) \equiv  \begin{cases}
\phantom{-2}(t^2 + 12t + 144) \bmod \left(\QQ(t)^\times\right)^2,\\
-3(t^2 + 12t + 144) \bmod \left(\QQ(t)^\times\right)^2.
 \end{cases}\]
Solving these equations, we see that these cases correspond to when 
\[
j(E) = 
\left\lbrace
\begin{array}{ll}
(s^2+12)^3 & \hbox{ for some }s\in\QQ,\\ 
\left(-\frac{s^2}{3}+12\right)^3 &   \hbox{ for some }s\in\QQ.
\end{array}
\right.
\]

Our search for admissible subgroups of $\GL_2(\ZZ/6\ZZ)$ that represent a nontrivial entanglement found 2 groups representing a $(2,3)$-entanglement of type $\ZZ/2\ZZ$  whose mod 3 image is conjugate to a subgroup of $3{\texttt{Nn}}$. Those groups are
$$G_1 = \left\langle 
\begin{pmatrix}
0 & 1\\
1 & 3
\end{pmatrix}
,
\begin{pmatrix}
3 & 5\\
1 & 0
\end{pmatrix}
,
\begin{pmatrix}
5 & 4\\
5 & 5
\end{pmatrix}
\right\rangle\hbox{\ \ and \ \ }
G_2 = \left\langle 
\begin{pmatrix}
5 & 1 \\
4 & 1
\end{pmatrix}
,
\begin{pmatrix}
5 & 1 \\
5 & 2
\end{pmatrix}
\right\rangle.$$

To distinguish which $j(E)$ corresponds to which $G_i$, we will evaluate each $j$-map at a ``generic point'' and see if we can determine which elliptic curve over $\QQ$ corresponds to which group. Choosing $s = 3$ and evaluating the above $j$-maps, we get that the curves of minimal conductor with the corresponding $j$-invariants are the curves $E_1/\QQ$ and $E_2/\QQ$ whose Cremona labels are \href{https://www.lmfdb.org/EllipticCurve/Q/25947d1/}{\texttt{25947d1}} and \href{https://www.lmfdb.org/EllipticCurve/Q/36963o1/}{\texttt{36963o1}}, respectively. 
According to LMFDB, these curves have full mod 2 image and mod 3 image exactly $\texttt{3Nn}$, and thus are generic. 

For these curves, we have 
\[
\begin{array}{lcr}
\QQ( \sqrt{ \Delta_{E_1} }) = \QQ(\sqrt{ 93 })& \hbox{ and } & \QQ(\sqrt{\Delta_{E_2}}) = \QQ(\sqrt{-111}),
\end{array}
\]
and 
\[
\begin{array}{lcr}
\QQ\left( \sqrt{ \Delta_{E_1} }\right) \subseteq \QQ\left(\sqrt{-3},\sqrt{-31})\right) \subseteq\QQ\left(x(E_1[3])\right)
& \hbox{ and } &
\QQ\left( \sqrt{ \Delta_{E_2} }\right) \subseteq \QQ\left(\sqrt{-3},\sqrt{37}\right) \subseteq\QQ\left(x(E_2[3])\right).
\end{array}
\]
Using \texttt{\texttt{Magma}} we confirm that, 
\[
\begin{array}{lcr}
L_1 = \QQ(E_1[2])\cap\QQ(E_1[3]) =\QQ(\sqrt{93})
& \hbox{ and } &
L_2 = \QQ(E_2[2])\cap\QQ(E_2[3]) =\QQ(\sqrt{-111}).
\end{array}
\]
Further, from the Galois correspondence, we know that $\Gal(\QQ(E_i[3])/L_i)\simeq \pi_3(N_{2,3}(G_i))$ for either $i=1$ or 2. 
Checking \texttt{\texttt{Magma}}, we see that $\pi_3(N_{2,3}(G_1))$ is not abelian while $\pi_3(N_{2,3}(G_2))$ is abelian, and one last computation shows that $\Gal(\QQ(E_1[3])/L_1)$ is not abelian, while $\Gal(\QQ(E_2[3])/L_2)$ is. 
From this we can see that: 
\begin{align*}
j_1\colon X_{G_1} \to X(1), & \quad  t \mapsto \left(-\frac{t^2}{3}+12\right)^3 \\
j_2\colon X_{G_2} \to X(1), & \quad t \mapsto  \left(t^2+12\right)^3 .
\end{align*}

\end{example}

\begin{remark}
Since the intersections here occur between $\QQ(E_i[2])$ and $\QQ\big( x(E_i[3]) \big)$ and both of these fields are invariant under twisting, the entanglements are independent of the choice of twist. So any twist of the curves $E_1$ and $E_2$ above will have the exact same entanglement, even up to the particular quadratic extension that is in both fields. One can see this on the group theoretic side by the fact that $-I\in G_i$ for $i=1$ or $2$. 
\end{remark}

\begin{remark}[Reality check]
For $G_1$ and $G_2$, we have that $G_1 \cap \SL_2(\zZ{6}) = G_2 \cap \SL_2(\zZ{6})$, and hence the modular curves $X_{G_1}$ and $X_{G_2}$ are isomorphic over $\mQ(\zeta_6)$. 
From the above $j$-maps, this is immediately clear. 
\end{remark}

\begin{example}\label{exam:Jeremy}
Another interesting group we encounter, which the Siegel functions method could not handle, comes from the admissible subgroup $G$ of $\GL_2(\zZ{10})$ with generators
\[
G := \left\langle \left(\begin{matrix}2 & 9 \\ 7 & 1\end{matrix}\right),\left(\begin{matrix}6 & 3 \\ 1 & 7\end{matrix}\right),\left(\begin{matrix}4 & 9 \\ 9 & 6\end{matrix}\right) \right\rangle.
\]
The group $G$ represents an unexplained $(2,5)$-entanglement of type $(S_3,\zZ{3})$. 
We know an elliptic curve with mod 10 image of Galois conjugate to a subgroup of $G$ has surjective mod 2 image and the mod 5 image is conjugate to a subgroup of the unique maximal subgroup $H$ of $\GL_2(\zZ{5})$ which contains the normalizer of the split Cartan at 5. In fact, the group $H$ is an 
{exceptional} group whose projective image is isomorphic to $S_4$. These kinds of groups can only occur for primes less than 11.
We know that $X_G$ is a degree 6 cover of $X_H$ and so if we could determine the map $X_G\to X_H$ then we could determine the $j$-map for $X_G$ since work of Zywina \cite{zywinapossible} tells us that the $j$-map $X_H\to X(1)$ is given by $ j = x^5 + 5x^4 + 40x^3$. 

The subgroup $G$ has a unique index $3$ subgroup. 
This index $3$ subgroup must correspond to \textit{both} the field over which the elliptic curve on $X_G$ obtains a 2-torsion point and the field over which you obtain a rational point on the modular curve $X_s^+(5)$ associated to the normalizer of the split Cartan at 5. 
Now, an elliptic curve with mod 5 image equal to $H$ will have a degree 3 extension over which it attains a point of order 2 and another degree 3 extension over which it corresponds to a point on $X_s^+(5)$, and we want to know when these degree 3 extensions coincide.

First, we use the equation $t^3 - jt - 16j = 0$ for $X_0(2)$, which was computed in \cite{rouse2014elliptic}. 
Second, recent work of Rouse--Sutherland--Zureick-Brown \cite{RouseSZB:elladic} determines that the morphism $X_s^+(5) \to X_H$ is given by $x = ((5/3)t^3 - 8t - 8/3)/(t^2 + t - 1/5)$.  
Using the methods from \cite[p.~19]{bourdonGRW:Isolatedpoints} and \cite{vanHoeijN:AlgebraicFunctionField}, we can produce the degree $6$ cover of $X_H$, which parametrizes when the above two degree 3 extensions coincide. 
Moreover, this gives us our desired degree 6 cover $X_G \to X_H$, which is defined by 
\[x = (8y^6 + 8y^5 - 20y^4 - 50y^3 + 80y^2 - 12y + 3)/((y+1)^2(y^2-3y+1)^2).\] 
By composing the morphisms $X_G \to X_H \to X(1)$, we have our desired $j$-map. 
\end{example}

\begin{remark}
We note that our construction of the modular curve $X_G$ in Example \ref{exam:Jeremy} agrees with the construction from \cite[Subsection 4.3]{JonesM:Nonabelianentanglements}.
\end{remark}

\section{\bf Unexplained entanglements --- genus one setting}
\label{sec:UnexplainedGenus1}
\label{sec:genusone}
In our search, we find that there are exactly 2 maximal groups of genus 1 with positive rank that represent unexplained entanglements; in fact, both represent $(2,7)$-entanglements of type $\ZZ/2\ZZ$ and both groups have surjective image under $\pi_2$. 
The difference between these groups is their image under $\pi_7$:~one of them has image $7{\texttt{Ns}}$ and the other has image $7{\texttt{Nn}}$. 
We let these groups of level 14 be $G_s$ and $G_n$ respectively and provide generators for them below
\begin{align*}
G_s &=\left\langle
\begin{pmatrix}
3& 2\\ 5& 11
\end{pmatrix},
\begin{pmatrix}
2& 13\\ 1& 12
\end{pmatrix},
\begin{pmatrix}
0& 9\\ 5& 0 
\end{pmatrix},
\begin{pmatrix}
13& 12\\ 9& 1 
\end{pmatrix}
\right\rangle\\
G_n&=\left\langle
\begin{pmatrix}
12& 11\\ 11& 3
\end{pmatrix},
\begin{pmatrix}
0& 9\\ 9& 13
\end{pmatrix},
\begin{pmatrix}
8& 9\\ 1& 6
\end{pmatrix}
\right\rangle.
\end{align*}
At first these groups seem quite different, but we will prove that their modular curves are isomorphic over $\mQ$. 

\begin{proposition}[= Proposition \ref{prop:iso}]
The genus one modular curves of positive rank $X_{G_s}$ and $X_{G_n}$ are both isomorphic to the elliptic curve \href{https://www.lmfdb.org/EllipticCurve/Q/196a1/}{\emph{\texttt{196a1}}}. 
\end{proposition}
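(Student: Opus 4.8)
The plan is to produce explicit Weierstrass models for $X_{G_s}$ and $X_{G_n}$ and then verify directly that each is $\mQ$-isomorphic to \texttt{196a1}, so that the isomorphism emerges as an \emph{a posteriori} coincidence of invariants rather than from any structural correspondence. First I would run the genus $1$ analogue of the function-field method recalled in \cref{subsec:Siegel}. Concretely, for each group $G\in\{G_s,G_n\}$ I would form the congruence subgroup $\Gamma_G=\pi^{-1}(G\cap\SL_2(\zZ{14}))$, compute its cusps, and compute the divisors of the relevant products of Siegel functions via \cref{lem:divSFuncs}. I would then search among these modular units and their quotients for two functions $x,y$ on $X_{\Gamma_G}$ with pole orders $2$ and $3$ at a chosen rational cusp; a $\mQ(\zeta_{14})$-linear relation among $1,x,y,x^2,xy,x^3,y^2$ yields a Weierstrass equation for $X_{\Gamma_G}$, and hence for $X_G$ after the usual calibration by rational moduli together with the twisting techniques of Rouse--Zureick-Brown to descend the model from $\mQ(\zeta_{14})$ to $\mQ$.

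Second, since Step 1b already certifies that both curves have positive rank over $\mQ$, each model carries a rational point; moving that point to the origin realises $X_{G_s}$ and $X_{G_n}$ as genuine elliptic curves $E_s,E_n/\mQ$ in Weierstrass form. I would then compute the conductor and $j$-invariant of each; the conductor should be $196=2^2\cdot 7^2$, matching the bad reduction expected of a level $14$ entanglement curve, and I would check the $j$-invariant against the LMFDB entry for \texttt{196a1}.

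Third---the only step with content beyond bookkeeping---I must upgrade a $\overline{\mQ}$-isomorphism to a $\mQ$-isomorphism. Having explicit equations, the cleanest route is to show that $E_s$ (resp.~$E_n$) and \texttt{196a1} share the same $j$-invariant and that the twisting parameter is a rational fourth/sixth power: that is, to exhibit $u\in\mQ^\times$ with $c_4(E_s)=u^4\,c_4(\texttt{196a1})$ and $c_6(E_s)=u^6\,c_6(\texttt{196a1})$, which is exactly what \texttt{Magma}'s \texttt{IsIsomorphic} verifies. This rules out the possibility that $E_s$ and $E_n$ are merely quadratic (or higher) twists of \texttt{196a1}.

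The hard part, I expect, is the first step, and the reason is precisely why the conclusion is surprising. Because $G_s$ and $G_n$ sit over the split- and non-split-Cartan normalisers $7{\texttt{Ns}}$ and $7{\texttt{Nn}}$, the projections of $G_s\cap\SL_2(\zZ{14})$ and $G_n\cap\SL_2(\zZ{14})$ to $\SL_2(\zZ{7})$ are the intersections of these two normalisers with $\SL_2$, which have different orders ($12$ and $16$). Hence the two congruence subgroups---and in particular their cusp data, as Najman notes---are genuinely different, so the two models must be computed independently and there is no structural map forcing them to agree. A secondary subtlety is whether $-I\in G_s,G_n$: if not, the Siegel-function hauptmodul approach does not apply verbatim and I would instead compute the model through the fine-moduli formalism of Rouse--Zureick-Brown, carrying along the correct quadratic twist so that the descended equation records the full group $G$ and not merely $\Gamma_G$.
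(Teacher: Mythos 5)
Your strategy is sound in outline but it is genuinely different from the paper's, and the paper's is considerably lighter. Rather than computing the function field of $X_{\Gamma_G}$ from scratch via modular units and then descending from $\mQ(\zeta_{14})$, the authors exploit the specific shape of $G_s$ and $G_n$: since each has full mod~$2$ image and mod~$7$ image equal to the full Cartan normalizer, $X_{G_s}$ (resp.\ $X_{G_n}$) is a degree~$2$ cover of the genus zero curve $X_s^+(7)$ (resp.\ $X_{ns}^+(7)$), whose coordinate $t$ and $j$-map are already known from Zywina. The cover is cut out by the entanglement condition itself: writing $\Delta_{\mathcal{E}_t}$ of the generic fiber modulo squares and identifying which quadratic subfield of $\mQ(t)(\mathcal{E}_t[7])$ must coincide with $\mQ(t)(\sqrt{\Delta_{\mathcal{E}_t}})$, one finds that the condition is that an explicit cubic, respectively quartic, in $t$ be a square. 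This hands you the models $y^2 = x^3-4x^2+3x+1$ and $y^2=-(x-3)(x^3-7x^2+7x+7)$ directly over $\mQ$, with no Siegel functions, no cusp analysis, no calibration, and the $j$-maps for free by composing with $j_s$ and $j_n$. Your final step (matching $c_4,c_6$ up to $u^4,u^6$, i.e.\ \texttt{IsIsomorphic}) is exactly how the paper closes, and your observation that the two covers have genuinely different ramification/cusp data --- so the $\mQ$-isomorphism is an unforced coincidence --- matches the authors' own remark.

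One step of your plan may fail as literally described. On a genus one curve, a modular unit has divisor supported entirely at the cusps, so producing functions with pole orders exactly $2$ and $3$ at a single rational cusp from Siegel units requires specific relations to hold in the cuspidal divisor class group of $X_{\Gamma_G}$; there is no guarantee such units (or quotients of them) exist, and indeed the authors report that even in some genus zero cases the Siegel function method broke down. If the search fails you would need to supplement it, e.g.\ with $q$-expansions of weight~$2$ cusp forms or with precisely the double-cover structure the paper uses. The rest of your argument --- positive rank supplying a rational point to serve as origin, and the $c_4,c_6$ comparison to pin down the $\mQ$-isomorphism class rather than merely the $\overline{\mQ}$-class --- is correct. (Also, both $G_s$ and $G_n$ do contain $-I$, so your fallback to the fine-moduli formalism is not needed here.)
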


\subsection{Computing $X_{G_s}$}
Let $\mathcal{E}_t/\mQ(t)$ be the generic elliptic curve with mod 7 image of Galois contained in $7{\texttt{Ns}}$ twisted to minimize its conductors and discriminant. 
We do this by constructing the elliptic curve with $j$-invariant equal to the $j$-map $j_s\colon X_s^+(7)\to X(1)$ using \cite{zywinapossible} and then twisting. 
We compute that
\[
\Delta_{\mathcal{E}_t} \equiv (t^3 - 4t^2 + 3t + 1)(t^4 - 10t^3 + 27t^2 - 10t - 27) \mod (\QQ(t)^\times)^2
\]
and the 7-division polynomial of $\mathcal{E}_t$, call it $f_{7}(x)$, factors into a degree 6 polynomial times a degree 18 polynomial. 
Further, we see that the degree 6 polynomial factors over the field 
\[
K_s = \QQ(t)\left(\sqrt{t^4 - 10t^3 + 27t^2 - 10t - 27}\right), 
\]
and now, we notice that $\QQ(\sqrt{\Delta_{\mathcal{E}_t}}) = K_s$ exactly when $(t^3 - 4t^2 + 3t + 1)$ is a square. 

As we are considering unexplained $(2,7)$-entanglements of type $\zZ{2}$, we see that the modular curve $X_{G_s}$ is isomorphic to the elliptic curve with Weierstrass equation $ y^2 = (x^3 - 4x^2 + 3x + 1)$, which is the rank 1 elliptic curve \href{https://www.lmfdb.org/EllipticCurve/Q/196a1/}{\texttt{196a1}}. 
Using \texttt{\texttt{Magma}}, we find that 
\[X_{G_s}(\QQ) = \langle (1,1),(0,0) \rangle \simeq \ZZ \oplus \ZZ/2\ZZ\]
and the $j$-map $X_{G_s} \to X(1)$ is exactly the map $P = (x,y) \mapsto j_s(x)$ where 
$$j_s(t) = \frac{t(t+1)^3(t^2-5t+1)^3(t^2-5t+8)^3(t^4 - 5t^3 + 8t^2 - 7t + 7)^2}{(t^3 - 4t^2 + 3t + 1)^7},$$ 
is the aforementioned  $j$-map of $X_s^+(7)$ taken from \cite{zywinapossible}.

\subsection{Computing $X_{G_n}$}
We proceed the same way as before. This time we let  $\mathcal{E}_t$ be the generic elliptic curve with mod 7 image of Galois contained in $7{\texttt{Nn}}$ twisted to minimize its conductor and discriminant. In this case, 
$$\Delta_{\mathcal{E}_t} \equiv (t-3)(t^3-7t^2+7t+7)(2t^4 - 14t^3 + 21t^2 + 28t + 7) \mod (\QQ(t)^\times)^2$$
If we let $d_2 = 7(2t^4 - 14t^3 + 21t^2 + 28t + 7)$ we can see that the 7-division polynomial of $\mathcal{E}_t$, call it $f_7(x)$, is irreducible over $\QQ(t)$ but factors over $\QQ(t)(\sqrt{d_2})$. Thus $\QQ(t)(\sqrt{d_2}) \subseteq \QQ(t)(\mathcal{E}_t[7]).$ 
Since $\QQ(t)(\zeta_7)\subseteq\QQ(t)(\mathcal{E}_t[7])$, we also have that $\QQ(t)(\sqrt{-7d_2}) \subseteq \QQ(t)(\mathcal{E}_t[7])$. 
A simple inspection shows that $\QQ(t)(\sqrt{\Delta_{\mathcal{E}_t}}) = \QQ(t)(\sqrt{-7d_2})$ exactly when $-(t-3)(t^3-7t^2+7t+7)$ is a square. 

Therefore, we are looking for values of $t\in\QQ$ such that there is a $y\in \QQ$ with $y^2 = -(t-3)(t^3-7t^2+7t+7)$. This again defines out a genus 1 curve and miraculously it is $\QQ$-isomorphic to \href{https://www.lmfdb.org/EllipticCurve/Q/196a1/}{\texttt{196a1}}. That is to say, $X_{G_s} \simeq_\QQ X_{G_n}$, and the only difference is their j-maps. 

Let $E\colon y^2 = x^3 - 4x^2 + 3x + 1 $, let $C/\QQ$ be the curve
$$C \colon y^2 = -(x-3)(x^3-7x^2+7x+7),$$ 
and let $\psi\colon E \simeq C$ be an isomorphism between them. 
We have that $X_{G_n} \simeq E$ and the $j$-map of $X_{G_n}$ is define by taking a point $P\in C(\mQ)$ and mapping it to $j_n(\psi^{-1}(P)_x)$ where $\psi^{-1}(P)_x$ is the $x$-coordinate of $\psi^{-1}(P)$ and $j_n\colon X_n^+(7)\to X(1)$ is the $j$-map from Zywina \cite{zywinapossible} given by
$$j_n(t) = 1600\frac{t^3(t^2-7t+14)^3(5t^2-14t-7)^3(t^2+7)^3}{(t^3 - 7t^2 + 7t + 7)^7}.$$

\section{\bf Appendix --- Tables} 
\label{sec:tables}
To conclude, we provide tables of the various modular curves we constructed whose rational points parametrize elliptic curves with an unexplained $(p,q)$-entanglement of type $T$ for the pairs $((p,q),T)$ mentioned in Theorem \ref{thm:main}.

There are three subsections:~genus 0 groups with $-I$, genus 0 groups without $-I$, and genus 1 groups (with $-I$). 
In each subsection, there is a table for each pair $((p,q),T)$ where $(p,q)$ is the level of the entanglement and $T$ is the type of entanglement. 
In each table, we will provide the label (see our group conventions) for the group $G$ representing an unexplained $(p,q)$-entanglement of type $T$, the generators for the group $G$, a parametrization for the modular curve $X_G$, and finally an example from LMFDB of an elliptic curve with an unexplained $(p,q)$-entanglement of type $T$. 
These examples were chosen to minimize the conductor, and when possible, we tried to find non-CM examples. 
When a CM example is presented, this means that there were no non-CM examples in the entire LMFDB database which had this prescribed composite level image of Galois.

\subsection{Genus 0 groups with $-I$}
In these tables, the parametrization of the modular curve comes as a rational function $f(t)$. 

\begin{center}
\renewcommand{\arraystretch}{1.5}
\begin{tabular}{| c | c | c | c |}\hline
\multicolumn{4}{| c |}{ \textbf{$(2,3)$-entanglements of type $\zZ{2}$}}   \\\hline
\textsc{Label} & \textsc{Generators} & \textsc{$j$-map} & \textsc{Example} \\\hline\hline
$[ \texttt{GL2}, \texttt{3Ns} ]$ & $\left\langle \left(\begin{smallmatrix}3 & 5 \\ 4 & 3\end{smallmatrix}\right),\left(\begin{smallmatrix}5 & 3 \\ 3 & 2\end{smallmatrix}\right),\left(\begin{smallmatrix}2 & 3 \\ 3 & 1\end{smallmatrix}\right) \right\rangle$ & $ \frac{(t - 3)^{3}(t + 3)^{3}(t^{2} + 3)^{3}}{t^{6}}$ & \href{https://www.lmfdb.org/EllipticCurve/Q/6627e1/}{\texttt{6627e1}} \\ 
$[ \texttt{GL2}, \texttt{3Nn} ]$ & $\left\langle \left(\begin{smallmatrix}5 & 1 \\ 4 & 1\end{smallmatrix}\right),\left(\begin{smallmatrix}5 & 1 \\ 5 & 2\end{smallmatrix}\right) \right\rangle$ & $(t^{2} + 12)^{3}$ & \href{https://www.lmfdb.org/EllipticCurve/Q/1369e1/}{\texttt{1369e1}} \\ 
$[ \texttt{GL2}, \texttt{3Nn} ]$ & $\left\langle \left(\begin{smallmatrix}0 & 1 \\ 1 & 3\end{smallmatrix}\right),\left(\begin{smallmatrix}3 & 5 \\ 1 & 0\end{smallmatrix}\right),\left(\begin{smallmatrix}5 & 4 \\ 5 & 5\end{smallmatrix}\right) \right\rangle$ & $3^{3}(t - 2)^{3}(t + 2)^{3}$ & \href{https://www.lmfdb.org/EllipticCurve/Q/31046b2/}{\texttt{31046b2}} \\ 
\hline 
\end{tabular}
\end{center}
\vspace{7pt}

\begin{center}
\renewcommand{\arraystretch}{1.5}
\begin{tabular}{| c | c | c | c |}\hline
\multicolumn{4}{| c |}{\textbf{ $(2,3)$-entanglements of type $(S_3,\zZ{3})$}}   \\\hline
\textsc{Label} & \textsc{Generators} & \textsc{$j$-map} & \textsc{Example} \\\hline\hline
$[ \texttt{GL2}, \texttt{GL3} ]$ & $\left\langle \left(\begin{smallmatrix}5 & 1 \\ 4 & 3\end{smallmatrix}\right),\left(\begin{smallmatrix}4 & 1 \\ 1 & 0\end{smallmatrix}\right) \right\rangle$ & $2^{10}3^{3}t^{3}(4t^{3} - 1)$ & \href{https://www.lmfdb.org/EllipticCurve/Q/300a1/}{\texttt{300a1}} \\ 
\hline \end{tabular}
\end{center}
\vspace{7pt}

\begin{center}
\renewcommand{\arraystretch}{1.5}
\begin{tabular}{| c | c | c | c |}\hline
\multicolumn{4}{| c |}{ \textbf{$(2,5)$-entanglements of type $\zZ{2}$}}   \\\hline
\textsc{Label} & \textsc{Generators} & \textsc{$j$-map} & \textsc{Example} \\\hline\hline
$[ \texttt{GL2}, \texttt{5B} ]$ & $\left\langle \left(\begin{smallmatrix}9 & 5 \\ 1 & 2\end{smallmatrix}\right),\left(\begin{smallmatrix}4 & 5 \\ 1 & 9\end{smallmatrix}\right),\left(\begin{smallmatrix}8 & 5 \\ 1 & 8\end{smallmatrix}\right) \right\rangle$ & $ \frac{(t^{4} + 10t^{2} + 5)^{3}}{t^{2}}$ & \href{https://www.lmfdb.org/EllipticCurve/Q/1369e1/}{\texttt{1369e1}} \\ 
$[ \texttt{GL2}, \texttt{5B} ]$ & $\left\langle \left(\begin{smallmatrix}9 & 5 \\ 9 & 4\end{smallmatrix}\right),\left(\begin{smallmatrix}7 & 5 \\ 9 & 4\end{smallmatrix}\right),\left(\begin{smallmatrix}2 & 5 \\ 7 & 2\end{smallmatrix}\right) \right\rangle$ & $ \frac{(t^{4} + 50t^{2} + 125)^{3}}{5^5t^{2}}$ & \href{https://www.lmfdb.org/EllipticCurve/Q/1369e2/}{\texttt{1369e2}} \\ 
$[ \texttt{GL2}, \texttt{5Nn} ]$ & $\left\langle \left(\begin{smallmatrix}7 & 7 \\ 9 & 8\end{smallmatrix}\right),\left(\begin{smallmatrix}4 & 3 \\ 3 & 8\end{smallmatrix}\right) \right\rangle$ & $\frac{2^{15}5^4 t^{3}(20t^{2} - 20t + 1)(400t^{4} + 200t^{3} + 80t^{2} + 10t + 1)^{3}}{(20t^{2} - 1)^{10}}$ & \href{https://www.lmfdb.org/EllipticCurve/Q/4900l1/}{\texttt{4900l1}} \\ 
$[ \texttt{GL2}, \texttt{5Nn} ]$ & $\left\langle \left(\begin{smallmatrix}7 & 2 \\ 9 & 3\end{smallmatrix}\right),\left(\begin{smallmatrix}9 & 3 \\ 3 & 8\end{smallmatrix}\right),\left(\begin{smallmatrix}4 & 3 \\ 3 & 3\end{smallmatrix}\right) \right\rangle$ & $\frac{(-5)^3 (t - 1)(5t - 1)(5t^{2} - 10t + 1)^{3}(5t^{2} + 3)^{3}(15t^{2} + 1)^{3}}{(5t^{2} - 1)^{10}}$ & \href{https://www.lmfdb.org/EllipticCurve/Q/27a1/}{\texttt{27a1}} \\ 
\hline \end{tabular}
\end{center}
\vspace{7pt}

\begin{center}
\renewcommand{\arraystretch}{1.5}
\begin{tabular}{| c | c | c | c |}\hline
\multicolumn{4}{| c |}{ \textbf{$(2,5)$-entanglements of type $(S_3,\zZ{3})$}}   \\\hline
\textsc{Label} & \textsc{Generators} & \textsc{$j$-map} & \textsc{Example} \\\hline\hline
$[ \texttt{GL2}, \texttt{5S4} ]$ & $\left\langle \left(\begin{smallmatrix}2 & 9 \\ 7 & 1\end{smallmatrix}\right),\left(\begin{smallmatrix}6 & 3 \\ 1 & 7\end{smallmatrix}\right),\left(\begin{smallmatrix}4 & 9 \\ 9 & 6\end{smallmatrix}\right) \right\rangle$ & $ \frac{(2t^{2} - t + 2)^{3}P_1(t)^{3}(18t^{6} - 12t^{5} - 70t^{4} + 25t^{3} + 130t^{2} - 52t + 8)}{(t + 1)^{10}(t^{2} - 3t + 1)^{10}}$ & \href{https://www.lmfdb.org/EllipticCurve/Q/3240a1/}{\texttt{3240a1} }\\ 
\hline \end{tabular}
\end{center}
\begin{align*}
P_1(t) &= (8t^{6} + 8t^{5} - 20t^{4} - 50t^{3} + 80t^{2} - 12t + 3).
\end{align*}

\vspace{7pt}

\begin{center}
\setlength{\extrarowheight}{1.4pt}
\begin{tabular}{| c | c | c | c |}\hline
\multicolumn{4}{| c |}{ \textbf{$(2,7)$-entanglements of type $\zZ{3}$}}   \\\hline
\textsc{Label} & \textsc{Generators} & \textsc{$j$-map} & \textsc{Example} \\\hline\hline
$[ \texttt{2Cn}, \texttt{7B} ]$ & $\left\langle \left(\begin{smallmatrix}10 & 7 \\ 5 & 11\end{smallmatrix}\right),\left(\begin{smallmatrix}2 & 7 \\ 3 & 3\end{smallmatrix}\right),\left(\begin{smallmatrix}5 & 7 \\ 9 & 12\end{smallmatrix}\right) \right\rangle$& $ \frac{(t^{2} + t + 1)^{3}(t^{6} + 5t^{5} + 12t^{4} + 9t^{3} + 2t^{2} + t + 1)P_2(t)^{3}}{t^{14}(t + 1)^{14}(t^{3} + 2t^{2} - t - 1)^{2}}$ & \href{https://www.lmfdb.org/EllipticCurve/Q/1922e1/}{\texttt{1922e1}} \\ 
$[ \texttt{2Cn}, \texttt{7B} ]$ & $\left\langle \left(\begin{smallmatrix}5 & 0 \\ 8 & 5\end{smallmatrix}\right),\left(\begin{smallmatrix}3 & 0 \\ 8 & 11\end{smallmatrix}\right),\left(\begin{smallmatrix}9 & 7 \\ 7 & 10\end{smallmatrix}\right) \right\rangle$ & $ \frac{7^{4}(t^{2} + t + 1)^{3}(9t^{6} + 39t^{5} + 64t^{4} + 23t^{3} + 4t^{2} + 15t + 9)P_3(t)^{3}}{(t^{3} + t^{2} - 2t - 1)^{14}(t^{3} + 8t^{2} + 5t - 1)^{2}}$ & \href{https://www.lmfdb.org/EllipticCurve/Q/3969c2/}{\texttt{3969c2}} \\ 
$[ \texttt{2Cn}, \texttt{7B} ]$ &  $\left\langle \left(\begin{smallmatrix}9 & 7 \\ 3 & 10\end{smallmatrix}\right),\left(\begin{smallmatrix}5 & 7 \\ 9 & 12\end{smallmatrix}\right) \right\rangle$ & $ \frac{(t^{2} - t + 1)^{3}(t^{6} - 5t^{5} + 12t^{4} - 9t^{3} + 2t^{2} - t + 1)P_4(t)^{3}}{(t - 1)^{2}t^{2}(t^{3} - 2t^{2} - t + 1)^{14}}$ & \href{https://www.lmfdb.org/EllipticCurve/Q/1922e2/}{\texttt{1922e2}} \\ 
\hline \end{tabular}
\end{center}
\begin{align*}
P_2(t) &= t^{12} + 8t^{11} + 25t^{10} + 34t^{9} + 6t^{8} - 30t^{7} - 17t^{6} + 6t^{5} - 4t^{3} + 3t^{2} + 4t + 1,\\
P_3(t) &= t^{12} + 18t^{11} + 131t^{10} + 480t^{9} + 1032t^{8} + 1242t^{7} + 805t^{6} + 306t^{5} + 132t^{4} + 60t^{3} - t^{2} - 6t + 1,\\
P_4(t) &= t^{12} - 8t^{11} + 265t^{10} - 1474t^{9} + 5046t^{8} - 10050t^{7} + 11263t^{6} - 7206t^{5} + 2880t^{4} - 956t^{3}\\ &\phantom{= t^{12}}+ 243t^{2} - 4t + 1.
\end{align*}

\begin{center}
\renewcommand{\arraystretch}{1.5}
\begin{tabular}{| c | c | c | c |}\hline
\multicolumn{4}{| c |}{ \textbf{$(2,13)$-entanglements of type $\zZ{2}$}}   \\\hline
\textsc{Label} & \textsc{Generators} & \textsc{$j$-map} & \textsc{Example} \\\hline\hline
$[ \texttt{GL2}, \texttt{13B} ]$ & $\left\langle \left(\begin{smallmatrix}21 & 6 \\ 21 & 17\end{smallmatrix}\right),\left(\begin{smallmatrix}5 & 23 \\ 22 & 7\end{smallmatrix}\right),\left(\begin{smallmatrix}12 & 15 \\ 25 & 17\end{smallmatrix}\right),\left(\begin{smallmatrix}11 & 11 \\ 12 & 13\end{smallmatrix}\right) \right\rangle$ & $\frac{(13t^{4} + 5t^{2} + 1)(28561t^{8} + 15379t^{6} + 3380t^{4} + 247t^{2} + 1)^{3}}{7t^{2}}$ & \href{https://www.lmfdb.org/EllipticCurve/Q/9025j2/}{\texttt{9025j2}}\\ 
$[ \texttt{GL2}, \texttt{13B} ]$ & $\left\langle \left(\begin{smallmatrix}5 & 18 \\ 12 & 9\end{smallmatrix}\right),\left(\begin{smallmatrix}17 & 20 \\ 5 & 21\end{smallmatrix}\right),\left(\begin{smallmatrix}14 & 21 \\ 15 & 0\end{smallmatrix}\right) \right\rangle$ & $ \frac{(t^{4} + 5t^{2} + 13)(t^{8} + 7t^{6} + 20t^{4} + 19t^{2} + 1)^{3}}{t^{2}}$ & \href{https://www.lmfdb.org/EllipticCurve/Q/9025j1/}{\texttt{9025j1}} \\ 
\hline \end{tabular}
\end{center}
\vspace{7pt}
\begin{center}
\renewcommand{\arraystretch}{1.5}
\begin{tabular}{| c | c | c | c |}\hline
\multicolumn{4}{| c |}{ \textbf{$(3,5)$-entanglements of type $\zZ{2}$}}   \\\hline
\textsc{Label} & \textsc{Generators} & \textsc{$j$-map} & \textsc{Example} \\\hline\hline
$[ \texttt{3Nn}, \texttt{5B} ]$ & $\left\langle \left(\begin{smallmatrix}10 & 7 \\ 8 & 4\end{smallmatrix}\right),\left(\begin{smallmatrix}10 & 8 \\ 4 & 4\end{smallmatrix}\right),\left(\begin{smallmatrix}11 & 2 \\ 8 & 10\end{smallmatrix}\right),\left(\begin{smallmatrix}13 & 5 \\ 1 & 7\end{smallmatrix}\right),\left(\begin{smallmatrix}1 & 11 \\ 4 & 13\end{smallmatrix}\right) \right\rangle$ & $ \frac{2^{12}P_5(t)^{3}}{(t - 1)^{15}(t + 1)^{15}(t^{2} - 4t - 1)^{3}}$ & \href{https://www.lmfdb.org/EllipticCurve/Q/1369e1/}{\texttt{1369e1}} \\ 
$[ \texttt{3Nn}, \texttt{5B} ]$ & $\left\langle \left(\begin{smallmatrix}1 & 2 \\ 13 & 10\end{smallmatrix}\right),\left(\begin{smallmatrix}14 & 5 \\ 7 & 2\end{smallmatrix}\right),\left(\begin{smallmatrix}11 & 1 \\ 14 & 8\end{smallmatrix}\right),\left(\begin{smallmatrix}11 & 11 \\ 13 & 14\end{smallmatrix}\right),\left(\begin{smallmatrix}10 & 8 \\ 2 & 11\end{smallmatrix}\right) \right\rangle$ & $ \frac{2^{12}P_6(t)^{3}}{(t - 1)^{3}(t + 1)^{3}(t^{2} - 4t - 1)^{15}}$ & \href{https://www.lmfdb.org/EllipticCurve/Q/1369e2/}{\texttt{1369e2}} \\ 
\hline \end{tabular}
\begin{align*}
P_5(t) &= t^{12} - 9t^{11} + 39t^{10} - 75t^{9} + 75t^{8} - 114t^{7} + 26t^{6} + 114t^{5} + 75t^{4} + 75t^{3} + 39t^{2} + 9t + 1,\\
P_6(t) &= 211t^{12} - 189t^{11} - 501t^{10} - 135t^{9} + 345t^{8} + 966t^{7} + 146t^{6} - 966t^{5} + 345t^{4} + 135t^{3}\\
&\phantom{= 211t^{12} } - 501t^{2} + 189t + 211.
\end{align*}
\end{center}

\begin{remark}
In the above tables, we note that the elliptic curves \href{https://www.lmfdb.org/EllipticCurve/Q/1369e1/}{\texttt{1369e1}} and \href{https://www.lmfdb.org/EllipticCurve/Q/1369e2/}{\texttt{1369e2}} both have an unexplained $(2,5)$-entanglement of type $\zZ{2}$ and an unexplained $(3,5)$-entanglement of type $\zZ{2}$. 
In fact, one can show that for these curves $\mQ(E[2])\cap \mQ(E[3])\cap \mQ(E[5]) \simeq \mQ(\sqrt{37})$, and so they have a $(2,3,5)$-entanglement of type $\zZ{2}$. 
While it would be interesting to classify elliptic curves with this entanglement, we do not attempt to do so here.
\end{remark}

\subsection{Genus 0 groups without $-I$}
In these tables, the parametrization of the modular curve comes in the form $a$-invariants for the generic elliptic curve with a prescribed image of Galois. 

\begin{center}
\renewcommand{\arraystretch}{1.5}
\begin{tabular}{| c | c | c | c |}\hline
\multicolumn{4}{| c |}{ \textbf{$(2,3)$-entanglements of type $\zZ{2}$}}   \\\hline
\textsc{Label} & \textsc{Generators} & \textsc{$a$-invariants} & \textsc{Example} \\\hline\hline
$[ \texttt{GL2}, \texttt{3B} ]$ & $\left\langle \left(\begin{smallmatrix}5 & 5 \\ 0 & 5\end{smallmatrix}\right),\left(\begin{smallmatrix}2 & 5 \\ 3 & 2\end{smallmatrix}\right),\left(\begin{smallmatrix}2 & 1 \\ 3 & 1\end{smallmatrix}\right) \right\rangle$ &  $[ Q_1(t), R_1(t) ]$ & \href{https://www.lmfdb.org/EllipticCurve/Q/73926l2/}{\texttt{73926l2}}\\ 
$[ \texttt{GL2}, \texttt{3B} ]$ & $\left\langle \left(\begin{smallmatrix}2 & 5 \\ 3 & 2\end{smallmatrix}\right),\left(\begin{smallmatrix}1 & 3 \\ 3 & 2\end{smallmatrix}\right) \right\rangle$ & $\left[  \frac{1}{3^2}Q_1(t), \frac{1}{3^3}R_1(t) \right]$ & \href{https://www.lmfdb.org/EllipticCurve/Q/73926x1/}{\texttt{73926x1}}\\ 
\hline \end{tabular}
\end{center}
\begin{align*}
Q_1(t) &= 3^{5}t^{2}(t + 1)(t + 3)(t^{2} + 3)(t^{2} + 3t + 3)^{2},\\
R_1(t) &= 23^{6}t^{3}(t^{2} - 3)(t^{2} + 3t + 3)^{3}(t^{4} + 6t^{3} + 18t^{2} + 18t + 9).
\end{align*}

\vspace{7pt}

\begin{center}
\renewcommand{\arraystretch}{1.5}
\begin{tabular}{| c | c | c | c |}\hline
\multicolumn{4}{| c |}{ \textbf{$(2,5)$-entanglements of type $\zZ{2}$}}   \\\hline
\textsc{Label} & \textsc{Generators} & \textsc{$a$-invariants} & \textsc{Example} \\\hline\hline
$[ \texttt{GL2}, \texttt{5B.4.2} ]$ & $\left\langle \left(\begin{smallmatrix}6 & 5 \\ 3 & 1\end{smallmatrix}\right),\left(\begin{smallmatrix}6 & 5 \\ 7 & 3\end{smallmatrix}\right),\left(\begin{smallmatrix}9 & 0 \\ 3 & 9\end{smallmatrix}\right) \right\rangle$ & $[  Q_2(t), R_2(t) ]$ & \href{https://www.lmfdb.org/EllipticCurve/Q/371522f1/}{\texttt{371522f1}}\\ 
$[ \texttt{GL2}, \texttt{5B.4.2} ]$ & $\left\langle \left(\begin{smallmatrix}9 & 5 \\ 5 & 8\end{smallmatrix}\right),\left(\begin{smallmatrix}6 & 5 \\ 1 & 1\end{smallmatrix}\right),\left(\begin{smallmatrix}9 & 0 \\ 3 & 9\end{smallmatrix}\right) \right\rangle$ & $[  5^{2}Q_2(t), 5^{3}R_2(t) ]$ & \href{https://www.lmfdb.org/EllipticCurve/Q/1225j2/}{\texttt{1225j2}} \\ 
$[ \texttt{GL2}, \texttt{5B.4.1} ]$ & $\left\langle \left(\begin{smallmatrix}6 & 5 \\ 3 & 1\end{smallmatrix}\right),\left(\begin{smallmatrix}3 & 5 \\ 9 & 6\end{smallmatrix}\right),\left(\begin{smallmatrix}9 & 0 \\ 3 & 9\end{smallmatrix}\right) \right\rangle$ & $[ Q_3(t) , R_3(t) ]$ & \href{https://www.lmfdb.org/EllipticCurve/Q/371522f2/}{\texttt{371522f2}}\\ 
$[ \texttt{GL2}, \texttt{5B.4.1} ]$ & $\left\langle \left(\begin{smallmatrix}6 & 5 \\ 1 & 1\end{smallmatrix}\right),\left(\begin{smallmatrix}2 & 5 \\ 5 & 9\end{smallmatrix}\right),\left(\begin{smallmatrix}9 & 0 \\ 3 & 9\end{smallmatrix}\right) \right\rangle$ & $[  5^{2}R_3(t), 5^{3}Q_3(t) ]$ & \href{https://www.lmfdb.org/EllipticCurve/Q/1225j2/}{\texttt{1225j2}}\\ 
\hline \end{tabular}
\end{center}
\begin{align*}
Q_2(t) &= 3^{3}t^{2}(t^{2} - 11t - 1)^{2}(t^{4} - 12t^{3} + 14t^{2} + 12t + 1),\\
R_2(t) &= 23^{3}t^{3}(t^{2} - 11t - 1)^{3}(t^{2} + 1)(t^{4} - 18t^{3} + 74t^{2} + 18t + 1),\\
Q_3(t) &= 3^{3}t^{2}(t^{2} - 11t - 1)^{2}(t^{4} + 228t^{3} + 494t^{2} - 228t + 1),\\
R_3(t) &= 23^{3}t^{3}(t^{2} - 11t - 1)^{3}(t^{2} + 1)(t^{4} - 522t^{3} - 10006t^{2} + 522t + 1).
\end{align*}

\begin{center}
\renewcommand{\arraystretch}{1.5}
\begin{tabular}{| c | c | c | c |}\hline
\multicolumn{4}{| c |}{ \textbf{$(2,7)$-entanglements of type $\zZ{2}$}}   \\\hline
\textsc{Label} & \textsc{Generators} & \textsc{$a$-invariants} & \textsc{Example} \\\hline\hline
$[ \texttt{GL2}, \texttt{7B} ]$ & $\left\langle \left(\begin{smallmatrix}11 & 7 \\ 1 & 10\end{smallmatrix}\right),\left(\begin{smallmatrix}10 & 7 \\ 1 & 10\end{smallmatrix}\right),\left(\begin{smallmatrix}3 & 0 \\ 5 & 3\end{smallmatrix}\right),\left(\begin{smallmatrix}5 & 7 \\ 6 & 11\end{smallmatrix}\right) \right\rangle$ & $[ Q_4(t), R_4(t) ]$ &\href{https://www.lmfdb.org/EllipticCurve/Q/19600db2/}{\texttt{19600db2}}  \\ 
$[ \texttt{GL2}, \texttt{7B} ]$ & $\left\langle \left(\begin{smallmatrix}5 & 7 \\ 8 & 5\end{smallmatrix}\right),\left(\begin{smallmatrix}10 & 7 \\ 5 & 1\end{smallmatrix}\right),\left(\begin{smallmatrix}12 & 7 \\ 3 & 12\end{smallmatrix}\right) \right\rangle$ & $\left[  \frac{1}{7^2}Q_4(t), \frac{1}{7^3}R_4(t) \right]$ & \href{https://www.lmfdb.org/EllipticCurve/Q/19600by2/}{\texttt{19600by2}}\\ 
\hline \end{tabular}
\end{center}
\begin{align*}
Q_4(t) &=  3^{3}7^{2}t^{2}(t^{2} + 13t + 49)^{3}(t^{2} + 245t + 2401),\\
R_4(t) &= 23^{3}7^{3}t^{3}(t^{2} + 13t + 49)^{4}(t^{4} - 490t^{3} - 21609t^{2} - 235298t - 823543). 
\end{align*}

\vspace{7pt}

\begin{center}
\renewcommand{\arraystretch}{1.5}
\begin{tabular}{| c | c | c | c |}\hline
\multicolumn{4}{| c |}{ \textbf{$(2,13)$-entanglements of type $\zZ{2}$}}   \\\hline
\textsc{Label} & \textsc{Generators} & \textsc{$a$-invariants} & \textsc{Example} \\\hline\hline
$[ \texttt{GL2}, \texttt{13B.4.1} ]$ & $\left\langle \left(\begin{smallmatrix}8 & 7 \\ 5 & 12\end{smallmatrix}\right),\left(\begin{smallmatrix}20 & 25 \\ 21 & 14\end{smallmatrix}\right),\left(\begin{smallmatrix}5 & 3 \\ 4 & 3\end{smallmatrix}\right) \right\rangle$ & $[ Q_5(t), R_5(t) ]$ & \href{https://www.lmfdb.org/EllipticCurve/Q/74529q1/}{\texttt{74529q1}}\\ 
$[ \texttt{GL2}, \texttt{13B.4.1} ]$ & $\left\langle \left(\begin{smallmatrix}0 & 23 \\ 21 & 25\end{smallmatrix}\right),\left(\begin{smallmatrix}15 & 15 \\ 20 & 5\end{smallmatrix}\right),\left(\begin{smallmatrix}25 & 24 \\ 19 & 9\end{smallmatrix}\right),\left(\begin{smallmatrix}5 & 16 \\ 9 & 5\end{smallmatrix}\right) \right\rangle$ & $\left[  \frac{1}{13^2}Q_5(t), \frac{1}{13^3}R_5(t) \right]$ & \href{https://www.lmfdb.org/EllipticCurve/Q/355008ej1/}{\texttt{355008ej1}}\\ 
$[ \texttt{GL2}, \texttt{13B.4.2} ]$ & $\left\langle \left(\begin{smallmatrix}15 & 15 \\ 20 & 5\end{smallmatrix}\right),\left(\begin{smallmatrix}22 & 23 \\ 9 & 24\end{smallmatrix}\right),\left(\begin{smallmatrix}7 & 10 \\ 25 & 5\end{smallmatrix}\right),\left(\begin{smallmatrix}7 & 1 \\ 14 & 1\end{smallmatrix}\right) \right\rangle$ & $[ Q_6(t) , R_6(t) ]$ & \href{https://www.lmfdb.org/EllipticCurve/Q/355008ej2/}{\texttt{355008ej2}}\\ 
$[ \texttt{GL2}, \texttt{13B.4.2} ]$ & $\left\langle \left(\begin{smallmatrix}18 & 7 \\ 3 & 16\end{smallmatrix}\right),\left(\begin{smallmatrix}22 & 23 \\ 9 & 24\end{smallmatrix}\right),\left(\begin{smallmatrix}19 & 14 \\ 23 & 1\end{smallmatrix}\right) \right\rangle$ & $[  13^2Q_6(t),13^3R_6(t) ]$ & \href{https://www.lmfdb.org/EllipticCurve/Q/74529q2/}{\texttt{74529q2}}\\ 
\hline \end{tabular}
\end{center}
\begin{align*}
Q_5(t) &=  3^{3}13^{2}t^{2}(t^{2} - 3t - 1)^{2}(t^{4} - t^{3} + 5t^{2} + t + 1)^{3}(t^{8} - 5t^{7} + 7t^{6} - 5t^{5} + 5t^{3} + 7t^{2} + 5t + 1),\\
R_5(t) &= 23^{3}13^{3}t^{3}(t^{2} - 3t - 1)^{3}(t^{2} + 1)(t^{4} - t^{3} + 5t^{2} + t + 1)^{4}(t^{12} - 8t^{11} + 25t^{10} - 44t^{9} + 40t^{8}\\
&\phantom{= 23} + 18t^{7} - 40t^{6} - 18t^{5} + 40t^{4} + 44t^{3} + 25t^{2} + 8t + 1),\\
Q_6(t) &= 3^{3}t^{2}(t^{2} - 3t - 1)^{2}(t^{4} - t^{3} + 5t^{2} + t + 1)^{3}(t^{8} + 235t^{7} + 1207t^{6} + 955t^{5} + 3840t^{4} - 955t^{3}\\
&\phantom{= 23} + 1207t^{2} - 235t + 1),\\
R_6(t) &= 23^{3}t^{3}(t^{2} - 3t - 1)^{3}(t^{2} + 1)(t^{4} - t^{3} + 5t^{2} + t + 1)^{4}(t^{12} - 512t^{11} - 13079t^{10} - 32300t^{9}\\
&\phantom{= 23} - 104792t^{8} - 111870t^{7} - 419368t^{6} + 111870t^{5} - 104792t^{4} + 32300t^{3} - 13079t^{2}\\
&\phantom{= 23} + 512t + 1).
\end{align*}

\subsection{Genus 1 groups}\label{subsection:genus1Table}
In this table, the parametrization of the modular curve comes as a Weierstrass equation for the genus 1 modular curve and below we give the $j$-map. 
\begin{center}
\renewcommand{\arraystretch}{1.5}
\begin{tabular}{| c | c | c | c |}\hline
\multicolumn{4}{| c |}{ \textbf{$(2,7)$-entanglements of type $\zZ{2}$}}   \\\hline
\textsc{Label} & \textsc{Generators} & \textsc{Model for $X_G$} & \textsc{Example} \\\hline\hline
\texttt{[GL2,7Ns]} &$\left\langle 
\left(\begin{smallmatrix} 3 & 2 \\ 5 & 11 \end{smallmatrix}\right),
\left(\begin{smallmatrix} 2 & 13 \\ 1 & 2 \end{smallmatrix}\right),
\left(\begin{smallmatrix} 0 & 9 \\ 5 & 0 \end{smallmatrix}\right),
\left(\begin{smallmatrix} 13 & 12 \\ 0 & 1 \end{smallmatrix}\right)
 \right\rangle$  &  $y^2 = x^3 - 4x^2 + 3x + 1$  &  \href{https://www.lmfdb.org/EllipticCurve/Q/361a1/}{\texttt{361a1}}\\
\texttt{[GL2,7Nn]} &$\left\langle 
\left(\begin{smallmatrix} 12 & 11 \\ 11 & 3 \end{smallmatrix}\right),
\left(\begin{smallmatrix} 0 & 9 \\ 9 & 13 \end{smallmatrix}\right),
\left(\begin{smallmatrix} 8 & 9 \\ 1 & 6 \end{smallmatrix}\right) 
 \right\rangle$  &  $y^2 = x^3 - 4x^2 + 3x + 1$  &  \href{https://www.lmfdb.org/EllipticCurve/Q/121b1/}{\texttt{121b1}} \\\hline
\end{tabular}
\end{center}
\vspace{7pt}
The $j$-map for the group with label \texttt{[GL2,7Ns]} is given by
\begin{align*}
j_s((x,y)) &= \frac{x(x+1)^3(x^2-5x+1)^3(x^2-5x+8)^3(x^4 - 5x^3 + 8x^2 - 7x + 7)^3}{(x^3 - 4x^2 + 3x + 1)^7},
\end{align*}
while the $j$-map for the group with label \texttt{[GL2,7Nn]} is given by
\begin{align*}
j_n((x,y)) &= \frac{ 8000 }{ 27 }\cdot\frac{x^3(x-2)^3(x-4/3)^3P(x)^3}{Q(x)^7},
\end{align*}
where
\vspace*{-.3em}
\begin{align*}
P(x) &= (x^6 - 20/3x^5 + 148/9x^4 - 181/9x^3 + 134/9x^2 - 56/9x + 14/9)(x^6 - 20/3x^5 + 148/9x^4 -\\
&\phantom{=  x^6 }  160/9x^3 + 64/9x^2 + 7/9)(x^6 - 20/3x^5 + 148/9x^4 - 842/45x^3 + 92/9x^2 - 112/45x - 7/45)\\
Q(x) &= x^9 - 10x^8 + 124/3x^7 - 2503/27x^6 + 1132/9x^5 - 2956/27x^4 + 551/9x^3 - 518/27x^2 + \\ 
&\phantom{=  x^6 } 56/27x + 7/27.
\end{align*}

\bibliography{refs}{}
\bibliographystyle{amsalpha}
\end{document}